\theoremstyle{plain}
\newtheorem{theorem}{Theorem}[section]
\newtheorem{lemma}[theorem]{Lemma}
\newtheorem{corollary}[theorem]{Corollary}
\newtheorem{assumption}[theorem]{Assumption}
\theoremstyle{definition}
\newtheorem{definition}[theorem]{Definition}
\newtheorem{remark}[theorem]{Remark}
\numberwithin{equation}{section}
\DeclareMathOperator{\proj}{Proj}
\newcommand{\sign}{\mathop{\rm sign}\nolimits}
\newcommand{\rest}{\left.\kern-2\nulldelimiterspace\right|_}
\newcommand{\norm}[2]{\left|#1\right|_{#2}}
\newcommand{\dnorm}[2]{\left\|#1\right\|_{#2}}
\newcommand{\zero}{{\mathbf0}}
\newcommand{\Id}{{\mathbf1}}
\newcommand{\ex}{\mathrm{e}}
\newcommand{\p}{\partial}
\newcommand{\deltafun}{\bm\updelta}
\newcommand*{\Bigcdot}{\raisebox{-.25ex}{\scalebox{1.25}{$\cdot$}}}
\newcommand{\clC}{{\mathcal C}}
\newcommand{\clF}{{\mathcal F}}
\newcommand{\clG}{{\mathcal G}}
\newcommand{\clI}{{\mathcal I}}
\newcommand{\clJ}{{\mathcal J}}
\newcommand{\clL}{{\mathcal L}}
\newcommand{\clU}{{\mathcal U}}
\newcommand{\clV}{{\mathcal V}}
\newcommand{\clX}{{\mathcal X}}
\newcommand{\bbN}{{\mathbb N}}
\newcommand{\bbR}{{\mathbb R}}
\newcommand{\bbT}{{\mathbb T}}
\newcommand{\fkK}{{\mathfrak K}}
\newcommand{\fkS}{{\mathfrak S}}
\newcommand{\fkY}{{\mathfrak Y}}
\newcommand{\rmD}{{\mathrm D}}
\newcommand{\rmY}{{\mathrm Y}}
\newcommand{\bfn}{{\mathbf n}}
\newcommand{\bfu}{{\mathbf u}}
\newcommand{\bfx}{{\mathbf x}}
\newcommand{\rmd}{{\mathrm d}}
\newcommand{\rme}{{\mathrm e}}
\newcommand{\rms}{{\mathrm s}}
\newcommand{\fkh}{{\mathfrak h}}
\newcommand{\fkv}{{\mathfrak v}}
\newcommand{\fkw}{{\mathfrak w}}
\newcommand{\ovlineC}[1]{\overline C_{\left[#1\right]}}
\definecolor{DarkBlue}{rgb}{0,0.08,0.45}
\definecolor{DarkRed}{rgb}{.65,0,0}
\definecolor{applegreen}{rgb}{0.55, 0.71, 0.0}
\definecolor{gray06}{rgb}{0.6, 0.6, 0.6}
\newcounter{mymac@matlab}
\newcommand{\matlab}{MATLAB%
   \ifnum\value{mymac@matlab}<1%
   \textregistered%
   \setcounter{mymac@matlab}{1}%
   \fi%
  }
\newcommand{\black}{ \color{black} }
\begin{document}
\title{Stabilizability of parabolic equations by switching controls based on point actuators}
\author{Behzad Azmi$^{\tt1}$}
\author{Karl Kunisch$^{\tt2,\tt3}$}
\author{S\'ergio S.~Rodrigues$^{\tt2}$}

 \thanks{
\vspace{-1em}\newline\noindent
{\sc MSC2020}: 93C05, 93C20, 93D20, 35Q93, 49M05
\newline\noindent
{\sc Keywords}: {stabilizability of parabolic equations,  switching control, relaxation metric, receding horizon control,  cardinality constraints}
\newline\noindent
$^{\tt1}$ Department of Mathematics and Statistics, University of Konstanz, D-78457 Konstanz, Germany \newline\noindent
 $^{\tt2}$  Johann Radon Institute for Computational and Applied Mathematics,
 \"OAW, 
  Altenbergerstrasse 69,
 4040 Linz, Austria.
    \newline\noindent
  $^{\tt3}$ Institute for Mathematics and Scientific Computing, University of Graz, Heinrichstrasse 36, 
  8010 Graz, Austria.
  \newline\noindent
  {\sc Emails}:
 {\small\tt behzad.azmi@uni-konstanz.de;\quad karl.kunisch@uni-graz.at;\\ \hspace*{3em} sergio.rodrigues@ricam.oeaw.ac.at}.
  \newline\noindent
}

\begin{abstract}
It is shown that a switching control involving  a finite number of Dirac delta actuators is able to steer the state of a general class of nonautonomous parabolic equations to zero as time increases to infinity.  The strategy  is based on a recent feedback stabilizability result,  which utilizes control forces given by linear combinations of appropriately located  Dirac delta distribution actuators.  Then, the existence of a stabilizing switching control with no more than one actuator active at each time instant is established.   For the implementation in practice,  the stabilization problem  is formulated as an infinite horizon optimal control problem, with cardinality-type control constraints enforcing the switching property.  Subsequently,  this  problem is tackled using a receding horizon framework.  Its suboptimality and stabilizabilizing properties are analyzed.  Numerical simulations validate the approach, illustrating its stabilizing and switching properties.
\end{abstract}

\maketitle

\pagestyle{myheadings} \thispagestyle{plain} \markboth{\sc B. Azmi, K. Kunisch, and S. S.
Rodrigues}{\sc Stabilization of parabolic equations by switching point controls}

\section{Introduction}
The stabilizability of a general class of parabolic-like equations by switching Diracs is investigated.  To set the stage,   we consider controlled parabolic equations of the form
\begin{subequations}\label{sys-y-axy}
 \begin{align}
 &\tfrac{\p}{\p t} y(t,x)-\nu\Delta y(t,x)+a(x,t)y(t,x)+b(x,t)\cdot\nabla y(t,x)=u(t)\deltafun_{c(t)},\\
 & y(0,x)=y_0(x),\qquad \clG y(t,x)\rest{\p\Omega}=0,
 \end{align}
 \end{subequations}
with state~$y(t,x)$ defined for~$x$ in an open convex, polygonal  domain~$\Omega\subset\bbR^d$, $d\in\{1,2,3\}$, and for time~$t>0$. The delta distribution~$\deltafun_{c(t)}$, supported at the point~$c(t)\in\Omega$,  represents a  switching actuator, where the mapping~$t\to c(t)\in\bfx\subset\Omega$ is piecewice constant,  and~$\bfx\coloneqq\{x^1,x^2,\dots, x^{M}\}$ is a finite set containing the~$M\in\bbN$ pairwise distinct supports~$x^j\in\Omega$ of the actuators in~$\{\deltafun_{x^j}\mid 1\le j\le M\}$.
The goal consists in establishing   the existence of a stabilizing  control~$(u(t),c(t))\in \bbR\times\bfx$ and also its numerical realization, where
 \begin{align}\label{goal-Intro-u}
(u(t),c(t))\in \bbR\times\bfx\quad\mbox{with}\quad u\in L^2((0,+\infty),\bbR)\quad\mbox{and}\quad\mbox{$c$ piecewise constant,}
 \end{align}
so that the solution of~\eqref{sys-y-axy}
satisfies, for some constants~$C\ge1$ and~$\mu>0$,
\begin{align}\label{goal-Intro-exp}
 &\norm{y(t,\Bigcdot)}{W^{-1,2}(\Omega)}\le C\ex^{-\mu t}\norm{y_0}{W^{-1,2}(\Omega)},\quad\mbox{for all}\quad t\ge0.
  \end{align}
We thus focus on the stabilizability by finitely many Dirac measures with only one  active at any given time (switching property).  
 The nonswitching case was treated in~\cite{KunRodWal24-cocv} where it was proven  that  for sufficiently many actuators, depending on ~$(\nu,a,b)$,  stabilizability holds  with an explicitly given input feedback control, with control forces taking values in the linear span of~$\deltafun_\bfx$.
In~\eqref{sys-y-axy}, the operator~$\clG$ defines the boundary conditions. Our results are valid for Dirichlet, Neumann, and periodic boundary conditions (bcs),
\begin{align}
\clG&=\clG_{\rm Dir}=\Id,&&\quad\mbox{for Dirichlet bcs}\notag\\
\clG&=\clG_{\rm Neu}=\bfn\cdot\nabla,&&\quad\mbox{for Neumann bcs}\notag\\
\clG&=\clG_{\rm Per}=\zero,&&\quad\mbox{for periodic bcs}\notag
\end{align}
where~$\Id$ stands for the identity operator, $\bfn=\bfn(\overline x)$ stands for the unit outward normal vector at the smooth  points~$\overline x\in\p\Omega$ of the boundary, and~$\clG=\clG_{\rm Per}$ simply means that the boundary conditions are inexistent, by considering  the evolution on the boundaryless torus~$\Omega=\bbT_L^d\sim\bigtimes_{n=1}^d[0,L_n)$;~$\p\bbT_L^d=\emptyset$.
We shall also assume that
\begin{subequations}\label{assum.ab.parab}
\begin{align}
 & a\in L^\infty((0,+\infty), \clC^1(\overline\Omega))\quad\mbox{and}\quad b\in L^\infty((0,+\infty), \clC^2(\overline\Omega))^d,
\end{align}
and further that
\begin{align}
b\cdot\bfn=0,\quad\mbox{if}\quad\clG=\clG_{\rm Neu}.
\end{align}
\end{subequations}
As usual, we shall consider~$H=L^2(\Omega)$ as a pivot space and introduce the following Sobolev subspace~$V$, depending on the boundary conditions,
\begin{subequations}\label{bcs.parab}
\begin{align}
V&=V_{\rm Dir}=\{h\in W^{1,2}(\Omega)\mid h\rest{\p\Omega}=0\},&&\quad\mbox{for Dirichlet bcs}\\
V&=V_{\rm Neu}=W^{1,2}(\Omega),&&\quad\mbox{for Neumann bcs}\\
V&=V_{\rm Per}=W^{1,2}(\bbT_L^d),&&\quad\mbox{for periodic bcs}
\end{align}
and denote the continuous dual of~$V$ by~$V'$. Hence $V\subset H=H'\subset V'$.
\end{subequations}

\subsection{Main stabilizability result}
We shall prove a stabilizability result for a class of abstract evolution equations, which we shall subsequently apply to the concrete system~\eqref{sys-y-axy} to obtain the following. 
\begin{theorem}\label{T:main-Intro-swi}
 Let~$\Omega\subset\bbR^d$, $d\in\{1,2,3\}$, be an open, convex, polygonal domain, let
 $(a,b)$ satisfy~\eqref{assum.ab.parab}, and let $\mu >0$. Then,
 there exist a constants  $C>0$, $C_u>0$,  and a finite set~$\bfx=\{x^i\mid 1\le i\le M\} \subset \Omega$, $M\in\bbN$, such that for each~$y_0\in V'$, there exists a control~$u\in L^2((0,+\infty),\bbR)$,  and a piecewise constant path $c(t)\in\bfx$
such that the solution of~\eqref{sys-y-axy}
 satisfies
 \begin{align}\label{goal.parab-Intro-exp}
 &\norm{ y(t,\Bigcdot)}{V'}\le C\ex^{-\mu t}\norm{y_0}{V'},\qquad\mbox{for all}\qquad t\ge0.
\end{align}
Furthermore, the mapping~$y_0\mapsto u(y_0)$ is bounded from~$V'$
into~$L^2((0,+\infty),\bbR)$,  and  we have  $\norm{u(y_0)}{L^2((0,+\infty),\bbR)}\le C_u\norm{y_0}{V'}$.
Finally, for fixed~$\nu>0$, the number of actuators depends only on an upper bound for the operator norm~$\norm{a\Id+b\cdot\nabla}{\clL(H,V')}$
\end{theorem}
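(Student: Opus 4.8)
The plan is to reduce the switching statement to the nonswitching feedback result by a relaxation (chattering) argument. First I would rewrite~\eqref{sys-y-axy} as an abstract nonautonomous Cauchy problem $\dot y=-A(t)y+B\bfu$, where $-A(t)$ generates the parabolic evolution family~$\clU(t,s)$ and $B\colon\bbR^M\to\linspan\deltafun_\bfx$ sends $(u_1,\dots,u_M)$ to $\sum_{j=1}^M u_j\deltafun_{x^j}$. Invoking~\cite{KunRodWal24-cocv} in its abstract form, but with a strict decay margin, I obtain a finite actuator set~$\bfx$ — whose cardinality~$M$ is controlled solely by an upper bound for $\norm{a\Id+b\cdot\nabla}{\clL(H,V')}$ once~$\nu$ is fixed — together with a stabilizing control $\bfu^*\in L^2((0,+\infty),\bbR^M)$ whose trajectory~$y^*$ satisfies $\norm{y^*(t)}{V'}\le C\ex^{-\mu't}\norm{y_0}{V'}$ for some $\mu'>\mu$, and $\norm{\bfu^*}{L^2}\le C\norm{y_0}{V'}$. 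At each instant the control force of~$y^*$ lies in~$\linspan\deltafun_\bfx$; the task is to reproduce its effect using forces that lie, at each instant, on a single line~$\bbR\deltafun_{x^{j(t)}}$.

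The key observation is that the solution operator is continuous in a relaxation metric that only sees the primitive of the control. Writing $U(t)=\int_0^t\bfu(s)\,ds$ and integrating by parts in the Duhamel formula, one gets $\int_0^t\clU(t,s)B\bfu(s)\,ds=BU(t)-\int_0^t\clU(t,s)A(s)B\,U(s)\,ds$, so two controls with primitives close in a suitable norm produce trajectories close in~$V'$. I would make this precise as a Lipschitz estimate $\norm{y_{\bfu^1}(t)-y_{\bfu^2}(t)}{V'}\le L\sup_{0\le s\le t}\norm{(U^1-U^2)(s)}{\clZ}$, where~$\clZ$ is the rough space in which the Dirac forces live and the term $A(s)B\,U(s)$ is given meaning through the smoothing of the parabolic evolution family; this is exactly the functional setting already in force for the nonswitching problem.

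Next I would build the switching control by chattering. On a partition of $(0,+\infty)$ into intervals $[t_k,t_{k+1}]$, I subdivide each into~$M$ consecutive slots, set $c(t)=x^j$ on the $j$-th slot, and choose the scalar amplitude so that the integrated force over $[t_k,t_{k+1}]$ matches that of~$\bfu^*$; concretely, on a slot of length $(t_{k+1}-t_k)/M$ I take $u=Mu_j^*$. This yields a piecewise constant path $c(t)\in\bfx$ with exactly one actuator active at each instant, whose primitive differs from~$U^*$ by at most $\mathcal{O}\bigl((t_{k+1}-t_k)\,M\,\sup|\bfu^*|\bigr)$ on each interval. Making the mesh fine enough, the relaxation estimate of the previous step gives $\norm{y(t)-y^*(t)}{V'}$ small and decaying, hence $\norm{y(t)}{V'}\le C\ex^{-\mu t}\norm{y_0}{V'}$ after absorbing the margin $\mu'-\mu$. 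The slot construction amplifies the $L^2$ norm by at most~$\sqrt M$, so $\norm{u}{L^2((0,+\infty),\bbR)}\le\sqrt M\,\norm{\bfu^*}{L^2}\le C_u\norm{y_0}{V'}$, and the linear dependence on~$y_0$ inherited from the nonswitching result keeps the map $y_0\mapsto u(y_0)$ bounded.

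The main obstacle is the relaxation estimate of the second step in the genuinely distributional setting: for $d\in\{2,3\}$ the actuators~$\deltafun_{x^j}$ do not belong to~$V'$, so $A(s)B\,U(s)$ must be interpreted via the smoothing of~$\clU(t,s)$ and controlled in~$V'$ uniformly in~$t$, while keeping the constant~$L$ independent of the chattering frequency. A secondary difficulty is the infinite-horizon bookkeeping: the per-interval errors must be summable and the piecewise constant path must remain locally finite (finitely many switches on each compact interval). I would secure both by exploiting the exponential decay of~$\bfu^*$, so that a fixed — or only mildly refining — mesh already drives the accumulated error below the available margin~$\mu'-\mu$.
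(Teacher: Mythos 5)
Your overall strategy --- start from the nonswitching result of \cite{KunRodWal24-cocv} and convert it into a switching control by continuity of the input-to-state map in a relaxation metric --- is indeed the mechanism the paper uses (via the four-step approximation of \cite{AzmiKunRod21}). But as written your plan has two genuine gaps. First, the obstacle you flag in your last paragraph is not a technicality to be deferred: for $d\ge2$ the term $B\bigl(U^1-U^2\bigr)(t)$ in your integrated-by-parts Duhamel formula lies only in $\rmD(A)'$, not in $V'$, and no smoothing of $\clU(t,s)$ acts on that instantaneous boundary term, so the Lipschitz estimate $\norm{y_{\bfu^1}(t)-y_{\bfu^2}(t)}{V'}\le L\sup_s\norm{(U^1-U^2)(s)}{\clZ}$ does not close in the $V'$-topology. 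The paper resolves this by passing to the regularized state $w=A^{-1}y$, so that the actuators become $A^{-1}\deltafun_{x^j}\in H$ and the entire relaxation argument runs in the $V$-norm for $w$ (equivalently the $V'$-norm for $y$); some such device is indispensable, not optional.

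Second, and more seriously, your error bookkeeping tracks a \emph{single global} reference trajectory $y^*$ over $(0,+\infty)$. The difference $y-y^*$ is propagated by the \emph{free} evolution family, which for the unstable operator $-\nu\Delta+a+b\cdot\nabla$ grows exponentially; hence the constant $L$ in your relaxation estimate grows like $\ex^{\omega t}$, and a per-interval primitive error $\epsilon_k\sim\ex^{-\mu' t_k}$ contributes $\ex^{\omega(t-t_k)}\epsilon_k$ at time $t$, whose sum over $k$ grows exponentially no matter how the mesh refines. Exploiting the decay of $\bfu^*$ cannot repair this. The paper avoids it structurally: the switching control on each window $I_k^T=(kT,kT+T)$ is rebuilt from the \emph{actual} state $w(kT)$ reached by the switching dynamics, the approximation error on that window is proportional to $\norm{w(kT)}{V}$ (giving the contraction factor $\tfrac{2+3\theta}{5}<1$ per window), and only then are the windows concatenated. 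Relatedly, this per-window construction requires the nonswitching control coefficients to be bounded in $L^\infty(I_k^T,\bbR^M)$ uniformly in $k$ and linearly in the entering state (Assumption~\ref{A:MstaticAct}\ref{A:MstaticAct-linfu}); since the feedback of \cite{KunRodWal24-cocv} only yields an $L^2$-in-time bound directly, the paper devotes Section~\ref{sS:satAMstaticAct} to upgrading it via an auxiliary LQ problem, parabolic smoothing of the adjoint state, and Datko's theorem --- your appeal to $\sup\lvert\bfu^*\rvert$ presupposes exactly this unproven bound.
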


\begin{remark}
The auxiliary (nonswitching) stabilizability result from~\cite{KunRodWal24-cocv} that we use, was applied  to rectangular/box domains in~\cite{KunRodWal24-cocv}, where it is also mentioned that it can be applied to some classes of convex domains; see~\cite[Rem.~5]{KunRodWal24-cocv}. In fact, it can be applied to general convex polygonal/polyhedral domains~$\Omega\subset\bbR^d$ as explained in~\cite[Rem.~2.8]{AzmiKunRod23}, using the fact that such domains are the union of a finite number of simplexes and each simplex can be successively partitioned into smaller rescaled simplexes while keeping the number of used congruent classes bounded; see~\cite[Sect.~3]{EdelsbrunnerGrayson00}  \cite[Thm.~4.1]{Bey00}.
\end{remark}

\begin{remark}
In the literature,  {\em stabilizability} often entails the property that~\eqref{goal.parab-Intro-exp} is achieved by controls  in feedback form~\cite[Sects.~I.2.5 and~IV.3.3]{Zabczyk92}. In this manuscript this is not the case, since it is  not required that the control input pair~$(u(t),c(t))$ is given as a function of the time-state pair~$(t,y(t))$. Following~\cite[Sect.~5.5]{Sontag98},  {\em stabilizability by means of nonfeedback controls} can be seen as  {\em asymptotic null controllability}.
\end{remark}

\subsection{Computational approach}
We also address the computation of a stabilizing switching control, which is a  nontrivial task. Our approach will be as follows.
We consider the infinite time-horizon optimization problem
\begin{equation}
\notag
\frac{1}{2} \int^{\infty}_{0} \left(   \norm{y(t)}{L^2(\Omega)}^2+\beta \norm{\mathbf{u}(t)}{\ell^2}^2\right)dt \quad\longrightarrow\quad {\rm infimum},
\end{equation}  
 with~$\bfu\in L^2((0,+\infty),\bbR^M)$ and where~$(y,\bfu)$ is subject  to~\eqref{sys-y-axy},
 \begin{align*}
&\tfrac{\p}{\p t} y-\nu\Delta y+ay+b\cdot\nabla y=\sum^{M}_{j =1} \bfu_j \deltafun_{x^j}, \qquad
 y(0)=y_0,\qquad \clG y\rest{\p\Omega}=0,
\intertext{and to  cardinality control constraints}
& \qquad   \norm{\bfu(t)}{0}\coloneqq\sum^{M}_{j =1} \sign(\norm{\bfu_j(t)}{\bbR})\le 1,
\end{align*}
where~$\sign$ denotes the sign function, thus~$\sign(0)\coloneqq 0$ and~$\sign(r)\coloneqq 1$ for $r>0$.  Hence,~$\norm{\bfu(t)}{0}\in\{0,1,\dots,M\}$ counts the number of nonzero components of~$\bfu(t)$. The details to deal with such nonsmooth  and nonconvex optimization problem shall be presented in Section~\ref{S:stabil_rhc}, where we shall also employ a receding horizon strategy to approximate the  infinite time-horizon problem. Above,~$\norm{\Bigcdot}{\ell^2}$ denotes the Euclidean norm in~$\bbR^M$,~$\norm{v}{\ell^2}\coloneqq(\sum_{j=1}^M v_j^2)^\frac12$.

\subsection{Related literature}
There are relatively few results in the literature on stabilizability  by means of switching controls, though it is an important problem for applications where  it is expensive to have several actuators active at the same time. Concerning switching controls with focus on controllability results    we can mention~\cite{Zuazua11},  where Fourier series representations of solutions are used to tackle, among other cases,  1D heat equation with  controls switching between two delta actuators.  We also  mention~\cite{Gugat08} where controllability results for 1D wave equations are investigated with switching boundary controls,  and \cite{LamGirPri15} which explores the exponential stabilizability of feedback switching boundary controls for a class of switched linear hyperbolic systems of conservation laws.   In \cite{HanGun10},    feedback switching controls are introduced  for a specific class of semilinear hyperbolic systems within an augmented bounded variation setting.
To verify the stabilizability by switching controls, here we follow in part appropriate variations of arguments as in~\cite{AzmiKunRod21}, where indicator functions of small open spatial subdomains, instead of delta distributions, were taken as actuators, and where the continuity of solutions as the control varies in a so-called relaxation metric~\cite{Gamk78} is used.

Concerning numerical computations, the followed receding horizon framework, also known as model predictive control, has garnered significant attention from researchers, due to its flexibility, namely, in tackling high-dimensional problems with complex structures~\cite{VeldZua22, VeldBorZua24, Gru09},  as well as control and state constraints~\cite{GruMecPirVol21,AzmiKunRod21,AzmiKunRod23}.

Achieving the desired switching property is often enhanced/pursued either through continuous optimization algorithms incorporating (squared) $\ell_1$-type regularization \cite{ClaRunKun16,ClaRunKun17, AzmiKunisch19}   or via relaxation techniques for mixed-integer optimization problems~\cite{HanSag13}.  Here,  we utilize a continuous optimization algorithm  incorporating a gradient projection method inspired by~\cite{Wachs24,BecEld13}; this  approach ensures that no more that one actuator is active at each time instant.

\subsection{Notation}
Given Hilbert spaces~$X$ and~$Y$, the space of continuous linear mappings from~$X$ into~$Y$ is denoted by~$\clL(X,Y)$, and for ~$X=Y$ we
write~$\clL(X)\coloneqq\clL(X,X)$.
The continuous dual of~$X$ is denoted~$X'\coloneqq\clL(X,\bbR)$. The adjoint of an operator $L\in\clL(X,Y)$ will be denoted $L^*\in\clL(Y',X')$.
If the inclusion
$X\subseteq Y$ is continuous, we write $X\xhookrightarrow{} Y$. We write
$X\xhookrightarrow{\rm d} Y$, respectively $X\xhookrightarrow{\rm c} Y$, if the inclusion is also dense, respectively compact.

We write~$\bbR$ and~$\bbN$ for the sets of real numbers and nonnegative
integers, respectively, and  we denote by $\bbR_+\coloneqq(0,+\infty)$ and~$\bbN_+\coloneqq\mathbb N\setminus\{0\}$, their subsets with positive numbers.

For an open interval~$I\subset\bbR$, we denote~$W(I,X,Y)\coloneqq\{f\in L^2(I,X)\mid \dot f\in L^2(I,Y)\}$, endowed with the norm~$\norm{f}{W(I,X,Y)}\coloneqq \norm{(f,\dot f)}{L^2(I,X)\times L^2(I,Y)}$.

The space of continuous functions from~$X$ into~$Y$ is denoted by~$\clC(X,Y)$.
The unit sphere in~$X$ shall be denoted by
$\fkS_X\coloneqq\{h\in X\mid \norm{h}{X}= 1\},$
where~$\norm{\Bigcdot}{X}$ denotes the norm on~$X$, associated with the scalar product~$(\Bigcdot,\Bigcdot)_X$.

By~$\overline C_{\left[a_1,\dots,a_n\right]}$ we denote a nonnegative function that
increases in each of its nonnegative arguments~$a_i\ge0$, $1\le i\le n$.
Finally, $C,\,C_i$, $i=0,\,1,\,\dots$, stand for unessential positive constants, which may take different values at different places in the manuscript.

\subsection{Contents}
To analyze  \eqref{sys-y-axy}, it will be considered as a special case of the abstract  parabolic-like equation
\begin{align}\label{sys-y-Intro}
 \dot y+Ay+A_{\rm rc}(t)y=u(t)\Phi(t),\qquad y(0)=y_0,\qquad t>0.
\end{align}
This allows to simplify parts of the exposition. It also facilitates   writing key properties as abstract assumptions, which makes the results potentially applicable to other concrete systems.
In Section~\ref{S:assumptions} we present  the assumptions that are required  on  the operators~$A$ and~$A_{\rm rc}$ above.
The main abstract exponential stabilizability result is proven in Section~\ref{S:switching}. This result is applied to concrete parabolic equations as~\eqref{sys-y-axy} in Section~\ref{S:proofT:main-Intro}, leading to the proof of Theorem~\ref{T:main-Intro-swi}.   Building upon the exponential stabilizability result of  Theorem~\ref{T:main-Intro-swi},  in Section~\ref{S:stabil_rhc} stabilizability of the receding horizon framework is investigated. Results of simulations are presented in Section~\ref{S:num_impl}, showing the algorithmic behavior of the proposed  methodologies and the stabilizing performance of the switching receding horizon control. Finally, Section~\ref{S:finalcomm} gathers concluding remarks on the achieved results and on interesting subjects for potential follow-up studies.

\section{Assumptions}\label{S:assumptions}
The result will be achieved under general assumptions on the
operators~$A$ and~$A_{\rm rc}$ defining the free dynamics, and on a particular stabilizability
assumption of~\eqref{sys-y-Intro} by means of controls
based on linear combinations of a large enough finite number~$M$ of  Dirac delta actuators.
Hereafter, all Hilbert spaces are assumed real and separable.

Let us be given two Hilbert spaces~$V$ and~$H$, with~$V\subset H=H'$.\black
\begin{assumption}\label{A:A0sp}
 $A\in\clL(V,V')$ is symmetric and $(y,z)\mapsto\langle Ay,z\rangle_{V',V}$ is a complete scalar product on~$V.$
\end{assumption}

Hereafter, we suppose that~$V$ is endowed with the scalar product~$(y,z)_V\coloneqq\langle Ay,z\rangle_{V',V}$,
which still makes~$V$ a Hilbert space.
Necessarily, ~$A\colon V\to V'$ is an isometry.
\begin{assumption}\label{A:A0cdc}
The inclusion $V\subseteq H$ is dense, continuous, and compact.
\end{assumption}

Necessarily, we have that
\begin{equation}\notag
 \langle y,z\rangle_{V',V}=(y,z)_{H},\quad\mbox{for all }(y,z)\in H\times V,
\end{equation}
and also that the operator $A$ is densely defined in~$H$, with domain $\rmD(A)$ satisfying
\begin{equation}\notag
\rmD(A)\xhookrightarrow{\rm d,\,c} V\xhookrightarrow{\rm d,\,c} H\xhookrightarrow{\rm d,\,c} V'\xhookrightarrow{\rm d,\,c}\rmD(A)'.
\end{equation}
Further,~$A$ has a compact inverse~$A^{-1}\colon H\to H$, and we can find a nondecreasing
system of (repeated accordingly to their multiplicity) eigenvalues $(\alpha_n)_{n\in\bbN_+}$ and a corresponding complete basis of
eigenfunctions $(e_n)_{n\in\bbN_+}$:
\begin{equation}\label{eigfeigv}
0<\alpha_1\le\alpha_2\le\dots\le\alpha_n\le\alpha_{n+1}\to+\infty \quad\mbox{and}\quad Ae_n=\alpha_n e_n.
\end{equation}

We can define, for every $\zeta\in\bbR$, the fractional powers~$A^\zeta$, of $A$, by
\begin{equation}\notag
 y=\sum_{n=1}^{+\infty}y_ne_n,\quad A^\zeta y=A^\zeta \sum_{n=1}^{+\infty}y_ne_n\coloneqq\sum_{n=1}^{+\infty}\alpha_n^\zeta y_n e_n,
\end{equation}
and the corresponding domains~$\rmD(A^{|\zeta|})\coloneqq\{y\in H\mid A^{|\zeta|} y\in H\}$, and
$\rmD(A^{-|\zeta|})\coloneqq \rmD(A^{|\zeta|})'$.
We have that~$\rmD(A^{\zeta})\xhookrightarrow{\rm d,\,c}\rmD(A^{\zeta_1})$, for all $\zeta>\zeta_1$,
and we can see that~$\rmD(A^{0})=H$, $\rmD(A^{1})=\rmD(A)$, $\rmD(A^{\frac{1}{2}})=V$.

For the time-dependent operator we assume the following:
\begin{assumption}\label{A:A1}
For almost every~$t>0$ we have~$A_{\rm rc} \in\clL(H,V')$,
with a uniform bound as~$\norm{A_{\rm rc}}{L^\infty(\bbR_+,\clL(H,V'))}\eqqcolon C_{\rm rc}<+\infty.$
\end{assumption}

For given~$T\in\bbR_+$ and~$k\in\bbN_+$, we denote the time interval
\begin{equation}\label{IkT}
I_k^T\coloneqq (kT,kT+T).
\end{equation}
We will also need the following norm squeezing property, by means of controls based on linear combinations of~$M$ given actuators. 

\begin{assumption}\label{A:MstaticAct} 
There exist:
\begin{enumerate}[label={\rm(\alph*)}]
 \item a positive integer~$M$; positive real numbers~$T>0$ and~$\theta\in(0,1)$;
 \item\label{A:MstaticAct-norm} a linearly independent family~$\{\Phi_j\mid j\in\{1,2,\dots,M\}\}\subset\fkS_{\rmD(A)'}$; and
\item\label{A:MstaticAct-linfu} a family of
 functions~$\{v_k\in\clL(V',L^\infty(I_k^T,\bbR^M))\mid k\in\bbN\}$, satisfying the inequality
 $\sup\limits_{k\in\bbN}\norm{v_k}{\clL(V',L^\infty(I_k^T,\bbR^M))}\le\fkK$,
 \end{enumerate}
 such that:  for all $k\in\bbN$ the solution of
 \begin{align}\label{sys-y-static}
 \dot y+Ay+A_{\rm rc}(t) y=\textstyle\sum\limits_{j=1}^M(v_{k,j}(\fkv))(t)\Phi_j,\qquad w(kT)=\fkv,
\end{align}
with~$t\in I_k^T$, satisfies
\begin{align}\label{goal-static}
 &\norm{y(kT+T)}{V'}\le \theta\norm{\fkv}{V'},\qquad\mbox{for all}\quad\fkv\in V'.
 \end{align}
\end{assumption}

\begin{remark}\label{R:satAssum1}
Assumptions~\ref{A:A0sp}--\ref{A:A1} are satisfied for a general class of parabolic equations as~\eqref{sys-y-axy}. The satisfiability of Assumption~\ref{A:MstaticAct} is nontrivial, and shall be proven for equations~\eqref{sys-y-axy} evolving in convex polygonal/polyhedral domains~$\Omega\subset\bbR^d$, by combining a result from~\cite{KunRodWal24-cocv} together with tools from optimal control.
\end{remark}

\begin{remark}\label{R:muTtheta}
The requirements within Assumption~\ref{A:MstaticAct} will be sufficient to conclude the desired stabilizability with exponential rate~$\mu_{T,\theta}\coloneqq\frac1T\log(\tfrac{5}{2+3\theta})>0$, with switching controls (cf.~Thm.~\ref{T:main-switch-asy}). To achieve the complete stabilizability (i.e., with~$\mu$ fixed apriori) as stated in Theorem~\ref{T:main-Intro-swi} we shall need an extra argument (cf.~Sect.~\ref{sS:satAMstaticAct}), where we shall use the (not necessarily switching) complete stabilizability result from~\cite{KunRodWal24-cocv}.
\end{remark}

\section{Stabilizability with a switching control}\label{S:switching}
Recall that we want to show stabilizability by means of switching controls, where at each time instant, only one of the actuators~$\Phi_j\in\rmD(A^{-1})$ is active.

It is convenient to work with more ``regular''/``standard'' solutions which we will have if the actuators were in the pivot space~$H$. For this reason, we borrow the idea from~\cite[Proof of Thm~3.1]{KunRodWal24-cocv} and consider the ``regularized'' state
$w\coloneqq A^{-1}y$. In other words, in this section, instead of looking for a switching control stabilizing~\eqref{sys-y-Intro} in the~$V'$-norm,
\begin{align}\label{sys-y}
&\dot y+Ay+A_{\rm rc}(t)y=u(t)\Phi(t),&& y(0)=y_0\in V',
\end{align}
we shall be looking for a switching control stabilizing~\eqref{sys-w} in the~$V$-norm,
\begin{align}\label{sys-w}
&\dot w+Aw+A_{\rm rc}^A(t)w=u(t)A^{-1}\Phi(t),&& w(0)=w_0\in V,\\
&\mbox{where}\quad
A_{\rm rc}^A(t)\coloneqq A^{-1}A_{\rm rc}(t)A.\notag
\end{align}
Clearly, this two problems are equivalent, but now the ``actuators''~$A^{-1}\Phi_j\in H$ are in the pivot space and we can use more standard arguments concerning the solutions.

\subsection{Auxiliary results}\label{sS:auxil-switch}
We shall use some auxiliary results, which we state in this section.

\begin{lemma}\label{L:ArcA}
Let~$A_{\rm rc}$ and~$C_{\rm rc}$ be as in Assumption~\ref{A:A1}. Then, the operator~$A_{\rm rc}^A(t)\coloneqq A^{-1}A_{\rm rc}(t)A$ satisfies~$\norm{A_{\rm rc}^A}{L^\infty(\bbR_+,\clL(\rmD(A),V))}\le C_{\rm rc}$.
\end{lemma}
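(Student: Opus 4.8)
The plan is to track norms through the three-factor composition $A_{\rm rc}^A(t)=A^{-1}A_{\rm rc}(t)A$, exploiting that $A$ acts isometrically between consecutive spaces of the fractional-power scale. Recall from the construction following Assumptions~\ref{A:A0sp}--\ref{A:A0cdc} that each space~$\rmD(A^\zeta)$ is endowed with the norm~$\norm{y}{\rmD(A^\zeta)}=\norm{A^\zeta y}{H}$, so that for every~$\zeta\in\bbR$ the operator~$A$ maps~$\rmD(A^\zeta)$ isometrically onto~$\rmD(A^{\zeta-1})$. In particular, taking~$\zeta=1$ and~$\zeta=\tfrac12$, and using the identifications~$\rmD(A^{\frac12})=V$, $\rmD(A^{-\frac12})=V'$, $\rmD(A^0)=H$, we obtain that~$A\colon\rmD(A)\to H$ and~$A^{-1}\colon V'\to V$ are both isometric.

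With these mapping properties in hand, the estimate reduces to bookkeeping. I would fix~$w\in\rmD(A)$ and a time~$t$ at which Assumption~\ref{A:A1} holds. Applying~$A$ gives~$Aw\in H$ with~$\norm{Aw}{H}=\norm{w}{\rmD(A)}$; then Assumption~\ref{A:A1} yields~$A_{\rm rc}(t)Aw\in V'$ with the bound~$\norm{A_{\rm rc}(t)Aw}{V'}\le C_{\rm rc}\norm{Aw}{H}$; and finally applying the isometry~$A^{-1}\colon V'\to V$ converts this~$V'$-bound into a~$V$-bound. Concatenating,
\begin{align*}
\norm{A_{\rm rc}^A(t)w}{V}
&=\norm{A^{-1}A_{\rm rc}(t)Aw}{V}
=\norm{A_{\rm rc}(t)Aw}{V'}\\
&\le C_{\rm rc}\norm{Aw}{H}
=C_{\rm rc}\norm{w}{\rmD(A)}.
\end{align*}
Since this holds for every~$w\in\rmD(A)$, it gives~$\norm{A_{\rm rc}^A(t)}{\clL(\rmD(A),V)}\le C_{\rm rc}$ for almost every~$t$; taking the essential supremum over~$t\in\bbR_+$ yields the claim. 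Measurability of~$t\mapsto A_{\rm rc}^A(t)$ (needed to make sense of the~$L^\infty$-norm) is inherited from that of~$A_{\rm rc}$, since composition with the fixed bounded operators~$A$ and~$A^{-1}$ preserves it.

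I do not anticipate a genuine obstacle: the entire content is the correct identification of the isometries~$A\colon\rmD(A)\to H$ and~$A^{-1}\colon V'\to V$ on the Hilbert scale, and the uniform bound from Assumption~\ref{A:A1}. The only point meriting a moment's care is ensuring the domains and codomains in the chain match up—in particular that~$A_{\rm rc}(t)$ is applied to~$Aw\in H$ (where Assumption~\ref{A:A1} furnishes the~$\clL(H,V')$ bound) and that~$A^{-1}$ is then applied to an element of~$V'$ (where it is isometric onto~$V$); no further analysis is required.
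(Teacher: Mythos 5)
Your proof is correct and is essentially identical to the paper's one-line argument: both use the isometry $\norm{A^{-1}g}{V}=\norm{g}{V'}$, the bound $\norm{A_{\rm rc}(t)Ah}{V'}\le C_{\rm rc}\norm{Ah}{H}$ from Assumption~\ref{A:A1}, and the identity $\norm{Ah}{H}=\norm{h}{\rmD(A)}$. The added remarks on measurability are fine but not something the paper spells out.
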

\begin{proof}
We have~$\norm{A_{\rm rc}^A(t)h}{V}=\norm{A_{\rm rc}(t)Ah}{V'}\le C_{\rm rc}\norm{A^{1}h}{H}=C_{\rm rc}\norm{h}{\rmD(A)}$.
\end{proof}

Let us  take a more general forcing~$f$ in place of the control, and consider the  system
\begin{align}\label{sys-w-fIk}
 &\dot w+Aw+A_{\rm rc}^A w=f,\qquad w(kT)=\fkw,\qquad t\in I_k^T,
 \intertext{in the time interval~$I_k^T= (kT,kT+T)$. The solution of~\eqref{sys-w-fIk} will be denoted by}
 &\fkY_k(\fkw,f)(t)\coloneqq w(t).\notag
\end{align}

\begin{lemma}\label{L:strg-sol-w}
Given~$k\in\bbN$, $T>0$, and~$(\fkw,f)\in V\times L^2(I_k^T,H)$, there exists one, and only one, solution~$w\in W(I_k^T,\rmD(A),V)$ for system~\eqref{sys-w-fIk}. Moreover, for a constant~$D_Y>0$, independent of~$k\in\bbN$, there holds
\begin{equation}\notag
\norm{w(t)}{V}^2\le D_Y\left(\norm{\fkw}{V}^2+\norm{f}{L^2(I_k^T,H)}^2\right),\quad t\in I_k^T.
\end{equation}
\end{lemma}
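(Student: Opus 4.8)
The plan is to prove well-posedness and the energy bound simultaneously through a Galerkin approximation in the eigenbasis $(e_n)_{n\in\bbN_+}$ of~$A$, deriving the a priori estimate at the $\rmD(A)$/$V$ level and then passing to the limit; uniqueness will follow from linearity and the same estimate. To keep track of the $k$-independence of the constant, I first shift time by~$s\coloneqq t-kT\in(0,T)$, so that every estimate is carried out on the fixed interval~$(0,T)$, using the $k$-uniform bound $\norm{A_{\rm rc}^A}{L^\infty(\bbR_+,\clL(\rmD(A),V))}\le C_{\rm rc}$ from Lemma~\ref{L:ArcA}. Let $P_N$ denote the orthogonal projection of~$H$ onto $\linspan\{e_1,\dots,e_N\}$ and let $w_N(s)\coloneqq\sum_{n=1}^N g_n(s)e_n$ solve the projected system with $w_N(0)=P_N\fkw$; since the eigenfunctions are orthogonal in both $H$ and~$V$, we have $\norm{P_N\fkw}{V}\le\norm{\fkw}{V}$, and the projected problem is a linear ODE with $L^2$-in-time coefficients, so $w_N$ exists and is unique on $[0,T]$.

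Testing the projected equation with $Aw_N$ and using $(\dot w_N,Aw_N)_H=\tfrac12\tfrac{\ed}{\ed s}\norm{w_N}{V}^2$ and $(Aw_N,Aw_N)_H=\norm{Aw_N}{H}^2$ yields
\[
\tfrac12\tfrac{\ed}{\ed s}\norm{w_N}{V}^2+\norm{Aw_N}{H}^2=(f,Aw_N)_H-(A_{\rm rc}^A w_N,Aw_N)_H.
\]
The decisive step is the control of the last term. Exploiting the self-adjointness of~$A$, I write $(A_{\rm rc}^A w_N,Aw_N)_H=(A^{\frac12}A_{\rm rc}^A w_N,A^{\frac12}w_N)_H$, so that Lemma~\ref{L:ArcA} gives
\[
\bigl|(A_{\rm rc}^A w_N,Aw_N)_H\bigr|\le\norm{A_{\rm rc}^A w_N}{V}\,\norm{w_N}{V}\le C_{\rm rc}\norm{Aw_N}{H}\,\norm{w_N}{V}.
\]
Together with $\bigl|(f,Aw_N)_H\bigr|\le\norm{f}{H}\norm{Aw_N}{H}$ and Young's inequality, this absorbs one half of $\norm{Aw_N}{H}^2$ into the left-hand side and produces
\[
\tfrac{\ed}{\ed s}\norm{w_N}{V}^2+\norm{Aw_N}{H}^2\le 2C_{\rm rc}^2\norm{w_N}{V}^2+2\norm{f}{H}^2.
\]
Gr\"onwall's inequality on $(0,T)$ then bounds $\sup_{s}\norm{w_N(s)}{V}^2$ and, after integration, $\int_0^T\norm{Aw_N}{H}^2\,\ed s$ by $\ovlineC{C_{\rm rc},T}\bigl(\norm{\fkw}{V}^2+\norm{f}{L^2(I_k^T,H)}^2\bigr)$, with a constant depending only on $C_{\rm rc}$ and~$T$, hence independent of both $N$ and~$k$.

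These uniform bounds make $(w_N)$ bounded in $L^2(I_k^T,\rmD(A))\cap L^\infty(I_k^T,V)$ and, reading $\dot w_N=P_N(f-Aw_N-A_{\rm rc}^A w_N)$ off the equation, also make $(\dot w_N)$ bounded in $L^2(I_k^T,H)$. Weak and weak-$*$ compactness furnish a subsequence converging to a limit~$w$ which is readily verified to solve~\eqref{sys-w-fIk} and to satisfy $w\in L^2(I_k^T,\rmD(A))$ with $\dot w\in L^2(I_k^T,H)$, i.e.\ the strong-solution regularity of~\eqref{sys-w-fIk}; the interpolation (Lions--Magenes) embedding $\{v\mid v\in L^2(I_k^T,\rmD(A)),\ \dot v\in L^2(I_k^T,H)\}\hookrightarrow\clC(\overline{I_k^T},V)$ then delivers $w\in\clC(\overline{I_k^T},V)$ and, by lower semicontinuity of the norms under weak limits, the pointwise estimate with $D_Y\coloneqq\ovlineC{C_{\rm rc},T}$. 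Uniqueness is immediate: the difference of two solutions solves~\eqref{sys-w-fIk} with data $(\fkw,f)=(0,0)$, and the energy estimate above forces it to vanish. The main obstacle throughout is exactly the perturbation term: since $A_{\rm rc}^A$ is only controlled from $\rmD(A)$ into~$V$ and not into~$H$ with small relative bound, a plain Cauchy--Schwarz in~$H$ would leave an unabsorbable multiple of $\norm{Aw_N}{H}^2$; splitting $A=A^{\frac12}A^{\frac12}$ across the two factors is what renders the top-order term absorbable irrespective of the size of~$C_{\rm rc}$.
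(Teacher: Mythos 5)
Your proposal is correct and follows essentially the same route as the paper: a Galerkin scheme in the eigenbasis, testing with $Aw$, splitting $(A_{\rm rc}^A w,Aw)_H=(A^{\frac12}A_{\rm rc}^A w,A^{\frac12}w)_H$ so that Lemma~\ref{L:ArcA} bounds it by $C_{\rm rc}\norm{w}{\rmD(A)}\norm{w}{V}$, absorbing the top-order term by Young, and applying Gr\"onwall to get the same $k$-independent constant (the paper's $D_Y=2\rme^{2C_{\rm rc}^2T}$), with uniqueness from linearity. You merely spell out the projection, the weak-limit passage, and the $\clC(\overline{I_k^T},V)$ embedding in more detail than the paper does.
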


\begin{proof}
The proof follows by slight variations of standard arguments. Since, it is often given for a different class of  operators when compared to that in Lemma~\ref{L:ArcA}, namely, for~$A_{\rm rc}^A\in L^\infty(\bbR_+,\clL(V,H))$ and since we need to recall some of these arguments later in Section~\ref{sS:satAMstaticAct}, we provide a few details here.
The following apriori-like estimates also hold for standard Galerkin approximations of the solution based on spaces spanned by a finite number of eigenfunctions~$e_i\in\rmD(A)$ of~$A$, from which we can then derive the existence of the solution as a weak limit of such approximations.

Multiplying the dynamics in~\eqref{sys-w-fIk} by~$2Aw$, and using Lemma~\ref{L:ArcA}, we find
\begin{align}
\tfrac{\rmd}{\rmd t}\norm{w}{V}^2&= -2\norm{w}{\rmD(A)}^2-2(A_{\rm rc}^A w,Aw)_H+2(f,Aw)_H\notag\\
&\le -2\norm{w}{\rmD(A)}^2+2C_{\rm rc}\norm{w}{\rmD(A)}\norm{w}{V}+2\norm{f}{H}\norm{w}{\rmD(A)}\notag\\
&\le -\norm{w}{\rmD(A)}^2+2C_{\rm rc}^2\norm{w}{V}^2+2\norm{f}{H}^2.\label{strg-sol-dw}
\end{align}
Hence we can take~$D_Y=2\rme^{2C_{\rm rc}^2T}$.
Further, time integration of~\eqref{strg-sol-dw}  yields
\begin{align}
\label{estimate_BA}
\norm{w(kT+T)}{V}^2+\norm{w}{L^2(I_k^T,\rmD(A))}^2\le \norm{w(kT)}{V}^2+2C_{\rm rc}^2\norm{w}{L^2(I_k^T,V)}^2+2\norm{f}{L^2(I_k^T,H)}^2,
\end{align}
which implies~$w\in L^2(I_k^T,\rmD(A))$. Then, from the dynamics~\eqref{sys-w-fIk}, we find that
$\dot w\in L^2(I_k^T,H)$, hence,~$w\in W(I_k^T,\rmD(A),H)$.
Finally, the uniqueness of the solution follows from the linearity of the dynamics with~$(\fkw,f)=(0,0)$.
\end{proof}

From~\eqref{sys-y-static}, by denoting~$w\coloneqq Ay$, we arrive at the dynamical system
\begin{equation}\label{sys-w-Phi}
 \begin{split}
 &\dot w+Aw+A_{\rm rc}^A w=\textstyle\sum\limits_{j=1}^M(\widehat v_{k,j}(\fkw))A^{-1}\Phi_j,\qquad w(kT)=\fkw,\qquad t\in I_k^T,\\
 \mbox{with}\quad&\fkw=A^{-1}\fkv,\qquad\widehat v_k(\fkw)\coloneqq v_k(A\fkw),\qquad\norm{A^{-1}\Phi_j}{H}=1.
 \end{split}
\end{equation}

\begin{corollary}\label{C:MstaticAct}
Let~$M$, $T>0$,~$\theta\in(0,1)$, $\Phi_j$, $v_k$, and~$\fkK$, be as in Assumption~\ref{A:MstaticAct}. Then there hold:
\begin{itemize}
 \item the family~$\{A^{-1}\Phi_j\mid j\in\{1,2,\dots,M\}\}\subset\fkS_{H}$ is linearly independent,
\item the family~$\{\widehat v_k\in\clL(V,L^\infty(I_k^T,\bbR^M))\mid k\in\bbN\}$,  satisfies $\sup\limits_{k\in\bbN}\norm{\widehat v_k}{\clL(V,L^\infty(I_k^T,\bbR^M))}\le\fkK$,
 \end{itemize}
Further, for all $k\in\bbN$ the solution of~\eqref{sys-w-Phi}
satisfies
\begin{align*}
 &\norm{w(kT+T)}{V}\le \theta\norm{\fkw}{V},\qquad\mbox{for all}\quad\fkw\in V.
 \end{align*}
\end{corollary}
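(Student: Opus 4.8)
The plan is to read Corollary~\ref{C:MstaticAct} as merely the restatement of Assumption~\ref{A:MstaticAct} under the isometric change of variables $w\coloneqq A^{-1}y$ introduced at the start of Section~\ref{S:switching}, and to transport each of the three conclusions across it. The only tools needed are that the fractional powers act isometrically between the scales: since $\norm{h}{\rmD(A^{-1})}=\norm{A^{-1}h}{H}$, and since $\rmD(A^{-1})=\rmD(A)'$ while $\rmD(A^{-1/2})=\rmD(A^{1/2})'=V'$, both $A\colon V\to V'$ and $A^{-1}\colon\rmD(A)'\to H$ are isometric isomorphisms, as is $A^{-1}\colon V'\to V$ (indeed $\norm{A^{-1}\psi}{V}=\norm{A^{1/2}A^{-1}\psi}{H}=\norm{A^{-1/2}\psi}{H}=\norm{\psi}{V'}$).

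For the first item I would use that $A^{-1}$ restricts to an isometric isomorphism from $\rmD(A)'=\rmD(A^{-1})$ onto $H=\rmD(A^0)$. It therefore maps the unit sphere $\fkS_{\rmD(A)'}$ into $\fkS_H$, which gives the normalization $\norm{A^{-1}\Phi_j}{H}=1$ already recorded in~\eqref{sys-w-Phi}; and being injective it carries the linearly independent family $\{\Phi_j\}$ to the linearly independent family $\{A^{-1}\Phi_j\}$.

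For the second item I would start from the definition $\widehat v_k(\fkw)=v_k(A\fkw)$, i.e.\ $\widehat v_k=v_k\circ A$. Because $A\colon V\to V'$ is an isometry, $\norm{A\fkw}{V'}=\norm{\fkw}{V}$, so composition with $A$ does not increase the operator norm: $\norm{\widehat v_k}{\clL(V,L^\infty(I_k^T,\bbR^M))}\le\norm{v_k}{\clL(V',L^\infty(I_k^T,\bbR^M))}\le\fkK$, uniformly in $k\in\bbN$. For the decay estimate I would fix $\fkw\in V$, set $\fkv\coloneqq A\fkw\in V'$, and let $y$ solve~\eqref{sys-y-static} with initial datum $\fkv$ and control $v_k(\fkv)=v_k(A\fkw)=\widehat v_k(\fkw)$. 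Applying $A^{-1}$ to the $y$-dynamics exactly as in the passage from~\eqref{sys-y} to~\eqref{sys-w} (where $A_{\rm rc}^A=A^{-1}A_{\rm rc}A$), and noting that the initial datum transforms as $A^{-1}\fkv=\fkw$, shows that $w\coloneqq A^{-1}y$ solves~\eqref{sys-w-Phi}. Since $A^{-1}\colon V'\to V$ is an isometry, $\norm{w(kT+T)}{V}=\norm{y(kT+T)}{V'}\le\theta\norm{\fkv}{V'}=\theta\norm{\fkw}{V}$, which is the claim.

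The computations are routine; the only point requiring care is justifying that this formal $A^{-1}$-conjugation is valid at the level of genuine solutions and that it actually upgrades regularity. This is exactly where Lemma~\ref{L:strg-sol-w} enters: because $A^{-1}\Phi_j\in H$ by the first item, the forcing $\sum_j\widehat v_{k,j}(\fkw)A^{-1}\Phi_j$ lies in $L^\infty(I_k^T,H)\subset L^2(I_k^T,H)$ and $\fkw\in V$, so~\eqref{sys-w-Phi} admits a unique strong solution in $W(I_k^T,\rmD(A),V)$ (using Lemma~\ref{L:ArcA} for the boundedness of $A_{\rm rc}^A$). This both makes the $V$-norm at time $kT+T$ meaningful and, by the uniqueness assertion of Lemma~\ref{L:strg-sol-w}, identifies that solution with $A^{-1}y$, closing the argument.
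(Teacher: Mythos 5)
Your proposal is correct and follows essentially the same route as the paper: the corollary is obtained by transporting Assumption~\ref{A:MstaticAct} across the isometries $A\colon V\to V'$ and $A^{-1}\colon\rmD(A)'\to H$, exactly as in the paper's (much terser) proof, which reduces each item to the identities $\norm{\widehat v_k(\fkw)(t)}{\ell^2}=\norm{v_k(A\fkw)(t)}{\ell^2}\le\fkK\norm{A\fkw}{V'}=\fkK\norm{\fkw}{V}$ and $\norm{w(kT+T)}{V}=\norm{Aw(kT+T)}{V'}\le\theta\norm{\fkv}{V'}=\theta\norm{\fkw}{V}$. Your additional appeal to Lemma~\ref{L:strg-sol-w} to identify the transformed solution rigorously is a harmless (and sensible) elaboration of a point the paper leaves implicit.
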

\begin{proof}
The vectors~$A^{-1}\Phi_j$ are linearly independent, due to the linear independence of the~$\Phi_j$.
To conclude the proof, we observe that~$\norm{\widehat v_k(\fkw)(t)}{\ell^2}=\norm{v_k(A\fkw)(t)}{\ell^2}\le\fkK\norm{A\fkw}{V'}=\fkK\norm{\fkw}{V}$ and~$\norm{w(kT+T)}{V}=\norm{Aw(kT+T)}{V'}\le \theta\norm{\fkv}{V'}=\theta\norm{\fkw}{V}$.
\end{proof}

\subsection{Norm squeezing with switching controls}\label{sS:switching}
The next result shows that we can squeeze the norm of the state in each time interval $I_k^T$.
\begin{corollary}\label{C:main-switch-sq}
There exists a switching  and piecewise constant control
\begin{equation}\notag
\clV^{k,\rm swi}(t)=u^{k,\rms}(t)\widehat\Phi^{k,\rms}(t),\qquad\widehat\Phi^{k,\rms}(t)\in\{\widehat\Phi_j\mid 1\le j\le M\},
\end{equation}
with~$\norm{u^{k,\rms}(t)}{\bbR}\le M\fkK\norm{\fkw}{V}$. Here, all the intervals of constancy are larger than some~$\tau>0$,  which can be taken independently of~$(k,\fkw)$, such that for the solution of~\eqref{sys-w-fIk} with~$f=\clV^{k,\rm swi}$, we have the norm squeezing  property
\begin{align}\notag
  \norm{\fkY_k(\fkw,\clV^{k,\rm swi})(kT+T)}{V}&\le\tfrac{2+3\theta}{5}\norm{\fkw}{V},\notag
  \end{align}
with~$\theta<1$ as in Corollary~\ref{C:MstaticAct}.
\end{corollary}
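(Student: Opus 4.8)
The plan is to start from the nonswitching, all-actuators-on control furnished by Corollary~\ref{C:MstaticAct} and to replace it by a rapidly switching control that reproduces the same net effect on the state up to a controllably small error. Denote by $f^{\rm ns}(t)=\sum_{j=1}^M(\widehat v_{k,j}(\fkw))(t)\,A^{-1}\Phi_j$ the forcing in~\eqref{sys-w-Phi}, so that $w^{\rm ns}\coloneqq\fkY_k(\fkw,f^{\rm ns})$ satisfies $\norm{w^{\rm ns}(kT+T)}{V}\le\theta\norm{\fkw}{V}$ and $\norm{\widehat v_k(\fkw)(t)}{\ell^2}\le\fkK\norm{\fkw}{V}$ for a.e.~$t$. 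The elementary but decisive observation is that, writing the uniform convex weights $\lambda_j\equiv\tfrac1M$, one has $f^{\rm ns}(t)=\sum_{j=1}^M\lambda_j\bigl(M\widehat v_{k,j}(\fkw)(t)\bigr)A^{-1}\Phi_j$, i.e.~$f^{\rm ns}(t)$ is a convex combination of the single-actuator vectors $\bigl(M\widehat v_{k,j}(\fkw)(t)\bigr)A^{-1}\Phi_j$, each of modulus at most $M\fkK\norm{\fkw}{V}$, since $\norm{\widehat v_{k,j}(\fkw)(t)}{\bbR}\le\norm{\widehat v_k(\fkw)(t)}{\ell^2}\le\fkK\norm{\fkw}{V}$ and $\norm{A^{-1}\Phi_j}{H}=1$. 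This is precisely the source of the amplitude bound $\norm{u^{k,\rms}(t)}{\bbR}\le M\fkK\norm{\fkw}{V}$.

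Next I would construct $\clV^{k,\rm swi}$ by chattering. Partition $I_k^T$ into $N$ equal subintervals of length $\delta=T/N$, and subdivide each into $M$ consecutive pieces of length $\delta/M$; on the $j$-th piece of the $i$-th subinterval activate the single actuator $A^{-1}\Phi_j$ with constant amplitude $M\bar v_{k,j,i}$, where $\bar v_{k,j,i}$ is the mean of $\widehat v_{k,j}(\fkw)$ over that subinterval. By construction at most one actuator is active at each instant, every interval of constancy has length at least $\tau\coloneqq\delta/M$, and the amplitude never exceeds $M\fkK\norm{\fkw}{V}$. The averaging is chosen so that, on each subinterval, the $H$-valued time integrals of $\clV^{k,\rm swi}$ and of $f^{\rm ns}$ coincide; hence the antiderivative $G(t)\coloneqq\int_{kT}^t(\clV^{k,\rm swi}-f^{\rm ns})\,\rmd s$ vanishes at every partition node, in particular $G(kT)=G(kT+T)=0$, and obeys the uniform bound $\sup_{t\in I_k^T}\norm{G(t)}{H}\le C\,M\fkK\,\delta\,\norm{\fkw}{V}$. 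Since $\widehat v_k$ is linear in $\fkw$ with $\clL$-norm at most $\fkK$ uniformly in $k$, both this bound and the partition can be taken independent of $(k,\fkw)$, which yields the claimed uniformity of $\tau$.

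The remaining step is to transfer the smallness of $G$ to a $V$-norm bound on the terminal difference $e\coloneqq\fkY_k(\fkw,\clV^{k,\rm swi})-w^{\rm ns}$, which solves $\dot e+Ae+A_{\rm rc}^A e=\dot G$ with $e(kT)=0$ (well-posed by Lemma~\ref{L:strg-sol-w}). This is the continuity of the solution operator in the relaxation metric, and it is the technical heart of the argument: the pointwise values of $\clV^{k,\rm swi}$ are \emph{large} (amplified by the factor $M$), so a naive estimate through $\norm{\clV^{k,\rm swi}-f^{\rm ns}}{L^2(I_k^T,H)}$ is hopeless, and one must exploit the cancellation encoded in the smallness of $G$. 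I would do this by a duality argument against the backward adjoint evolution, integrating the $\dot G$ term by parts — the boundary terms vanish because $G(kT)=G(kT+T)=0$ — and absorbing the resulting $A$- and $A_{\rm rc}^A$-applied contributions through the parabolic smoothing of the adjoint state, exactly as in the relaxation-continuity results of~\cite{Gamk78,AzmiKunRod21}; equivalently, in the original variable $y=Aw$ this is the continuity in the $V'$-norm that those references establish, since $\norm{e(kT+T)}{V}$ equals the $V'$-distance of the two $y$-trajectories. The main obstacle is precisely that the unusual mapping property $A_{\rm rc}^A\in\clL(\rmD(A),V)$ from Lemma~\ref{L:ArcA}, rather than the textbook $\clL(V,H)$, forces this duality/smoothing route in place of a direct energy estimate; the outcome is $\norm{e(kT+T)}{V}\le C'\sup_{t}\norm{G(t)}{H}$ with $C'$ uniform in $(k,\fkw)$.

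Finally, choosing $N$ (hence $\delta$ and $\tau$) large enough that $C'C M\fkK\delta\le\tfrac{2(1-\theta)}{5}$ gives $\norm{e(kT+T)}{V}\le\tfrac{2(1-\theta)}{5}\norm{\fkw}{V}$, and the triangle inequality closes the estimate via $\norm{\fkY_k(\fkw,\clV^{k,\rm swi})(kT+T)}{V}\le\norm{w^{\rm ns}(kT+T)}{V}+\norm{e(kT+T)}{V}\le\bigl(\theta+\tfrac{2(1-\theta)}{5}\bigr)\norm{\fkw}{V}=\tfrac{2+3\theta}{5}\norm{\fkw}{V}$, as claimed.
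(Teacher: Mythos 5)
Your overall strategy coincides with the paper's: its proof of this corollary consists of invoking the four-stage construction of \cite[Proof of Thm.~3.3]{AzmiKunRod21}, which produces intermediate controls $\clV^1,\dots,\clV^4$ with each terminal-state increment $\Xi_n$ bounded by $\tfrac{1-\theta}{10}\norm{\fkw}{V}$, the last control being the switching, piecewise constant, cyclically chattering one with amplitude bound $M\fkK\norm{\fkw}{V}$ and intervals of constancy at least $\tau$; the triangle inequality then yields exactly your arithmetic $\theta+\tfrac{2(1-\theta)}{5}=\tfrac{2+3\theta}{5}$. Your convex-combination observation (source of the factor $M$ in the amplitude bound), the chattering partition, the uniformity of $\tau$ in $(k,\fkw)$, and the error budget all match what that construction delivers; you have simply compressed the four approximation stages into one.

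The compression, however, leaves a genuine gap at the step you yourself call the technical heart: the estimate $\norm{e(kT+T)}{V}\le C'\sup_t\norm{G(t)}{H}$, linear in $\sup_t\norm{G(t)}{H}$ with $C'$ independent of the forcing, is not obtainable from the duality/smoothing argument you sketch. After integrating by parts one faces $\int_{kT}^{kT+T}U(kT+T,s)\bigl(A+A_{\rm rc}^A(s)\bigr)G(s)\,\rmd s$; since $G(s)$ is only $H$-valued (the $A^{-1}\Phi_j$ lie in $H$ but in general not in $V$, as $\Phi_j\in\fkS_{\rmD(A)'}$ only), the term $AG(s)$ lives in $\rmD(A)'$ and the smoothing rate from $\rmD(A)'$ to $V$ is $(kT+T-s)^{-3/2}$, which is not integrable. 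What rescues the argument is structure you built in but did not use here: $G$ vanishes at every partition node and satisfies $\norm{G(s)}{H}\le L\,\dist(s,\{\text{nodes}\})$ with $L\lesssim M\fkK\norm{\fkw}{V}$, which upgrades the divergent bound to one of order $\bigl(L\sup_t\norm{G(t)}{H}\bigr)^{1/2}=O(\delta^{1/2})\norm{\fkw}{V}$ --- still vanishing as $\delta\to0$, so your final choice of $N$ changes only cosmetically. Less cosmetic is the perturbation term $\int U(\cdot,s)A_{\rm rc}^A(s)e(s)\,\rmd s$: the two trajectories are \emph{not} close in $V$ (let alone $\rmD(A)$, where $A_{\rm rc}^A$ acts) at intermediate times, only at the nodes, so this term cannot be absorbed by a naive Gronwall in the $V$-norm; handling it is precisely why \cite{AzmiKunRod21} splits the approximation into several stages, each treating one source of error. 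Your architecture is therefore sound and leads to the stated conclusion, but the key continuity estimate is asserted in a form that fails literally, and repairing it essentially reproduces the multi-stage argument the paper imports.
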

\begin{proof}
Note that Assumptions~\ref{A:A0sp} and~\ref{A:A0cdc} correspond to~\cite[Assums.~2.1 and~2.2]{AzmiKunRod21} and Corollary~\ref{C:MstaticAct} corresponds to~\cite[Assum.~2.4]{AzmiKunRod21}.

We cannot apply immediately the results in~\cite{AzmiKunRod21} because in~\cite[Assum.~2.3]{AzmiKunRod21} it is assumed that~$A_{\rm rc} \in\clL(V,H)$, while in Assumption~\ref{A:A1} we have~$A_{\rm rc} \in\clL(H,V')$.
However,   both Assumptions lead to the inequality in Lemma~\ref{L:strg-sol-w}; see the analogue inequality in~\cite[Eq.~(3.16)]{AzmiKunRod21}. It is this inequality that is essential within~\cite[Proof of Thm.~3.3, Sects.~3.1--3.5]{AzmiKunRod21}, where the fact $A_{\rm rc} \in\clL(V,H)$ is not explicitly used. This means that, denoting our control in~\eqref{sys-w-Phi} as
 \begin{align}
\clV^0(\fkw)(t)\coloneqq \textstyle\sum\limits_{j=1}^M(\widehat v_{k,j}(\fkw))\widehat\Phi_j,\quad\mbox{with}\quad\widehat\Phi_j\coloneqq A^{-1}\Phi_j,
  \end{align}
we can follow~\cite[Proof of Thm.~3.3]{AzmiKunRod21} and construct a sequence of  controls~$\clV^1(\fkw)$, $\clV^2(\fkw)$, $\clV^3(\fkw)$ and~$\clV^4(\fkw)$, where, for all~$t\in {I_{k}^T}$ and~$n\in\{1,2,3,4\}$,
 \begin{align}\label{diff_Vswitch}
 \Xi_n\coloneqq \norm{\fkY_k(\fkw,\clV^{n}(\fkw))(kT+T)-\fkY_k(\fkw,\clV^{n-1}(\fkw))(kT+T)}{V}\le \tfrac{1-\theta}{10}\norm{\fkw}{V},
  \end{align}
See the analogue equations in~\cite[Eqs.~(3.21), (3.40b), (3.43a), (3.58)]{AzmiKunRod21}. Furthermore, see~\cite[Eq.~(3.43c)]{AzmiKunRod21}, the control~$\clV^4(\fkw)$ is switching and piecewise constant taking values in the set~$\{r\widehat\Phi_j\mid\norm{r}{\bbR}\le\varSigma(\fkw)\}$, for some positive constant~$\varSigma(\fkw)\le M\fkK\norm{\fkw}{V}$, where the intervals of constancy are nondegenerate. Namely, by construction the actuators are activated as a finite number of repetitions of the cycle
\begin{equation}\label{cycle}
\begin{split}
& \widehat\Phi_1\to\widehat\Phi_2\to\dots\to\widehat\Phi_{M-1}\to\widehat\Phi_M
 \to\widehat\Phi_M\to\widehat\Phi_{M-1}\to\dots\to\widetilde\Phi_2\to\widehat\Phi_1,
 \end{split}
\end{equation}
where each actuator is active for a period of time not smaller than a suitable~$\tau>0$ independent of~$(k,\fkw)$; see~\cite[Eq.~(3.57b)]{AzmiKunRod21}.
Finally, by~\eqref{diff_Vswitch} it follows that
\begin{align}\notag
  \norm{\fkY_k(\fkw,\clV^{4}(\fkw))(kT+T)}{V}&\le \Xi_4+\Xi_3+\Xi_2+\Xi_1+\norm{\fkY_k(\fkw,\clV^{0}(\fkw))(kT+T)}{V}\notag\\
  &\le (4\tfrac{1-\theta}{10}+\theta)\norm{\fkw}{V}=\tfrac{4+6\theta}{10}\norm{\fkw}{V}.\notag
  \end{align}
That is, we can take~$\clV^{k,\rm swi}(t) = \clV^{4}(\fkw)(t)$.
\end{proof}

\subsection{Stabilizability by means of switching controls}\label{sS:asynull-switch}
 Let us now consider~\eqref{sys-w-fIk} in the infinite time-horizon as
 \begin{align}\label{sys-w-fI}
 &\dot w+Aw+A_{\rm rc}^A w=f,\qquad w(0)=\fkw_0\in V,\qquad t>0.
\end{align}

The next result shows that by concatenating the controls in Corollary~\ref{C:main-switch-sq}  we obtain a control driving the system asymptotically to zero.
\begin{theorem}\label{T:main-switch-asy}
Let us consider the concatenated control
 \begin{equation}\notag
u(t)\widehat\Phi^{\rm swi}(t)\coloneqq \clV^{k,\rm swi}(t),\quad\mbox{for}\quad t\in I_k^T,
 \end{equation}
 with~$\clV^{k,\rm swi}(t)$ given by Corollary~\ref{C:main-switch-sq}. Then,
the solution of~\eqref{sys-w-fI} with~$f=u\widehat\Phi^{\rm swi}$, satisfies
\begin{align}\notag
  \norm{w(t)}{V}&\le\ovlineC{T,\theta^{-1},\fkK,M,C_{\rm rc}}\rme^{-\mu(t-s)}\norm{w(s)}{V},\qquad t\ge s\ge0,
\end{align}
where~$\mu\coloneqq\frac1T\log(\tfrac{5}{2+3\theta})$.  Furthermore, ~$\norm{u(t)}{\bbR}\le \ovlineC{\fkK,M}\norm{\fkw_0}{V}$.
 \end{theorem}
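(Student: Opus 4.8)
The plan is to convert the per-interval norm-squeezing of Corollary~\ref{C:main-switch-sq} into a continuous-time exponential bound by combining the geometric decay at the grid points~$kT$ with an a priori estimate that controls the solution inside each interval~$I_k^T$. First I would set~$\beta\coloneqq\tfrac{2+3\theta}{5}$ and record that~$\beta\in(0,1)$ (since~$\theta<1$) and that~$\mu=\tfrac1T\log(\tfrac{5}{2+3\theta})=\tfrac1T\log\beta^{-1}>0$, so that~$\beta=\rme^{-\mu T}$. On each~$I_k^T$ the concatenated control coincides with~$\clV^{k,\rm swi}$, whose construction starts from the state~$w(kT)$; hence Corollary~\ref{C:main-switch-sq} gives~$\norm{w((k+1)T)}{V}\le\beta\norm{w(kT)}{V}$, and iterating over the grid yields~$\norm{w(kT)}{V}\le\beta^{k-j}\norm{w(jT)}{V}$ for all~$k\ge j\ge0$.

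Second, I would estimate the solution inside a single interval using Lemma~\ref{L:strg-sol-w}. Since each~$\widehat\Phi^{k,\rms}(t)$ is one of the unit vectors~$A^{-1}\Phi_j\in\fkS_H$ (Corollary~\ref{C:MstaticAct}) and~$\norm{u^{k,\rms}(t)}{\bbR}\le M\fkK\norm{w(kT)}{V}$, the forcing~$f=\clV^{k,\rm swi}$ obeys~$\norm{f}{L^2(I_k^T,H)}^2\le TM^2\fkK^2\norm{w(kT)}{V}^2$. Feeding this into Lemma~\ref{L:strg-sol-w} produces a uniform interior bound~$\sup_{t\in I_k^T}\norm{w(t)}{V}\le C_1\norm{w(kT)}{V}$ with~$C_1\coloneqq(D_Y(1+TM^2\fkK^2))^{1/2}$ and~$D_Y=2\rme^{2C_{\rm rc}^2T}$, so that~$C_1$ depends only on~$T,\fkK,M,C_{\rm rc}$.

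Third, I would combine the two previous steps. For~$t\in I_m^T$ and a grid point~$jT$ with~$j\le m$, one obtains~$\norm{w(t)}{V}\le C_1\norm{w(mT)}{V}\le C_1\beta^{m-j}\norm{w(jT)}{V}$, and since~$m-j\ge(t-jT)/T-1$ one converts this into~$\beta^{m-j}\le\beta^{-1}\rme^{-\mu(t-jT)}$; as~$jT\le s$ gives~$\rme^{-\mu(t-jT)}\le\rme^{-\mu(t-s)}$, this already proves the claim when~$s$ is a grid point (in particular~$s=0$), with~$C=C_1\beta^{-1}$. For a general~$s\ge0$ in the interior of~$I_j^T$, I would pass from~$jT$ to~$s$ by invoking the a priori estimate of Lemma~\ref{L:strg-sol-w} on the partial subinterval in order to compare~$\norm{w(jT)}{V}$ and~$\norm{w(s)}{V}$. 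This bookkeeping, in which the relevant endpoints are not grid points, is the step requiring the most care, and it is precisely where the single-interval propagation estimate (rather than only the squeezing) is indispensable; the resulting constant increases in~$T,\theta^{-1},\fkK,M,C_{\rm rc}$, matching~$\ovlineC{T,\theta^{-1},\fkK,M,C_{\rm rc}}$.

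Finally, the control bound is immediate: for~$t\in I_k^T$ we have~$\norm{u(t)}{\bbR}=\norm{u^{k,\rms}(t)}{\bbR}\le M\fkK\norm{w(kT)}{V}\le M\fkK\beta^k\norm{\fkw_0}{V}\le M\fkK\norm{\fkw_0}{V}$, which gives~$\norm{u(t)}{\bbR}\le\ovlineC{\fkK,M}\norm{\fkw_0}{V}$. I expect the main obstacle to be the third step, namely upgrading the discrete geometric decay along the grid to a uniform continuous-time exponential estimate valid for all~$t\ge s\ge0$, since the interior a priori bound must be used to absorb the contributions of the partial first and last subintervals into the final constant.
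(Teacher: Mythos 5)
Your proposal follows essentially the same route as the paper's proof: geometric squeezing at the grid points via Corollary~\ref{C:main-switch-sq}, the interior propagation bound from Lemma~\ref{L:strg-sol-w} with $\norm{f}{L^2(I_k^T,H)}\le T^{\frac12}M\fkK\norm{w(kT)}{V}$, the conversion $\vartheta^{k}\le\vartheta^{-1}\rme^{-\mu t}$, and the same direct control-magnitude bound. The only divergence is that the paper in effect only treats the case where $s$ is a grid point (it writes the estimate against $\norm{\fkw_0}{V}$), so your extra bookkeeping for interior $s$ goes beyond the written proof; be aware, though, that Lemma~\ref{L:strg-sol-w} propagates forward in time and therefore bounds $\norm{w(s)}{V}$ by $\norm{w(jT)}{V}$ rather than the reverse, so that particular step would need a different justification than the one you sketch.
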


 \begin{proof}
By concatenating the control forcings given by  Corollary~\ref{C:main-switch-sq} we arrive at a concatenated control~$u\widehat\Phi^{\rm swi}$ and, for the associated solution, we find that
  \begin{align}
 & \norm{w(kT+T)}{V}\le\tfrac{2+3\theta}{5}\norm{w(kT)}{V}\le(\tfrac{2+3\theta}{5})^{k+1}\norm{\fkw_0}{H},\notag\\
  \mbox{and}\quad&\norm{u(t)}{\bbR}\le M\fkK\norm{w(kT)}{V}\max_{1\le j\le M}\norm{\widehat\Phi_{j}}{H}\le M\fkK(\tfrac{2+3\theta}{5})^{k}\norm{\fkw_0}{V}.\notag
  \end{align}
Note that the magnitude of the control is bounded by~$M\fkK\norm{w(kT)}{V}$ in Corollary~\ref{C:main-switch-sq}.

Now, by choosing~$k\in\bbN$ such that
  $t\in I_k^T= (kT, kT+T]$ and recalling Lemma~\ref{L:strg-sol-w}, it follows for every~$t>0$ that 
\begin{align}\notag
\norm{w(t)}{V}\le D_Y^\frac12\left(\norm{w(kT)}{V}+\norm{u_\Phi}{L^2(I_k^T,H)}\right)
\le D_Y^\frac12(\tfrac{2+3\theta}{5})^{k}\left(1+T^\frac12M\fkK\right)\norm{\fkw_0}{V},
    \end{align}
    which implies
\begin{align}\notag
\norm{w(t)}{V}
\le D_Y^\frac12\left(1+T^\frac12M\fkK\right) D\rme^{-\mu t}\norm{\fkw_0}{V}
    \end{align}
 with~$D=\tfrac5{2+3\theta}$ and~$\mu=-\frac1T\log(\tfrac{2+3\theta}{5})$. Note that, since~$t\in I_k^T$, with~$\vartheta\coloneqq\tfrac{2+3\theta}{5}<1$ we have that $\vartheta^{k}< \vartheta^{\frac{t-T}T}=
 \vartheta^{-1}\vartheta^{\frac{1}Tt}$.
Thus, the result  follows for the concatenation of the inputs given in Corollary~\ref{C:main-switch-sq}.
 \end{proof}

\section{Proof of Theorem~\ref{T:main-Intro-swi}}\label{S:proofT:main-Intro}
I order to apply the results derived for the abstract evolutionary equations, we start by writing
~\eqref{sys-y-axy} as~\eqref{sys-y-Intro},
\begin{align}\label{sys-y-parab}
 \dot y+A y+A_{\rm rc}y=u
 \deltafun_{c},\quad y(0)=y_0,\quad t>0,
\end{align}
with~$A\coloneqq-\nu\Delta +\Id$  and~$A_{\rm rc}z=A_{\rm rc}(t)z\coloneqq(a(t,\Bigcdot)-1)z+b(t,\Bigcdot)\cdot\nabla z$.

\subsection{Satisfiability of Assumptions~\ref{A:A0sp}--\ref{A:A1}}\label{sS:satA0sp-A1}
The satisfiability of Assumptions~\ref{A:A0sp}--\ref{A:A1}, under~\eqref{assum.ab.parab}, is shown in~\cite[Sect.~5.1]{KunRodWal24-cocv}  where~$A\in\clL(V,V')$ is considered under the given boundary conditions (bcs). Namely,
with pivot space as~$H=L^2(\Omega)$ and the space~$V$  as in~\eqref{bcs.parab}.

 Note that, we can also rewrite~\eqref{sys-y-parab} above under normalized actuators~$\widehat\deltafun_{c}$ as in~\eqref{sys-y-Intro}, simply by rescaling~$u$ accordingly,
 \begin{align}\notag
  u \deltafun_{c}=\widehat u\widehat \deltafun_{c}
  \quad\mbox{with}\quad \widehat
 u=\norm{\deltafun_{c}}{\rmD(A)'}u \quad\mbox{and}\quad\widehat\deltafun_{c}=\norm{\deltafun_{c}}{\rmD(A)'}^{-1}\deltafun_{c}.
\end{align}

Therefore, it remains to check the satisfiability of Assumption~\ref{A:MstaticAct} and to specify the application of the abstract result to our concrete system with Dirac delta actuators. This is done in the following sections.

\subsection{Satisfiability of Assumption~\ref{A:MstaticAct}}\label{sS:satAMstaticAct}
Recalling Remark~\ref{R:muTtheta}, to show the complete stabilizability result in Theorem~\ref{T:main-Intro-swi}, we need a stronger result than the one in Assumption~\ref{A:MstaticAct}, in order to allow us to impose the rate~$\mu_{T,\theta}\coloneqq\frac1T\log(\tfrac{5}{2+3\theta})>0$ as in Theorem~\ref{T:main-switch-asy}. 

Let us fix an arbitrary desired rate~$\mu>0$. Let us also fix~$\theta=\theta_0\in(0,1)$ and set~$T_0\coloneqq\mu^{-1}\log(\tfrac{5}{2+3\theta_0})$. We show that Assumption~\ref{A:MstaticAct} holds with~$(T,\theta)=(T_0,\theta_0)$, which gives us the desired exponential decrease rate~$\mu_{T_0,\theta_0}=\frac1{T_0}\log(\tfrac{5}{2+3\theta_0})=\mu$ as in Theorem~\ref{T:main-switch-asy}.

We set~$\overline \mu\coloneqq\log(\theta_0^{-1})T_0^{-1}$. From~\cite[Thm.~3.1]{KunRodWal24-cocv} we know that
\begin{align*}
 \dot y+A y+A_{\rm rc}y=-\lambda\sum_{j=1}^{M}\langle\deltafun_{x^j},A^{-1}y\rangle_{\rmD(A)',\rmD(A)}\deltafun_{x^j},\quad y(0)=y_0\in V',
\end{align*}
is exponentially stable with rate~$\overline\mu$ in the~$V'$-norm, for~$\lambda>0$ and~$M$   large enough, where the delta actuators are located at appropriate positions in the spatial domain, $x^j\in\bfx\subset\Omega\subset\bbR^d$, $d\in\{1,2,3\}$; see~\cite[Sect.~5.2]{KunRodWal24-cocv}.
Furthermore, the~$V'$-norm will be strictly decreasing. In other words, the solution will satisfy
\begin{align}\label{sys-y-parab-obli-exp}
 \norm{y(t)}{ V'}\le \ex^{-\overline \mu (t-s)}\norm{y(s)}{ V'}, \qquad t\ge s\ge0.
\end{align}
In particular, we find that for all~$k\in\bbN$,
\begin{align}
&\norm{y(kT_0+T_0)}{ V'}\le \ex^{-\overline \mu T_0}\norm{y(kT_0)}{ V'}=\theta_0\norm{y(kT_0)}{ V'},\label{squeez-T0theta0}\\
&\mbox{with}\quad \theta_0\in(0,1)\mbox{ and }T_0\coloneqq\mu^{-1}\log(\tfrac{5}{2+3\theta_0})\notag
\end{align}
which gives us~\eqref{goal-static} with~$(T,\theta)=(T_0,\theta_0)$ as claimed above.

It remains to show the satisfiability of points~\ref{A:MstaticAct-norm}--\ref{A:MstaticAct-linfu} in Assumption~\ref{A:MstaticAct}. We can simply normalize the delta distributions~$\Phi_j\coloneqq\norm{\deltafun_{x^j}}{\rmD(A)'}^{-1}\deltafun_{x^j}$ to obtain the satisfiability of ~\ref{A:MstaticAct-norm}. Then it remains to check ~\ref{A:MstaticAct-linfu} concerning the control input
\begin{align}\label{vk-check}
v(t)\in\bbR^M\quad\mbox{with}\quad v_j(t)\coloneqq -\lambda\norm{\deltafun_{x^j}}{\rmD(A)'}\langle\deltafun_{x^j},A^{-1}y(t)\rangle_{\rmD(A)',\rmD(A)}.
\end{align}

Recalling the notation~$A_{\rm rc}^A=A^{-1}A_{\rm rc}A\in\clL(\rmD(A),V)$, for~$w\coloneqq A^{-1}y$ we find
\begin{align*}
 \dot w+A w+A_{\rm rc}^Aw=-\lambda\sum_{j=1}^{M}v_j A^{-1}\Phi_j,
\end{align*}
with~$w(0)=A^{-1}y_0\in V$, and we  have
\begin{align}\label{sys-y-parab-obli-exp-st}
 \norm{w(t)}{V}\le \ex^{-\overline \mu (t-s)}\norm{w(s)}{V}, \qquad t\ge s\ge0.
\end{align}

By arguments as in~\eqref{strg-sol-dw} we find that
\begin{align}
\tfrac{\rmd}{\rmd t}\norm{w}{V}^2&= -2\norm{w}{\rmD(A)}^2-2(A_{\rm rc}^A w,Aw)_H-2\lambda\sum_{j=1}^{M}\left(\langle\deltafun_{x^j},w\rangle_{\rmD(A)',\rmD(A)}\right)^2\notag\\
&= -2\norm{w}{\rmD(A)}^2-2(A_{\rm rc}^A w,Aw)_H
\le -\norm{w}{\rmD(A)}^2+C_{\rm rc}^2\norm{w}{V}^2.\label{sys-y-parab-delta-stabA-es}
\end{align}

After time integration of~\eqref{sys-y-parab-delta-stabA-es}, it follows that, for all~$t>s\ge0$,
\begin{align}
&\norm{w(t)}{V}^2+\norm{w}{L^2((s,t),\rmD(A))}^2\le\norm{w(s)}{V}^2+C_{\rm rc}^2\norm{w}{L^2((s,t),V)}^2.\label{sys-w-LinfHL2V}
\end{align}
In particular, using~\eqref{sys-y-parab-obli-exp-st},
\begin{align}
&\norm{w}{L^2((s,+\infty),\rmD(A))}^2\le\widehat C\norm{w(s)}{V}^2,\label{sys-y-L2int}
\end{align}
with~$\widehat C\coloneqq 1+C_{\rm rc}^2\tfrac{1}{2\mu}$.
By~\eqref{sys-y-L2int}, the input control~$v^*=(v^*_1,v^*_2,\dots,v^*_{M})$ with
\begin{align}
v^*_j&\coloneqq-\lambda\langle\deltafun_{x^j},w\rangle_{\rmD(A)',\rmD(A)}\notag
\intertext{satisfies~$v^*\in L^2((s,+\infty),\bbR^{M})$ and, with~$\dnorm{\deltafun}{}\coloneqq\max_{1\le j\le M}\norm{\deltafun_{x^j}}{\rmD(A)'}$,}
\norm{v^*}{L^2((s,+\infty),\bbR^{M})}^2&\le\lambda^2\dnorm{\deltafun}{}^2\norm{w}{L^2((s,+\infty),\rmD(A))}^2\le\lambda^2\dnorm{\deltafun}{}^2\widehat C\norm{w(s)}{V}^2.\label{inputL2}
\end{align}
This square integrability bound is not enough for ~\ref{A:MstaticAct-linfu} in Assumption~\ref{A:MstaticAct}, where the input is required to be essentially bounded.
Next, to obtain such boundedness we will use tools from optimal control. For this purpose, it is convenient to work with initial states in the pivot space~$H$. Thus, we consider ~$z\coloneqq A^{-\frac12}y=A^{\frac12}w$, which satisfies
\begin{align}\label{sys-z-opt}
 \dot z+A z+Z_{\rm rc}z=-\lambda\sum_{j=1}^{M}\langle\deltafun_{x^j},A^{-\frac12}z\rangle_{\rmD(A)',\rmD(A)} A^{-\frac12}\deltafun_{x^j},
\end{align}
with~$Z_{\rm rc}\coloneqq A^{-\frac12}A_{\rm rc}A^{\frac12}$ and initial state~$z(0)=A^{-\frac12}y_0\in H$. Due to~\eqref{sys-w-LinfHL2V}, we find
\begin{align}
&\norm{z(t)}{H}^2+\norm{z}{L^2((s,t),V)}^2\le\norm{z(s)}{H}^2+C_{\rm rc}^2\norm{z}{L^2((s,t),H)}^2.\label{sys-z-LinfHL2V}
\end{align}
Recalling~\eqref{inputL2}, we have that~$\tfrac12\norm{(z,v^*)}{L^2(\bbR_s,H\times\bbR^M)}^2$ is bounded. This suggests to look for a control~$\overline v$ and corresponding state~$\overline z$ minimizing the functional
\begin{subequations}\label{OCPs}
\begin{align}
&\clJ_s(z,v)\coloneqq\tfrac12\norm{(z,v)}{L^2(\bbR_s,H\times\bbR^M)}^2
\intertext{For a given generic initial state~$\fkh\in H$, we consider the minimizer~$(\overline z,\overline v)$,}
&\clJ_s(\overline z,\overline v)=\min_{(z,v)\in\clX_\fkh}\clJ_s(z,v),
\intertext{in the set}
& \clX_\fkh\coloneqq\left\{(z,v)\in \clX^0\mid
\dot z+A z+Z_{\rm rc}z= Bv\mbox{ and }z(s)=\fkh\right\},\\
\intertext{with}
&\clX^0\coloneqq W(\bbR_s,V,V')\times L^2(\bbR_s,\bbR^M),\\
& B\in\clL(\bbR^M,V'),\quad v\mapsto Bv\coloneqq \sum_{j=1}^{M}v_j A^{-\frac12}\deltafun_{x^j}.
\end{align}
\end{subequations}

Note that this is a linear convex optimization problem, for which we have one, and only one, minimizer~$(\overline z,\overline v)=(\overline z,\overline v)(\fkh)$, for each~$\fkh\in H$.
Recalling~\eqref{sys-y-parab-obli-exp-st} and~\eqref{inputL2}, by minimality we have that the optimal cost satisfies~$\clJ_s(\overline z,\overline v)\le C_J\norm{\fkh}{H}^2$ with~$ C_J\coloneqq\frac12\max\{\frac{1}{2\mu},\lambda^2\dnorm{\deltafun}{}^2\widehat C\}$. It is also well known that it can be written as~$\tfrac12(\Pi(s) \fkh,\fkh)_H$, where~$\Pi(s)\in\clL(H)$ is a nonnegative symmetric operator solving a differential Riccati equation in the time interval~$(s,+\infty)$. Combining first order optimality conditions and the dynamical programming principle, we find that the optimal control is given by
\begin{equation}\label{opt-u}
\overline u(t)=-B^*\Pi(t)\overline z(t)=-B^*p(t),
\end{equation}
 where~$p= \Pi\overline z$ is the adjoint state, which satisfies
\begin{equation}\label{p.allHaaV}
p(t)\in H\mbox{ for all }t\ge s,\mbox{ and } p(t)\in V\mbox{ for almost all }t\ge s,
\end{equation}
and also the dynamics
\begin{equation}\label{p.dyn}
\dot p=Ap+Z_{\rm rc}^*p-\overline z, \qquad\mbox{for all } t\ge s.
\end{equation}
which is a parabolic-like equation, when seen backwards in time.

In order to show the essential boundedness of the control input required in Assumption~\ref{A:MstaticAct} we will show that we have the improved analogue of~\eqref{p.allHaaV} as follows,
\begin{equation}\label{p.allVaaDA}
p(t)\in V\mbox{ for all }t\ge s,\mbox{ and } p(t)\in \rmD(A)\mbox{ for almost all }t\ge s.
\end{equation}
For that we have a closer look at the reaction-convection term~$A_{\rm rc}h= ah+b\cdot\nabla h$, with~$(a,b)$ satisfying~\eqref{assum.ab.parab}.
Observe that, by writing
\begin{equation}\label{def_Arc}
\langle A_{\rm rc}h,\phi\rangle_{\rmD(A)',\rmD(A)}\coloneqq \langle h, a\phi+\nabla\cdot(b\phi)\rangle_{V',V},
\end{equation}
 we can see that~$A_{\rm rc}(t)\in\clL(V',\rmD(A)')$ (for a.e.~$t>0$). Note also that the definition in~\eqref{def_Arc} holds for smooth tuples~$(h, a,b,\phi)$, as a consequence of direct computations using integration by parts; then, we can indeed write~\eqref{def_Arc}  due to a density argument.

Therefore,~$Z_{\rm rc}=A^{-\frac12}A_{\rm rc}A^{\frac12}\in\clL(H,V')$ and consequently~$Z_{\rm rc}^*\in\clL(V,H)$. The latter allows us to conclude~\eqref{p.allVaaDA}, from standard energy estimates for strong solutions. Furthermore, we have the smoothing property of parabolic equations  as follows (cf. arguments in~\cite[Sect.~4.2]{KunRod23-dcds}),
\begin{align}
\norm{p(t)}{V}^2&\le C_1\left(\norm{p(t+1)}{H}^2+\norm{\overline w}{L^2((t,t+1),H)}^2+\norm{p}{L^2((t,t+1),H}^2\right)\notag\\
&\le C_2\left(\norm{\overline z(t+1)}{H}^2+\norm{\overline z}{L^2((t,t+1),H)}^2\right)\le C_3\norm{\overline z(t)}{H}^2,\qquad t\ge s,\notag
\end{align}
from which we derive, for the optimal input~\eqref{opt-u},
 \begin{align}
\norm{\overline v(t)}{\ell^2}^2= \norm{B^* p(t)}{\ell^2}^2\le\norm{B^* }{\clL(V,\bbR^M)}^2C_3\norm{\overline z(t)}{H}^2.\label{uLinf1}
 \end{align}

Hence the optimal state satisfies
\begin{equation}\notag
 \dot{\overline z}+A\overline  z+Z_{\rm rc}\overline  z+BB^*\Pi\overline  z=0,
\end{equation}
which combined with the cost boundedness~$\clJ_s(\overline z,\overline v)\le C_J\norm{\fkh}{H}^2$ and with Datko's Theorem\cite[ Thm.~1]{Datko72}, leads us to
\begin{equation}\label{eD-Datko}
\norm{\overline z(t)}{H}^2\le D\rme^{-\varepsilon(t-s)}\norm{\overline z(s)}{H}^2,
 \end{equation}
 for suitable constants~$\varepsilon>0$ and~$D\ge1$. By~\eqref{uLinf1},
the essential boundedness of the control input  follows as
  \begin{align}
 \norm{\overline v}{L^\infty((s,s+T),\bbR^M)}^2\le \fkK\norm{\overline z(s)}{H}^2=\fkK\norm{A^{\frac12}\overline z(s)}{V'}^2,\quad\mbox{for all } s\ge0,\mbox{ } T>0,\notag
 \end{align}
with~$\fkK=\norm{B}{\clL(\bbR^M,V')}^2C_3D$.
Now, for~$\overline y\coloneqq A^{\frac12}\overline z$ we find
\begin{align}
& \dot{\overline y}+A \overline y+A_{\rm rc}\overline y=\sum_{j=1}^{M}\widehat  v_j \norm{\deltafun_{x^j}}{\rmD(A)'}^{-1}\deltafun_{x^j}\quad\mbox{with}\quad\widehat  v_j\coloneqq \norm{\deltafun_{x^j}}{\rmD(A)'}\overline v_j.\notag
\end{align}
Note that, the  control input~$\widehat  v$ satisfies
  \begin{align}
 \norm{\widehat  v}{L^\infty((s,s+T),\bbR^M)}^2\le \norm{\overline v}{L^\infty((s,s+T),\bbR^M)}^2\le \dnorm{\deltafun}{}^2\fkK\norm{\overline y(s)}{V'}^2,\quad\mbox{for all } s\ge0,\mbox{ } T>0,\notag
 \end{align}
and we conclude that point~\ref{A:MstaticAct-linfu} in Assumption~\ref{A:MstaticAct} is satisfied with~$v=\widehat  v$.

\subsection{Proof of Theorem~\ref{T:main-Intro-swi}}
 Let~$A$ and~$A_{\rm rc}$ be as in~\eqref{sys-y-parab} and let~$\mu>0$. We already know (from Sects~\ref{sS:satA0sp-A1} and~\ref{sS:satAMstaticAct}), that the Assumptions~\ref{A:A0sp}--\ref{A:MstaticAct} are satisfied for suitable normalized actuators~$\Phi_j=\norm{\deltafun_{x^j}}{\rmD(A)'}^{-1}\deltafun_{x^j}$, $1\le j\le M$. In particular, Assumption~\ref{A:MstaticAct} holds with~$(T,\theta)=(T_0,\theta_0)$ as in~\eqref{squeez-T0theta0}. By Theorem~\ref{T:main-switch-asy}, with~$M$ as in Corollary~\ref{C:main-switch-sq},  we can construct a switching control force~$u(t)A^{-1}\Phi^{\rm swi}(t)$, with~$\Phi^{\rm swi}(t)\in\deltafun_\bfx$ and such that~$w$ solves, for~$t>0$,
\begin{align}\label{sys-w-fI-app}
 &\dot w+Aw+A_{\rm rc}^A w=uA^{-1}\Phi^{\rm swi},\qquad w(0)=A^{-1}y_0\in V,
\end{align}
and satisfies~$\norm{w(t)}{V}\le\rme^{-\mu(t-s)}\norm{w(s)}{V}$, for~$t\ge s\ge0$. Therefore, $y=Aw$ solves, for~$t>0$,
\begin{align}\label{sys-y-fI-app}
 &\dot y+Ay+A_{\rm rc}y=u\Phi^{\rm swi},\qquad y(0)=y_0\in V',
\end{align}
and satisfies~$\norm{y(t)}{V'}\le\rme^{-\mu(t-s)}\norm{y(s)}{V'}$. Finally, note that, necessarily~$\Phi^{\rm swi}(t)=\deltafun_{c(t)}$ with a suitable piecewise constant function~$c(t)\in\bfx$. \qed

\section{Receding horizon control}\label{S:stabil_rhc}
It is a standard procedure to look for stabilizing controls as the minimizers of appropriate energy-like functionals. This is also important for applications where the minimization of the spent energy represented by those functionals is demanded.

\subsection{The framewok}\label{sS:rhc-frwork}
In order to simplify the exposition, we denote the solution of
\begin{align}\label{conrol_sys_b}
& \dot y+Ay+A_{\rm rc}(t)y=\sum^{M}_{j =1} \mathbf{u}_j(t) \deltafun_{x^j},  \qquad y(\overline t_0)=\overline y_0,\qquad \overline t_0<t<\overline t_0+T,
\intertext{for a given  tuple~$(\overline t_0,T,\overline y_0)\in \overline\bbR_{+}\times (\bbR_{+}\cup\{\infty\})\times V'$, and~$\bfu\in L^2((\overline t_0,\overline t_0+T),\bbR^M)$ by}
&\rmY_T(\overline t_0,\overline y_0;\bfu)(t)\coloneqq y(t),\quad\mbox{with}\quad\bfu(t)\coloneqq( \mathbf{u}_1(t),\dots, \mathbf{u}_M(t))\in\bbR^M.\notag
  \end{align}
 Then we fix~$\beta>0$ and define  the functional
\begin{equation}\notag
J_{T}(\mathbf{u}; \overline t_0,  \overline{y}_0) = \frac{1}{2} \int^{\overline t_0 +T}_{\overline t_0} \left(   \norm{\rmY_T(\overline t_0,\overline y_0;\bfu)(t)}{H}^2+\beta \norm{\mathbf{u}(t)}{\ell^2}^2\right)\rmd t.
\end{equation}
Finally, we denote the set of admissible controls as
\begin{equation}
 \mathcal{U}^{\overline t_0 ,T}_{\rm ad} := \{ \mathbf{u}\in  L^2((\overline t_0,\overline t_0 +T),\bbR^{M}) \mid \norm{\bfu(t)}{0}\leq 1,  \text{ for a.e.  } t \in (\overline t_0, \overline t_0 +T) \}.\notag
\end{equation}

For a given initial state~$y_0\in V'$, we would look for a stabilizing input as the solution of the following infinite horizon problem
\begin{align}
\label{tag:opinf}\stepcounter{equation}
 \tag*{{\rm (\theequation:\text{$P_{\infty}^{0,y_0}$})}}
\inf_{\bfu\in\clU^{0,\infty}_{\rm ad}} J_{\infty}(\mathbf{u};  0, y_0).
\end{align}

To address problem~\ref{tag:opinf}, we utilize a receding horizon framework, where the solution is obtained through a sequence of finite time-horizon optimal controls problems defined over the time intervals~$(\overline t_0,\overline t_0+T)$, namely,  problems as
\begin{align}
\label{tag:opfin}\stepcounter{equation}
 \tag*{{\rm (\theequation:\text{$P_{T}^{\overline t_0,\overline{y}_0}$})}}
\inf_{\bfu\in\clU^{\overline t_0,T}_{\rm ad}} J_{T}(\mathbf{u};  \overline t_0, \overline y_0).
\end{align}

\begin{remark}
We emphasize that~$\norm{\cdot}{0}$ is neither continuous nor convex and  the sets of admissible controls $\mathcal{U}^{\overline{t}_0,T}_{\rm ad}$ and $\mathcal{U}^{0,\infty}_{\rm ad}$  are not weakly compact. Consequently, the existence of minimizers remains unclear.  We,  thus,  make the assumption that all finite horizon problems  \ref{tag:opfin} within the receding horizon strategy  are well-posed (see Assum.~\ref{A:assum.well}).
\end{remark}

\begin{assumption}
\label{A:assum.well}
For every initial data $(\overline t_0,  \overline{y}_0) \in ( \overline\bbR_{+} ,V')$ and prediction horizon $T>0$,  the finite horizon problem~\ref{tag:opfin} admits a solution.  That is, 
\begin{align}
 J_{T}(\mathbf{u}^{\overline t_0,\overline{y}_0,*}_T;\overline t_0,\overline{y}_0):=\min_{\bfu\in\clU^{\overline t_0,T}_{\rm ad}} J_{T}(\mathbf{u};\overline t_0,\overline{y}_0).\notag
\end{align}
\end{assumption}

In Algorithm~\ref{RHA} we recall the receding horizon strategy.
\begin{algorithm}[htbp]
\caption{Receding horizon control, RHC($\delta,T$)}\label{RHA}
\begin{algorithmic}[1]
\REQUIRE{The sampling time $\delta$,  the prediction horizon $T\geq \delta$,  and the initial state $y_0$.}
\ENSURE{A stabilizing control input~$\mathbf{u}_{rh}\in\clU_{\rm ad}^{0,\infty}$.}
\STATE Set~$(\overline t_0,\overline{y}_0):=(0, y_0)$  and  $y_{rh}(0) =y_0 $;
\STATE\label{alg:Tinf} Set~$T_{\infty}=\infty$;
\WHILE{$\overline t_0<T_{\infty}$}
\STATE\label{alg:solve} ``Solve'' the  open-loop problem~\ref{tag:opfin} to find~$  \mathbf{u}^{\overline t_0,\overline{y}_0,*}_T  \in\clU_{\rm ad}^{\overline t_0,T}$;
\STATE For all $t \in [\overline t_0, \overline t_0+\delta)$,  set $\mathbf{u}_{rh}(t)= \mathbf{u}^{\overline t_0,\overline{y}_0,*}_T(t)$ and~$y_{rh}(t)=\rmY_T(\overline t_0,\overline{y}_0;\mathbf{u}^{\overline t_0,\overline{y}_0,*}_T)(t)$;
\STATE Update: $(\overline t_0,\overline{y}_0)  \leftarrow (\overline t_0+\delta,  \rmY_T(\overline t_0,\overline{y}_0;\mathbf{u}^{\overline t_0,\overline{y}_0,*}_T)(\overline t_0+\delta)$;
\ENDWHILE
\end{algorithmic}
\end{algorithm}

\begin{remark}
Note that  Step~\ref{alg:solve} in Algorithm \ref{RHA} is well-defined due to Assumption \ref{A:assum.well}.  Regarding Step~\ref{alg:Tinf} in Algorithm \ref{RHA},  in practice, for numerical simulations we will fix a finite number~$T_\infty$ for the computation time.
\end{remark}

\subsection{Stabilizability and suboptimality}\label{sS:rhc-stabsubopt}
We show that the receding horizon control provided by Algorithm \ref{RHA} is stabilizing. Further, we investigate the suboptimality of its associated cost to the optimal cost of the infinite-horizon problem.

Let us reconsider system~\eqref{sys-w-fIk}, now in the interval~$\clI_{\overline{t}_0}\coloneqq(\overline{t}_0, \overline{t}_0+T)$
\begin{align}\label{sys-w-fIdelta}
 &\dot w+Aw+A_{\rm rc}^A w=f,\qquad w(\overline{t}_0)=\overline\fkw,\qquad t\in \clI_{\overline{t}_0}.
\end{align}

We shall need the following auxiliary result.
\begin{lemma}
The solution of~\eqref{sys-w-fIdelta} satisfies
\begin{align}
\norm{w}{ L^2(\clI_{\overline{t}_0},\rmD(A))}^2   & \le D_1\left(\norm{\overline{\fkw}}{V}^2+\norm{w}{L^2(\clI_{\overline{t}_0},V)}^2+\norm{f}{L^2(\clI_{\overline{t}_0},H)}^2\right);\label{e13}\\
 \norm{\overline\fkw}{V}^2&\le D_2\norm{w}{L^2(\clI_{\overline{t}_0},\rmD(A))}^2+\norm{f}{L^2(\clI_{\overline{t}_0},H)}^2;\label{e14}
\end{align}
where  $D_1=D_1(C_{\rm rc})$ is independent of $T$, and $D_2 = D_2 (T,C_{\rm rc})$, with~$C_{\rm rc}$ as in Assumption~\ref{A:A1}.
\end{lemma}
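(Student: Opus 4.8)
The plan is to obtain both inequalities from the same energy identity that underlies Lemma~\ref{L:strg-sol-w}, but now tracking the dependence on~$T$ carefully, since the coercive term~$\norm{w}{L^2(\clI_{\overline{t}_0},\rmD(A))}^2$ must appear on the correct side in each estimate. First I would multiply the dynamics in~\eqref{sys-w-fIdelta} by~$2Aw$ in~$H$ and use Lemma~\ref{L:ArcA} together with Young's inequality to produce, exactly as in~\eqref{strg-sol-dw}, the pointwise differential relation
\begin{equation}\notag
\tfrac{\rmd}{\rmd t}\norm{w}{V}^2\le -\norm{w}{\rmD(A)}^2+2C_{\rm rc}^2\norm{w}{V}^2+2\norm{f}{H}^2.
\end{equation}
Integrating this over~$\clI_{\overline{t}_0}$ and keeping the coercive term yields an inequality of the form~\eqref{estimate_BA} but on the shifted interval, namely
\begin{equation}\notag
\norm{w(\overline{t}_0+T)}{V}^2+\norm{w}{L^2(\clI_{\overline{t}_0},\rmD(A))}^2\le\norm{\overline\fkw}{V}^2+2C_{\rm rc}^2\norm{w}{L^2(\clI_{\overline{t}_0},V)}^2+2\norm{f}{L^2(\clI_{\overline{t}_0},H)}^2.
\end{equation}
Dropping the nonnegative endpoint term~$\norm{w(\overline{t}_0+T)}{V}^2$ gives~\eqref{e13} directly, with~$D_1$ depending only on~$\max\{1,2C_{\rm rc}^2,2\}$ and hence independent of~$T$.

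For~\eqref{e14} I would instead retain~$\norm{\overline\fkw}{V}^2$ as the quantity to be bounded, so the natural route is to produce a lower bound on the time-integrated coercive term. The idea is to write~$\norm{\overline\fkw}{V}^2=\norm{w(\overline{t}_0)}{V}^2$ and control it by the average of~$\norm{w(t)}{V}^2$ over the interval plus a correction coming from~$\tfrac{\rmd}{\rmd t}\norm{w}{V}^2$. Concretely, for any~$t\in\clI_{\overline{t}_0}$ one has~$\norm{w(\overline{t}_0)}{V}^2=\norm{w(t)}{V}^2-\int_{\overline{t}_0}^t\tfrac{\rmd}{\rmd s}\norm{w}{V}^2\,\rmd s$; integrating this identity in~$t$ over~$\clI_{\overline{t}_0}$ and dividing by~$T$, then estimating the derivative term again by the differential inequality above, yields
\begin{equation}\notag
\norm{\overline\fkw}{V}^2\le\tfrac1T\norm{w}{L^2(\clI_{\overline{t}_0},V)}^2+2C_{\rm rc}^2\norm{w}{L^2(\clI_{\overline{t}_0},V)}^2+2\norm{f}{L^2(\clI_{\overline{t}_0},H)}^2,
\end{equation}
after which the embedding~$\rmD(A)\xhookrightarrow{}V$ bounds~$\norm{w}{L^2(\clI_{\overline{t}_0},V)}^2$ by~$\norm{w}{L^2(\clI_{\overline{t}_0},\rmD(A))}^2$ (up to the embedding constant), producing the desired form with~$D_2=D_2(T,C_{\rm rc})$; the factor~$\tfrac1T$ is precisely the source of the~$T$-dependence, and the~$\norm{f}{L^2}^2$ term on the right of~\eqref{e14} carries coefficient one as stated.

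The main obstacle is bookkeeping rather than conceptual: one must be careful that the coercive term~$\norm{w}{\rmD(A)}^2$ is \emph{kept} (not absorbed) when deriving~\eqref{e13}, whereas for~\eqref{e14} it must be \emph{generated} on the right-hand side, and the asymmetry in the two estimates forces two slightly different manipulations of the same energy inequality. I would also double-check that all constants are genuinely independent of~$\overline{t}_0$, which follows because Assumption~\ref{A:A1} gives a uniform-in-time bound~$C_{\rm rc}$ on~$A_{\rm rc}$ and Lemma~\ref{L:ArcA} transfers this uniformly to~$A_{\rm rc}^A$. The existence and regularity of~$w\in W(\clI_{\overline{t}_0},\rmD(A),H)$ needed to justify the multiplication by~$2Aw$ and the fundamental-theorem-of-calculus step is already supplied by Lemma~\ref{L:strg-sol-w}, so no new well-posedness work is required.
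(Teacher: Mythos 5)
Your treatment of \eqref{e13} coincides with the paper's: integrate the energy inequality \eqref{strg-sol-dw} over $\clI_{\overline{t}_0}$, keep the coercive $\rmD(A)$-term, and drop the nonnegative endpoint term, exactly as in \eqref{estimate_BA}. That part is fine.

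For \eqref{e14} there is a genuine gap in the step as written. To bound $\norm{\overline\fkw}{V}^2$ from above via the identity $\norm{w(\overline{t}_0)}{V}^2=\norm{w(t)}{V}^2-\int_{\overline{t}_0}^t\tfrac{\rmd}{\rmd s}\norm{w}{V}^2\,\rmd s$ you need an \emph{upper} bound on $-\int_{\overline{t}_0}^t\tfrac{\rmd}{\rmd s}\norm{w}{V}^2\,\rmd s$, i.e.\ a \emph{lower} bound on $\tfrac{\rmd}{\rmd s}\norm{w}{V}^2$; but the differential inequality you invoke bounds that derivative from \emph{above} and says nothing about how negative it can be. Accordingly, your displayed intermediate inequality is false: with $f=0$ and $A_{\rm rc}^A=0$ (so $C_{\rm rc}=0$) it reduces to $\norm{w(\overline{t}_0)}{V}^2\le\tfrac1T\norm{w}{L^2(\clI_{\overline{t}_0},V)}^2$, which fails for the free dynamics since $\norm{w(t)}{V}$ is strictly decreasing and its mean square over the interval therefore lies strictly below its initial value. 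The repair stays within your strategy: return to the energy identity in the first line of \eqref{strg-sol-dw} and estimate the \emph{negative} of the derivative, $-\tfrac{\rmd}{\rmd t}\norm{w}{V}^2=2\norm{w}{\rmD(A)}^2+2(A_{\rm rc}^Aw,Aw)_H-2(f,Aw)_H\le 4\norm{w}{\rmD(A)}^2+C_{\rm rc}^2\norm{w}{V}^2+\norm{f}{H}^2$. Feeding this into your averaged identity necessarily produces a $\norm{w}{L^2(\clI_{\overline{t}_0},\rmD(A))}^2$ contribution with a positive coefficient on the right-hand side---which is harmless, being precisely the quantity \eqref{e14} places there---and after embedding the $V$-terms into $\rmD(A)$-terms you obtain \eqref{e14} with $D_2$ of the form $c_1T^{-1}+c_2(C_{\rm rc})$ and coefficient one on $\norm{f}{L^2(\clI_{\overline{t}_0},H)}^2$, as stated. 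Once repaired, your route is genuinely different from the paper's, which instead tests the equation satisfied by the cutoff $\widehat w\coloneqq\tfrac{T+\overline{t}_0-t}{T}w$ (vanishing at the right endpoint) so that time integration isolates $-\norm{\overline\fkw}{V}^2$ directly; the two devices are both ways of trading the initial value for an integral over the interval, and the $T^{-1}$ in $D_2$ arises in both.
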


\begin{proof}
Proceeding as in Lemma \ref{L:strg-sol-w} (see \eqref{estimate_BA}),  we have that~\eqref{e13} holds true.
To show~\eqref{e14}, we start by introducing~$\widehat w\coloneqq \frac{T+\overline{t}_0-t}{T}w$, for which we find that
\begin{align}\notag
 &\dot{\widehat w}+A{\widehat w}+A_{\rm rc}^A {\widehat w}=\tfrac{T+\overline{t}_0-\Bigcdot}{T}f-\tfrac{1}{T}w,\qquad \widehat w(\overline{t}_0)=\overline\fkw,\qquad \widehat w(\overline{t}_0+T)=0,
\end{align}
and by testing the dynamics with~$2A\widehat w$ gives
\begin{align}
 &\tfrac{\rmd}{\rmd t}\norm{\widehat w}{V}+2\norm{\widehat w}{\rmD(A)}+2( A_{\rm rc}^A {\widehat w}, {\widehat w})_H=2(\tfrac{T+\overline{t}_0-\Bigcdot}{T}f-\tfrac{1}{T}w, {\widehat w})_H,\qquad \widehat w(\overline{t}_0)=\overline\fkw. \notag
\end{align}
Subsequently, time integration leads to
\begin{align}
 &-\norm{\overline\fkw}{V}+2\norm{\widehat w}{L^2(\clI_{\overline{t}_0},\rmD(A))}+2( A_{\rm rc}^A {\widehat w}, {\widehat w})_{L^2(\clI_{\overline{t}_0},H)}=2(\tfrac{T+\overline{t}_0-\Bigcdot}{T}f-\tfrac{1}{T}w, {\widehat w})_{L^2(\clI_{\overline{t}_0},H)}.\notag
\end{align}
Using  the Young inequality, and the fact that~$\rmD(A)\xhookrightarrow{}V\xhookrightarrow{}H$, we can obtain
\begin{align}
 &\norm{\overline\fkw}{V}^2\le C_1\norm{\widehat w}{L^2(\clI_{\overline{t}_0},\rmD(A))}^2+\norm{f}{L^2(\clI_{\overline{t}_0},H)}^2 +\tfrac{1}{T}\norm{w}{L^2(\clI_{\overline{t}_0},H)}^2,\notag
\end{align}
which implies~\eqref{e14} with  $D_2:=C_1+\tfrac{1}{T}$.
\end{proof}

Now we are in the position to investigate the stabilizability and suboptimality of Algorithm \ref{RHA}.  Beforehand,  we need to  define the finite- and infinite-horizon value functions.
\begin{definition}
For any $y_0 \in V' $  the infinite horizon value function $V_{\infty}$ is defined by
\begin{equation*}
 V_{\infty}(y_0)\coloneqq \inf_{\bfu\in\clU^{0,\infty}_{\rm ad}} J_{\infty}(\mathbf{u};  0, y_0).
\end{equation*}
Similarly, for every initial vector  $(\overline{t}_0,  \overline{y}_0) \in \overline\bbR_+ \times V'$ and every prediction horizon $T>0$,  the finite horizon value function $V_{T}$ is defined by
\begin{equation*}
V_{T}(\overline{t}_0,\overline{y}_0) \coloneqq \min_{\bfu\in\clU^{\overline{t}_0,T}_{\rm ad}} J_{T}(\mathbf{u};\overline{t}_0,\overline{y}_0).
\end{equation*}
\end{definition}

\begin{theorem}
\label{subopth}
Suppose that the assumptions of Theorem \ref{T:main-Intro-swi} and Assumption \ref{A:assum.well} hold true. Then, for every given  sampling time $\delta>0$, there exist numbers $T^* > \delta$, and $\alpha \in (0,1)$,  such that for every fixed  prediction horizon $T \geq T^*$,  the receding horizon pair  $(y_{rh} ,  \mathbf{u}_{rh})$ obtained from Algorithm \ref{RHA} satisfies for every $y_0 \in V'$ the suboptimality inequality
\begin{equation}
\label{ed27}
\alpha V_{\infty}(y_0) \leq \alpha J_{\infty}(\mathbf{u}_{rh};0,y_0) \leq V_{\infty}(y_0)
\end{equation}
 and  the  exponential stability
\begin{equation}
\label{ed28}
|y_{rh}(t,\cdot )|^2_{V'} \leq C_{rh}e^{-\zeta t}|y_0|^2_{V'} \quad  \text{ for }  t\ge 0,
\end{equation}
where the positive numbers $\zeta$ and  $C_{rh}$  depend on $\alpha$, $\delta$, and $T$, but are independent of $y_0$.
\end{theorem}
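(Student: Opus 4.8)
The plan is to follow the now-standard receding-horizon suboptimality argument (as in~\cite{AzmiKunRod21,AzmiKunRod23}), whose engine is a comparison between the finite-horizon value function and the cost of a known stabilizing control. First I would establish the key ingredient: a \emph{controllability-type bound} stating that there exists a constant $C_\infty$, independent of $\overline t_0$ and $\overline y_0$, such that $V_\infty(\overline t_0,\overline y_0)\le C_\infty\norm{\overline y_0}{V'}^2$ (equivalently, in the $w$-variables, $\le \widetilde C_\infty\norm{\overline\fkw}{V}^2$). This is exactly where Theorem~\ref{T:main-switch-asy} is used: the concatenated switching control it produces lies in $\clU^{0,\infty}_{\rm ad}$ (only one actuator active at a time), drives $\norm{w(t)}{V}$ down exponentially, and has control magnitude controlled by $\norm{\fkw_0}{V}$; plugging this admissible control into $J_\infty$ and using the exponential decay plus the $L^2$ control bound gives finite cost, hence the value-function bound. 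Crucially, because the rate $\mu$ in Theorem~\ref{T:main-switch-asy} can be made as large as desired (Section~\ref{sS:satAMstaticAct}), the constant $C_\infty$ can be taken as small as we like by taking $\mu$ large.

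Next I would derive the \emph{dynamic programming / Bellman-type inequalities} relating $V_T$, $V_\delta$, and $V_\infty$. The standard chain is: (i) by optimality and the tail estimate, $V_T(\overline t_0,\overline y_0)\le V_\infty(\overline t_0,\overline y_0)\le C_\infty\norm{\overline y_0}{V'}^2$; and (ii) a ``value-function decrease'' estimate along the RHC trajectory, of the form
\begin{equation}\notag
V_T(\overline t_0,\overline y_0)\ge \int_{\overline t_0}^{\overline t_0+\delta}\ell(y_{rh},\mathbf u_{rh})\,\rmd t + V_{T-\delta}(\overline t_0+\delta, y_{rh}(\overline t_0+\delta)),
\end{equation}
where $\ell(y,\mathbf u)\coloneqq\frac12(\norm{y}{H}^2+\beta\norm{\mathbf u}{\ell^2}^2)$ is the running cost. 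Combining these with the bound $V_{T-\delta}\le V_T$ plus a quantified gap $V_T-V_{T-\delta}\le \eta(T)\,C_\infty\norm{\cdot}{V'}^2$ with $\eta(T)\to0$ as $T\to\infty$ (this is where the tail of the exponentially small optimal cost over $(\overline t_0+T-\delta,\overline t_0+T)$ is used), yields the relaxed dynamic programming inequality
\begin{equation}\notag
V_T(\overline t_0,\overline y_0)\ge \alpha\int_{\overline t_0}^{\overline t_0+\delta}\ell(y_{rh},\mathbf u_{rh})\,\rmd t + V_T(\overline t_0+\delta, y_{rh}(\overline t_0+\delta))
\end{equation}
for some $\alpha\in(0,1)$, provided $T\ge T^*$ is large enough. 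Summing this telescoping inequality over the RHC steps gives the suboptimality estimate~\eqref{ed27}, since $\sum_k\int_{k\delta}^{(k+1)\delta}\ell = J_\infty(\mathbf u_{rh};0,y_0)$ and the boundary terms $V_T(\overline t_0,\cdot)\ge0$ vanish in the limit.

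Finally, for the exponential stability~\eqref{ed28}, I would exploit that the summed inequality forces $\sum_k V_T(k\delta,y_{rh}(k\delta))$ to be finite and, more precisely, that $V_T(\overline t_0+\delta,y_{rh}(\overline t_0+\delta))\le (1-c)V_T(\overline t_0,y_{rh}(\overline t_0))$ for some fixed $c\in(0,1)$ — using coercivity of $V_T$ from below, which must be established via Lemma in the preceding subsection: the lower bound~\eqref{e14} together with the running-cost term controls $\norm{\overline\fkw}{V}^2$ (hence $\norm{\overline y_0}{V'}^2$) by the finite-horizon cost, giving $V_T(\overline t_0,\overline y_0)\ge c_0\norm{\overline y_0}{V'}^2$. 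The two-sided bound $c_0\norm{y_{rh}(k\delta)}{V'}^2\le V_T(k\delta,y_{rh}(k\delta))\le C_\infty\norm{y_{rh}(k\delta)}{V'}^2$ combined with the geometric decrease of $V_T$ along sampling instants yields decay of $\norm{y_{rh}(k\delta)}{V'}^2$ at a geometric rate; interpolating over each sampling interval via the a priori estimate of Lemma~\ref{L:strg-sol-w} (transferred back from the $w$- to the $y$-variable) upgrades this to the continuous-time bound~\eqref{ed28} with $\zeta,C_{rh}$ depending only on $\alpha,\delta,T$.

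The main obstacle I anticipate is \textbf{coercivity of the value function $V_T$ from below} in the presence of the nonconvex cardinality constraint, i.e.\ establishing $V_T(\overline t_0,\overline y_0)\ge c_0\norm{\overline y_0}{V'}^2$ uniformly. The usual observability/coercivity arguments are clean for convex admissible sets, whereas here $\clU^{\overline t_0,T}_{\rm ad}$ is neither convex nor weakly closed, so one cannot invoke weak lower semicontinuity freely. The workaround is that the lower bound~\eqref{e14} is \emph{purely an energy estimate for the state equation} and holds for \emph{any} admissible control regardless of the constraint; thus it transfers the running-cost control of $\norm{w}{L^2(\rmD(A))}^2$ and $\norm{f}{L^2(H)}^2$ into a bound on $\norm{\overline\fkw}{V}^2$, and the cardinality constraint only shrinks the feasible set (which can only \emph{raise} $V_T$), so the lower bound is unaffected. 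Making this observation precise — and checking that the existence of minimizers assumed in Assumption~\ref{A:assum.well} suffices to run the dynamic programming recursion without needing weak compactness — is the delicate point of the argument.
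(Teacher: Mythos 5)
Your plan is correct and follows essentially the same route as the paper: the paper simply cites the abstract relaxed-dynamic-programming result of Azmi--Kunisch (Thm.~2.6 there) and verifies its three hypotheses --- well-posedness of the finite-horizon problems (Assumption~\ref{A:assum.well}), the upper bound $V_T(\overline t_0,\overline y_0)\le\gamma_2(T)\norm{\overline y_0}{V'}^2$ obtained by inserting the admissible switching control from Theorem~\ref{T:main-Intro-swi} (after a time shift) into the cost via the energy estimate~\eqref{e13}, and the coercivity $V_T(\overline t_0,\overline y_0)\ge\gamma_1(T)\norm{\overline y_0}{V'}^2$ from the backward energy estimate~\eqref{e14}, exactly the three ingredients your plan isolates (including your correct observation that~\eqref{e14} holds for arbitrary controls so the cardinality constraint cannot spoil the lower bound). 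The only inessential blemish is your side remark that $C_\infty$ can be made arbitrarily small by enlarging $\mu$ --- the constants $C$, $C_u$ in Theorem~\ref{T:main-Intro-swi} themselves depend on $\mu$, so this is unjustified, but it is not needed: boundedness of $\gamma_2$ and positivity of $\gamma_1$ suffice for the telescoping argument.
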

\begin{proof}
The proof follows with similar arguments as in~\cite[Proof of Thm.~2.6]{AzmiKunisch19}.  Therefore, we omit the details and restrict ourselves to the verification of the sufficient conditions/properties  P1--P3 in  \cite[Thm.~2.6]{AzmiKunisch19} with respect to the~$V'$-norm in place of the $H$-norm.
\begin{enumerate}[noitemsep,topsep=5pt,parsep=5pt,partopsep=0pt,leftmargin=0em]
\renewcommand{\theenumi}{{P\sf \arabic{enumi}}} 
 \renewcommand{\labelenumi}{}
 
 \item\theenumi:\label{P1} {\em For every $(\overline{t}_0, \overline{y}_0) \in \overline\bbR_+\times V'$ and $T>0$,  every finite horizon optimal control problem of the form~\ref{tag:opfin} admits a solution: }

Property~\ref{P1}  follows by Assumption \ref{A:assum.well}.

 \item\theenumi:\label{P2} {\em For every~$T>0$, $V_T$ is globally decrescent with respect to the $V'$-norm. That is, there exists a continuous,  non-decreasing,  and bounded function $\gamma_2: \mathbb{R}_+ \to \mathbb{R}_+$ such that
\begin{equation}
\label{e1a}
V_T(\overline{t}_0, \overline{y}_0) \le  \gamma_2(T)|\overline{y}_0|^2_{V'}  \qquad  \text{ for all } (\overline{t}_0, \overline{y}_0) \in \overline\bbR_{+}\times V',
\end{equation}
with~$\gamma_2$ independent of~$(\overline{t}_0,\overline{y}_0)$.}

To address~\ref{P2},  we first show that for every given $(\overline{t}_0, \overline{y}_0)$,  there exists a stabilizing control~$\mathbf{u}(\overline{y}_0) \in \mathcal{U}^{ \overline{t}_0, \infty}_{ad}$.   We start by observing that by shifting time, $t\coloneqq s+\overline{t}_0$, and denoting~$\widehat g(s)\coloneqq g(s+\overline{t}_0)$ we find that~$y=y(t)$ solves
\begin{align}\label{sys-y-parab-Mpc}
\tfrac{\rmd}{\rmd t} { y}+A{  y}+A_{\rm rc}(t){y}= {u(t)}
 \deltafun_{{c}(t)},\quad y(\overline{t}_0)= \overline{y}_0,\quad t>\overline{t}_0,
\end{align}
if, and only if,~$\widehat y=\widehat y(s)$ solves
\begin{align}\label{sys-y-parab-Mpc-shifted}
\tfrac{\rmd}{\rmd s} {\widehat y}+A{ \widehat y}+\widehat A_{\rm rc}(s){ \widehat y}= { \widehat u}(s)
 \deltafun_{ \widehat{c}(s)},\quad \overline y(0)= \overline{y}_0,\quad s>0.
\end{align}

In Section~\ref{sS:satA0sp-A1} we have addressed the satisfiability of Assumption~\ref{A:A1} for the concrete reaction-convection operator~$A_{\rm rc}=a\Id+b\cdot\nabla$. The same assumption is still satisfied by~$\widehat A_{\rm rc}$, since~$\norm{\widehat A_{\rm rc}}{L^\infty(\bbR_+,\clL(H,V'))}\eqqcolon\widehat C_{\rm rc}\le C_{\rm rc}$. Therefore,  Theorem~\ref{T:main-Intro-swi} will hold true with the same set of actuators. As a consequence, it follows that there exist a magnitude control function~$ u\in L^2((\overline{t}_0,+\infty),\bbR)$, and a piecewise constant path $ c(t)\in\bfx$  for which the associated solution of~\eqref{sys-y-parab-Mpc} satisfies
 \begin{align}
 &\norm{y(t,\Bigcdot)}{V'}\le C\ex^{-\mu (t-\overline{t}_0)}\norm{\overline{y}_0}{V'}\quad\mbox{and}\quad \norm{u(t)}{L^2((\overline{t}_0,+\infty),\bbR)}\le C_u\norm{ \overline{y}_0}{V'}, \label{rhc-timeshift-yu}
\end{align}
 for all~$t\ge \overline{t}_0$, 
where the constants~$C$ and~$C_u$ can be taken as in Theorem \ref{T:main-Intro-swi}, thus independently of~$(\overline{t}_0,\overline{y}_0)$.

Now,  by defining the vector input~$\bfu(t) =  (\bfu_1(t) ,\dots,\bfu_M(t) )$ as
\begin{equation}
\bfu_j(t) =\begin{cases} u(t),  & \text{ if  }\; {c}(t)  =x^j; \\0    & \text{ otherwise};
\end{cases}\qquad \mbox{for}\quad t\ge \overline{t}_0,
\end{equation}
we   see that
\begin{equation}
\sum^{M}_{j =1} \mathbf{u}_i(t) \deltafun_{x^j}  =  u(t)\deltafun_{ \hat{c}(t)}  \quad \text{ for }   t \geq \overline{t}_0.
\end{equation}
We can conclude that  $ \mathbf{u} \in \mathcal{U}^{\overline{t}_0,\infty}_1$ with
\begin{equation}
\label{e2a}
 \norm{{\mathbf{u}}}{L^2((\overline{t}_0,\infty);\mathbb{R}^M)}^2 = \norm{u( \overline{y}_0)}{L^2((\overline{t}_0,+\infty),\bbR)}^2\le C^2_u\norm{ \overline{y}_0}{V'}^2.
\end{equation}

Recalling the notation~$A_{\rm rc}^A=A^{-1}A_{\rm rc}A\in\clL(\rmD(A),V)$, for~$w\coloneqq A^{-1}y$ satisfying \eqref{sys-w-fIdelta} with  $\overline{\fkw}\coloneqq A^{-1}\overline{y}_0$ and $f\coloneqq \sum^{M}_{j =1} \mathbf{u}_jA^{-1} \deltafun_{x^j} $,  and using \eqref{e13} and \eqref{e2a},  we can write for \eqref{conrol_sys_b} that 
\begin{equation*}
\begin{split}
\norm{y}{ L^2(\clI_{\overline{t}_0},H)}^2  & \leq  D_1\left(|y(\overline{t}_0)|^2_{V'} + |y|^2_{L^2(\clI_{\overline{t}_0};V')} + \norm{\sum^{M}_{j =1} \mathbf{u}_j \deltafun_{x^j}}{L^2(\clI_{\overline{t}_0};D(A)')}^2 \right)\\
&\leq D_1\left( |\overline{y}_0|^2_{V'} + |y|^2_{L^2(\clI_{\overline{t}_0};V')} +M  \max_{1\leq  j \leq M  } \norm{\deltafun_{x^j}}{D(A)'}^2 \norm{\mathbf{u}}{L^2(\clI_{\overline{t}_0};\mathbb{R}^M)}^2  \right)\\
&\le D_1D_3(T)\norm{ \overline{y}_0}{V'}^2,\qquad\mbox{with}\quad D_3(T)\coloneqq \left(1+\tfrac{C^2(1-e^{-2\mu T})}{2\mu}+M\max_{1\leq  j \leq M  } \norm{\deltafun_{x^j}}{D(A)'}^2C_u^2\right).
\end{split}
\end{equation*}
Hence, necessarily the optimal (minimal) cost satisfies
\begin{equation*}
\begin{split}
V_T(\overline{t}_0, \overline{y}_0) &\leq J_{T}(\mathbf{u};\overline{t}_0, \overline{y}_0) =\tfrac12 \norm{y}{L^2(\clI_{\overline{t}_0}; H)}^2 +\tfrac \beta2 \norm{\mathbf{u}}{L^2(\clI_{\overline{t}_0}; \mathbb{R}^M)}^2 \\& \le  \tfrac12\left(D_1D_3(T)+  \beta C^2_{u} \right)\norm{ \overline{y}_0}{V'}^2=: \gamma_2(T) \norm{ \overline{y}_0}{V'}^2,
\end{split}
\end{equation*}
which implies ~\eqref{e1a}.

\item\theenumi:\label{P3}
 {\em For every $T>0$,  $V_T$ is uniformly positive with respect to the $V'$-norm. In other words,  for every $T>0$ there exists a constant $\gamma_1(T)>0$, independent of~$(\overline{t}_0, \overline{y}_0)$, such that 
\begin{equation}
\label{e8}
V_T(\overline{t}_0, \overline{y}_0) \geq  \gamma_1(T) \norm{\overline{y}_0}{V'}^2  \qquad  \text{ for all } (\overline{t}_0, \overline{y}_0) \in  \overline\bbR_+\times V'
\end{equation}
}

To verify property~\ref{P3}, we use again the fact that~$w\coloneqq A^{-1}y$ satisfies~\eqref{sys-w-fIdelta}  with  $\overline{\fkw}\coloneqq A^{-1}\overline{y}_0$ and $f\coloneqq \sum^{M}_{j =1}  \mathbf{u}_jA^{-1} \deltafun_{x^j} $ for an arbitrary $\mathbf{u} = (\mathbf{u}_1, \dots, \mathbf{u}_M) \in L^2(\clI_{\overline{t}_0}, \mathbb{R}^{M})$.  Then by~\eqref{e14}  we find
\begin{align}\label{e3}
&\norm{\overline{y}_0}{V'}^2  \leq  D_2\norm{y}{ L^2(\clI_{\overline{t}_0}, H)}^2+\norm{\sum^{M}_{j =1} \mathbf{u}_j(t) \deltafun_{x^j}}{ L^2(\clI_{\overline{t}_0},D(A)')}^2\\
& \leq  D_2\norm{y}{ L^2(\clI_{\overline{t}_0}, H)}^2+M  \max_{1\le j \le M  } \norm{\deltafun_{x^j}}{\rmD(A)'}^2 \norm{\bfu}{ L^2(\clI_{\overline{t}_0},  \mathbb{R}^M  )}^2\le \gamma_1(T)J_{T}(\bfu;\overline{t}_0,\overline{y}_0),\\
&\mbox{with}\quad\gamma_1(T) \coloneq \frac12 \left( \max\left\{ D_2,  \tfrac{M  \max\limits_{1\leq  j \leq M  } \norm{\deltafun_{x^j}}{\rmD(A)'}^2}{\beta}\right \} \right)^{-1}.
\end{align}
In particular, by taking a minimizer~$\mathbf{u}^{k,\overline{y}_0,*}_T$ as in Assumption~\ref{A:assum.well}, we obtain that
\begin{equation*}
\gamma_1(T) \norm{\overline{y}_0}{V'}^2 \leq J_T(\mathbf{u}^{k,\overline{y}_0,*}_T;\overline{t}_0, \overline{y}_0)=V_T(\overline{t}_0, \overline{y}_0),
\end{equation*}
which gives us~\eqref{e8}.
\end{enumerate}
\end{proof}

\section{Numerical simulations}\label{S:num_impl}
To illustrate the theoretical findings we present  numerical experiments  utilizing  Algorithm~\ref{RHA} applied to  an unstable parabolic equation.   Specifically,   we address the infinite-horizon problem~\ref{tag:opinf} on the spatial domain  $\Omega := (0,1)^2 \subset \mathbb{R}^2$ with homogeneous Neumann boundary conditions,  that is,  $\clG=\clG_{\rm Neu}=\bfn\cdot\nabla$. 

For all numerical experiments, we employed a conforming linear finite element scheme utilizing continuous piecewise linear basis functions over a uniform triangulation for spatial discretization as depicted in Fig.~\ref{Fig:meshAct},
 \begin{figure}[htbp]
    \centering
 \subfigure
    {\includegraphics[height=0.4\textwidth,width=0.4\textwidth]{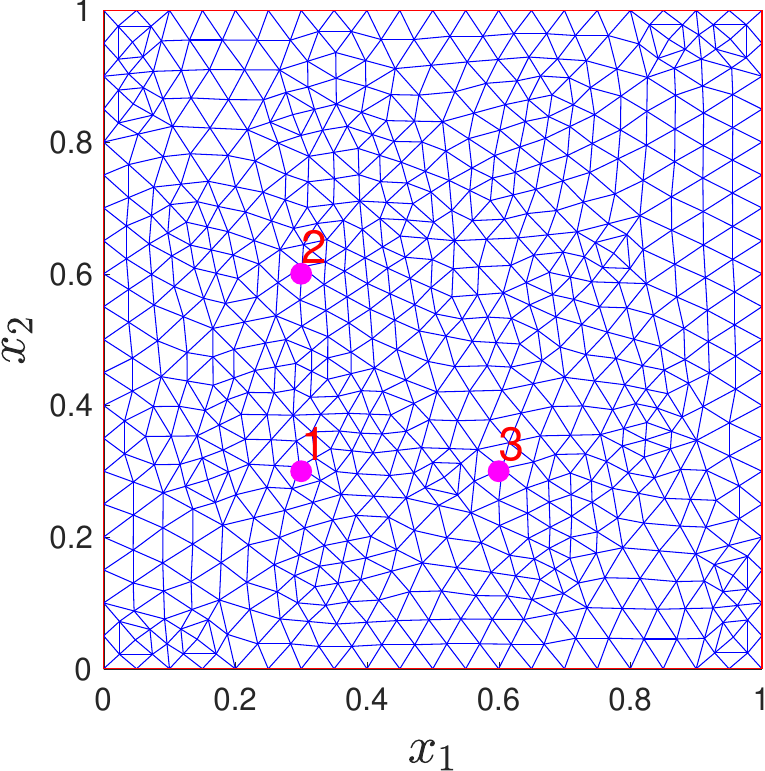}}
    \qquad
    \subfigure
    {\includegraphics[height=0.4\textwidth,width=0.4\textwidth]{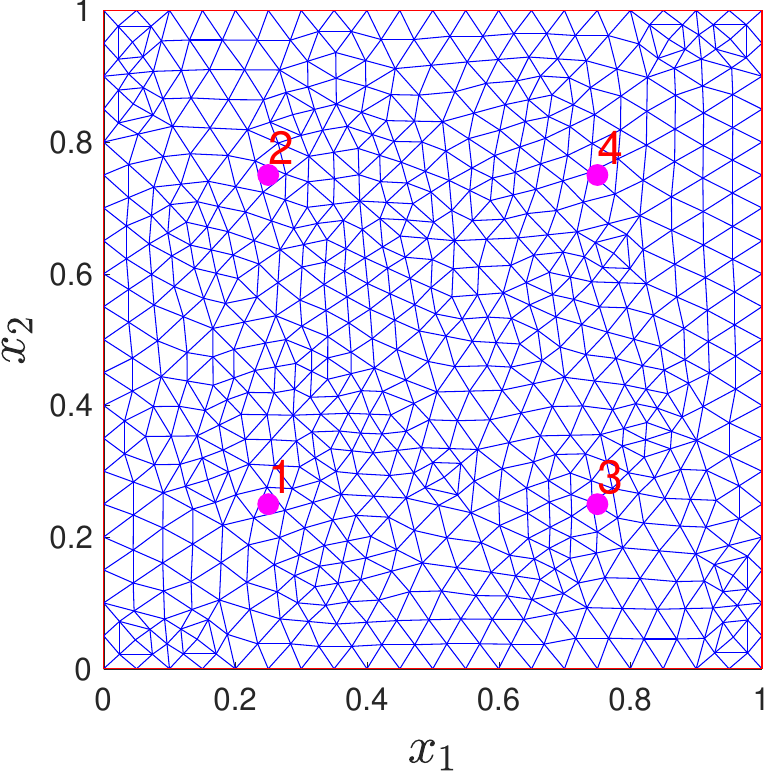}}\\
    \subfigure
    {\includegraphics[height=0.4\textwidth,width=0.4\textwidth]{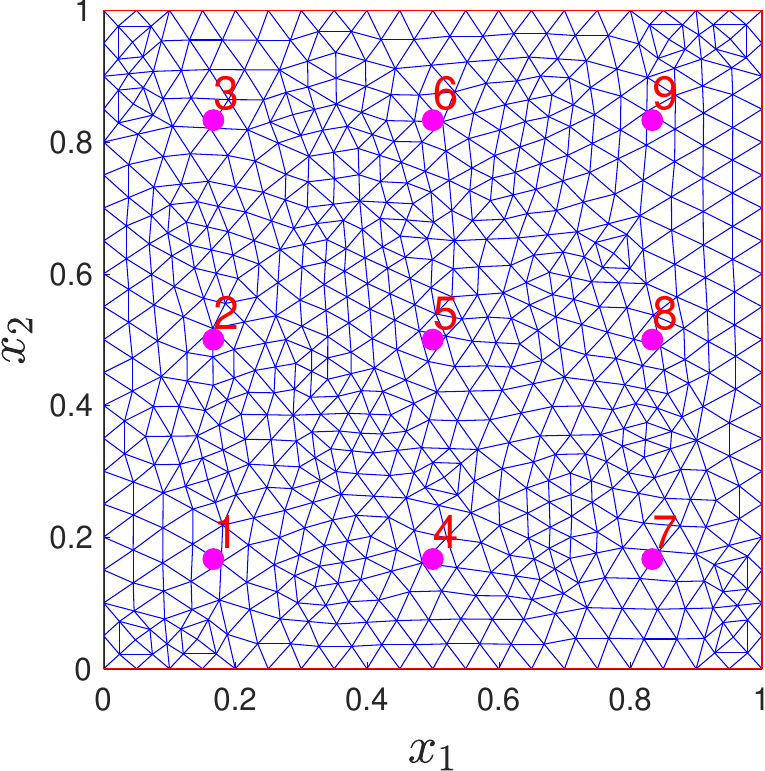}}
    \qquad
    \subfigure
    {\includegraphics[height=0.4\textwidth,width=0.4\textwidth]{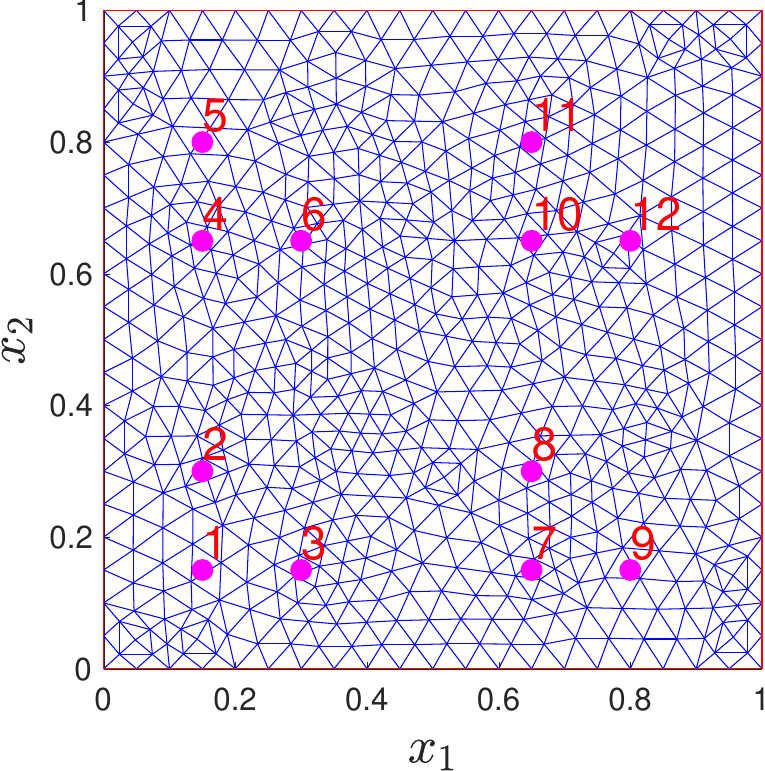}}
     \caption{Placement of the $M\in\{3,4,9,12\}$ Dirac actuators.}
     \label{Fig:meshAct}
\end{figure}
where we also show the four scenarios for the locations of the point actuators we shall consider in the simulations.

 The resulting ordinary differential equations from spatial discretization were solved numerically using the Crank-Nicolson time-stepping method with step-size of $\Delta t = 5 \times 10^{-3}$.  Additionally,   we set $\nu = 0.1$,  $\beta = 5 \times 10^{-4} $ and for $x := (x_1,x_2) \in \mathbb{R}^2$, we chose
\begin{equation}\notag
 a(t,x):=-2 + (2-x_1)\cos(\pi x_2) -0.2|\sin(t+x_2)|, \quad b(t,x):= \binom{\frac{t+2}{t+1}\left(x_1(x_1-1)x_2 \right) }{ -(x_1-0.5)x_2(x_2-1)\cos(t)},
\end{equation}
and $y_0(x):=x_1(1+\sin(2x_2))$. 

For all the experiments, in Algorithm~\ref{RHA} we have set $T = 1$ as the prediction horizon and $\delta = 0.25$ as the sampling time.  

To simplify the exposition, let us fix~$(\overline{t}_0,\overline{y}_0)$ and write~\ref{tag:opfin} as
\begin{align}\notag
&\clF(\mathbf{u}^*)\quad\longrightarrow\quad\min_{\bfu\in\clU^{\overline{t}_0,T}_{\rm ad}} \clF(\bfu),\qquad\mbox{with}\quad\mathcal{F}(\mathbf{u}) \coloneqq J_{T}(\mathbf{u};   \overline{t}_0, \overline{y}_0).
\end{align}
To tackle the problem,  we employed the proximal gradient method incorporated with the non-monotone line search, as outlined in \cite{AzmiBern2023}, leading to the iteration
  \begin{equation}\label{proj_Ik}
\mathbf{u}_{k + 1} \in  \proj_{\mathcal{U}^{\overline{t}_0,T}_{\rm ad}} \left(\mathbf{u}_k - \frac{1}{\alpha_k}\nabla \mathcal{F}(\mathbf{u}_k)\right).
\end{equation}
Here, $\proj_{\mathcal{U}_{\rm ad}^{\overline{t}_0, T}}$ denotes the orthogonal projection onto~$\mathcal{U}_{\rm ad}^{\overline{t}_0, T}$ defined pointwise in time as
\begin{equation}\label{proj0}
\mathbf{u}_{k + 1}(t) \in  \proj_{\mathcal{\overline{U}}_{\rm ad}} \left(\mathbf{u}_k(t) - \frac{1}{\alpha_k}\nabla \mathcal{F}(\mathbf{u}_k)(t)\right),
\end{equation}
where  $\mathcal{\overline{U}}_{\rm ad}\coloneqq \{  \mathbf{x} \in \mathbb{R}^{M} \mid\;  \norm{ \mathbf{x} }{0} \leq 1 \}$  and  and $\proj_{\mathcal{\overline{U}}_{\rm ad}}\colon\bbR^M\to\bbR^M$
is defined as follows
\begin{equation}\label{rhc-u-vec}
w=\proj_{\mathcal{\overline{U}}_{\rm ad}}v,\qquad w_j\coloneqq\begin{cases}
v_j,\quad&\mbox{if } j=j_v\coloneqq\min\{  i \in \{1, \dots,M\} \mid  \norm{v_i}{\bbR}=\norm{v}{\ell^\infty} \};\\
0,\quad&\mbox{otherwise };
\end{cases}
\end{equation}
where~$\norm{v}{\ell^\infty}\coloneqq\max\limits_{1\le i\le M}\norm{v_i}{\bbR}$. Furthermore,  to accelerate the proximal gradient method, we utilized the Barzial Borwein (BB) stepsizes \cite{BB}. Specifically, the stepsize $\alpha_k$ was computed by a non-monotone line search algorithm \cite{AzmiBern2023}, which employs the BB stepsize corresponding to the smooth part $\mathcal{F}$ as the initial trial stepsize. See \cite{AzmiKunisch22, AzmiKunisch20} for more details. The optimization algorithm was terminated based on the following condition
\begin{equation*}
\alpha_k \norm{\mathbf{u}_{k + 1}- \mathbf{u}_{k}}{L^2(\clI_{\overline{t}_0},\mathbb{R}^{M})} \leq 10^{-5}
\end{equation*}
All computations were carried out on the MATLAB platform.  Further,  for every  RHC $\mathbf{u}_{rh}$ obtained  by Algorithm \ref{RHA},  we find the corresponding switching control by
\begin{equation}\label{rhc-uc}
u_{rh}(t)=\sign({\mathbf{u}_{rh}(t)}_{j_{\mathbf{u}_{rh}(t)}})\norm{\mathbf{u}_{rh}(t)}{\ell^\infty}\quad\mbox{and}\quad c(t)=\delta_{x^{j_{\mathbf{u}_{rh}(t)}}},
\end{equation}
where  $j_{\mathbf{u}_{rh}(t)} \in \{1, \dots, M\} $ stands for the index corresponding the nonzero component of $\mathbf{u}_{rh}(t)$, as defined in \eqref{rhc-u-vec}.    
Based on the number and placement of the point actuators, we provide the following two examples.

\subsection{On the number and location of the actuators}
\label{exp1}
First of all,  we observe that free dynamics is unstable, as  illustrated in Fig.~\ref{Fig:free-dyn}, where the norm of the uncontrolled state is exponentially increasing.
\begin{figure}[htbp]
    \centering
    \subfigure[Free dynamics.\label{Fig:free-dyn}]
         {\includegraphics[height=0.35\textwidth,width=0.49\textwidth]{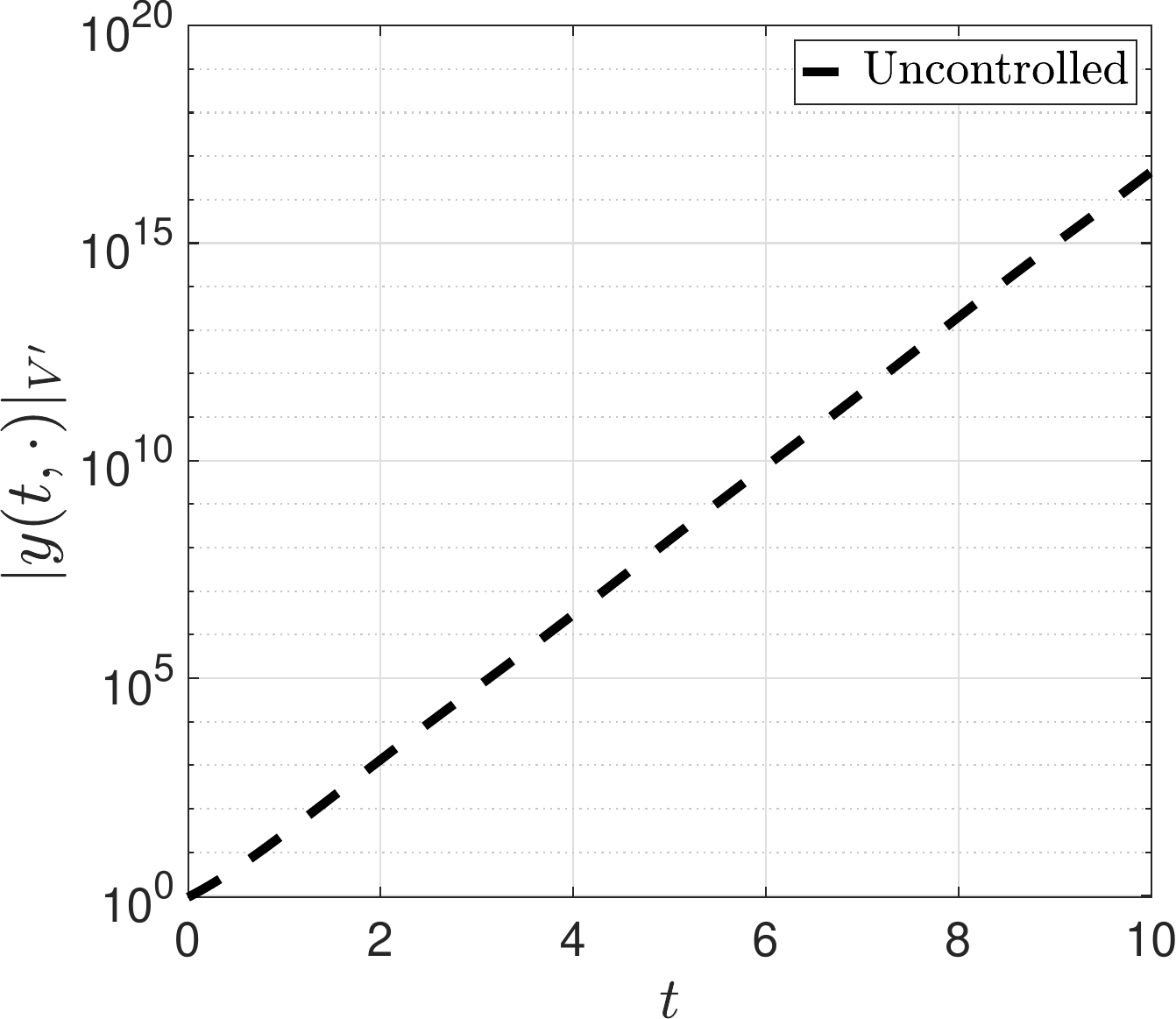}}
   \subfigure[Controlled dynamics for several~$M$. \label{Fig:stabil_all}]
{\includegraphics[height=0.35\textwidth,width=0.49\textwidth]{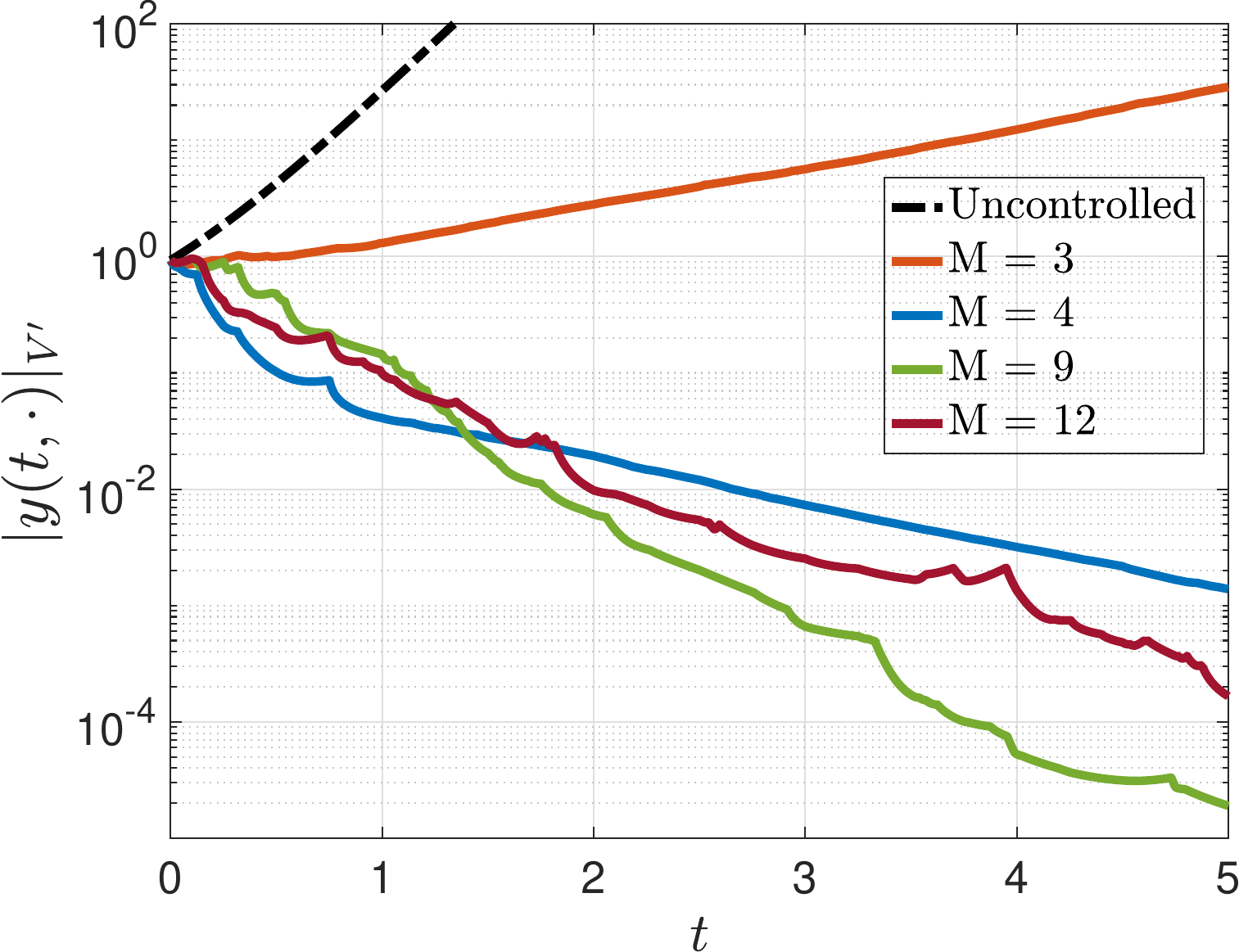}}
 \caption{Evolution of the $V'$-norm}
 \label{Fig:norm_evol}
\end{figure}
Next, we consider the cases of~$M\in\{3,4,9,12\}$ point actuators placed as in Fig.~\ref{Fig:meshAct}. The cases~$M\in\{3,12\}$ are inspired by the sequence of families of actuators leading to the nonswitching stabilizability result in~\cite{KunRodWal24-cocv} (see~\cite[Fig.~1]{KunRodWal24-cocv}) that we used to derive the switching stabilizability result in Theorem~\ref{T:main-Intro-swi}. The cases~$M\in\{4,9\}$ are rather inspired in the sequence proposed, for example, in~\cite[Fig.~2]{AzmiKunRod23}, which guarantees the nonswitching stabilizability result in the case the actuators are/were indicator functions.
In~Fig.~\ref{Fig:stabil_all},  we see that the switching receding horizon control (RHC) is not stabilizing in case~$M=3$, whereas it is in cases~$M\in\{4,9,12\}$.   Fig.~\ref{Fig:swicontrol3-12} depicts the switching pattern~$c(t)$ of the corresponding controls,  recall that~$c(t)=\deltafun_{x^j}$ if~$\bfu_j(t)\ne0$; see~\eqref{rhc-u-vec} and~\eqref{rhc-uc}. Note that, Fig.~\ref{Fig:swicontrol3-12} confirms that, in all cases~$M\in\{3,4,9,12\}$, no more than~$1$ Dirac is active at any given time instant.
 \begin{figure}[htbp]
    \centering
   \subfigure
    {\includegraphics[height=.3\textwidth,width=.9\textwidth]{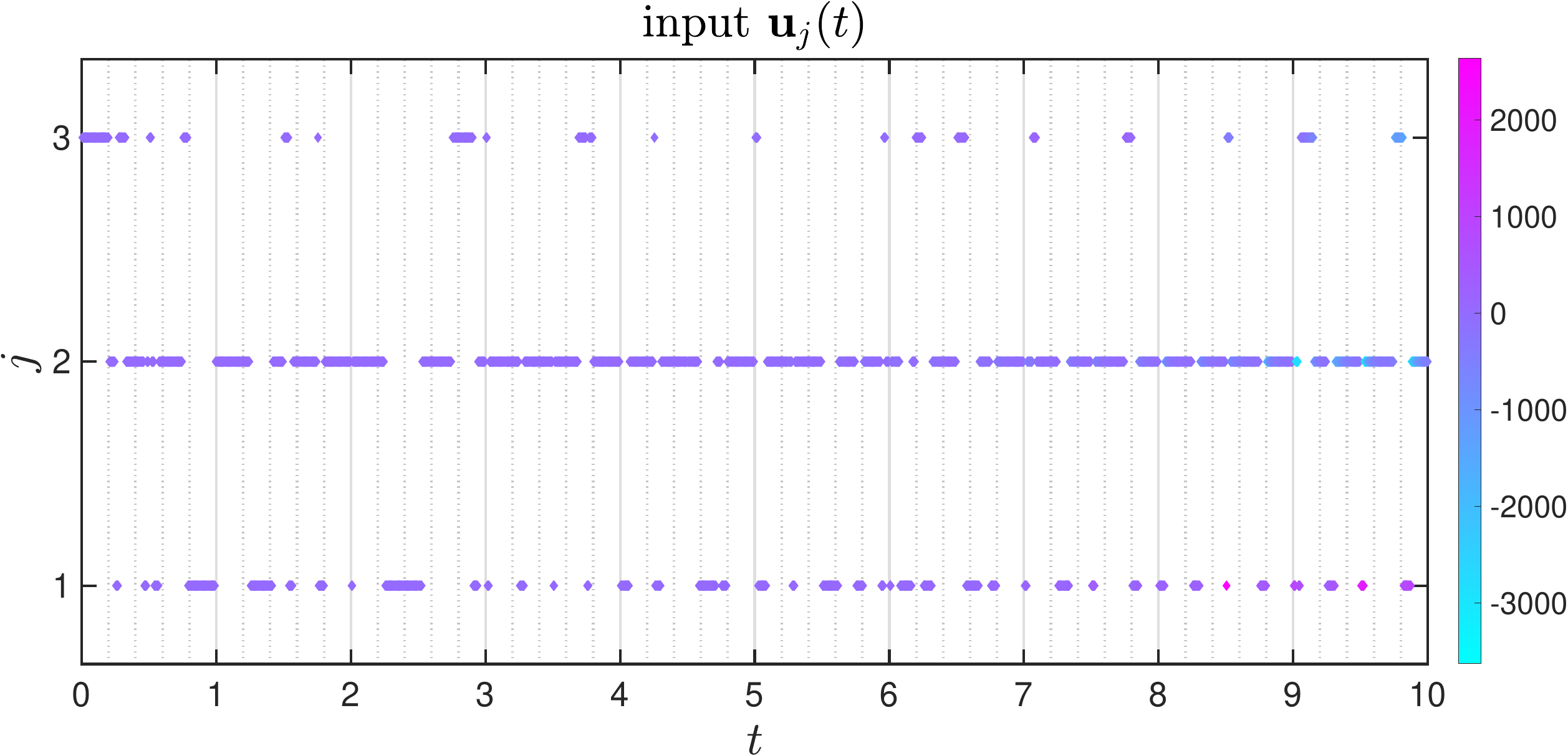}}
         \subfigure
    { \includegraphics[height=.3\textwidth,width=.9\textwidth]{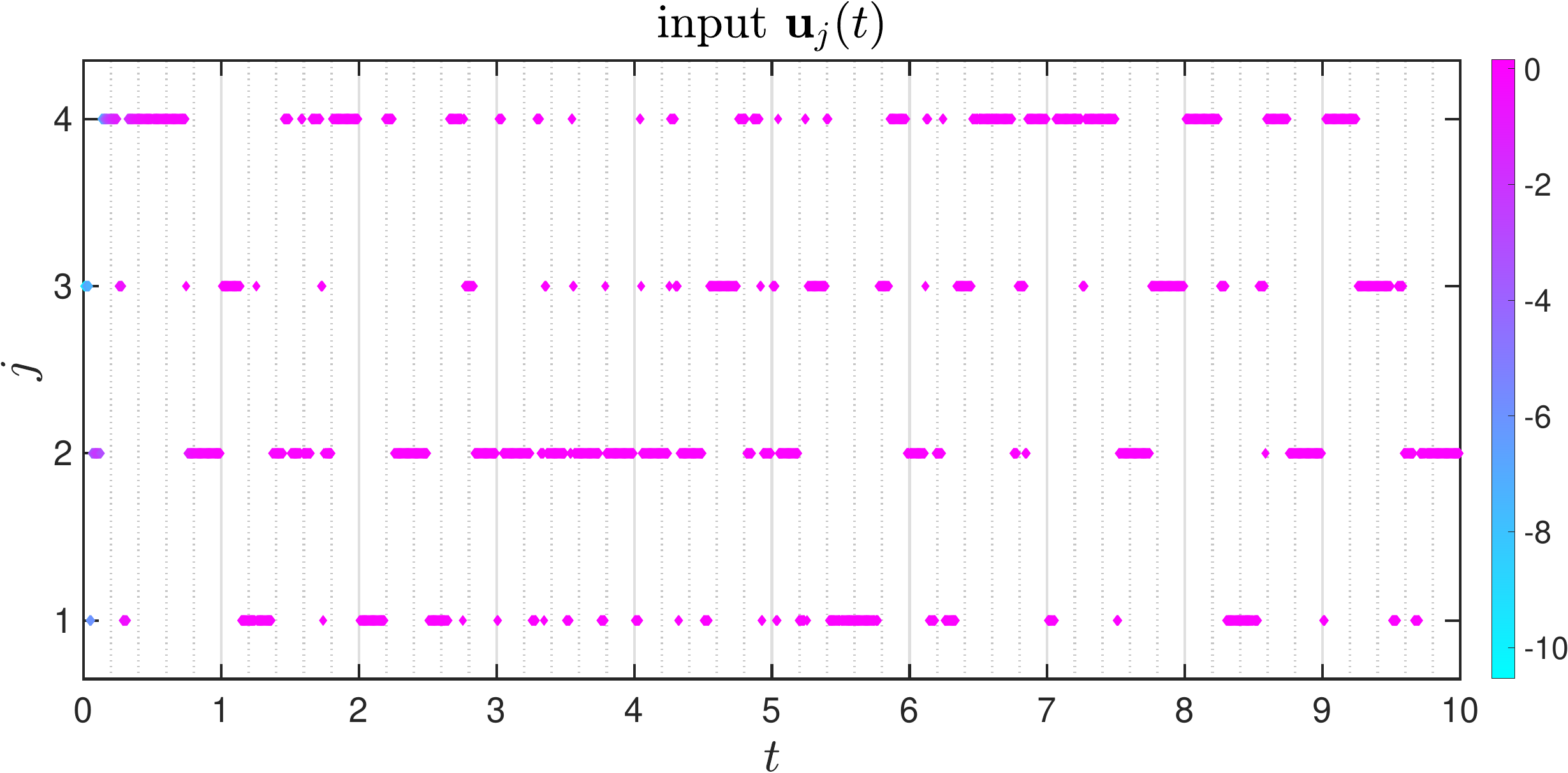}}
    \subfigure
    {\includegraphics[height=.3\textwidth,width=.9\textwidth]{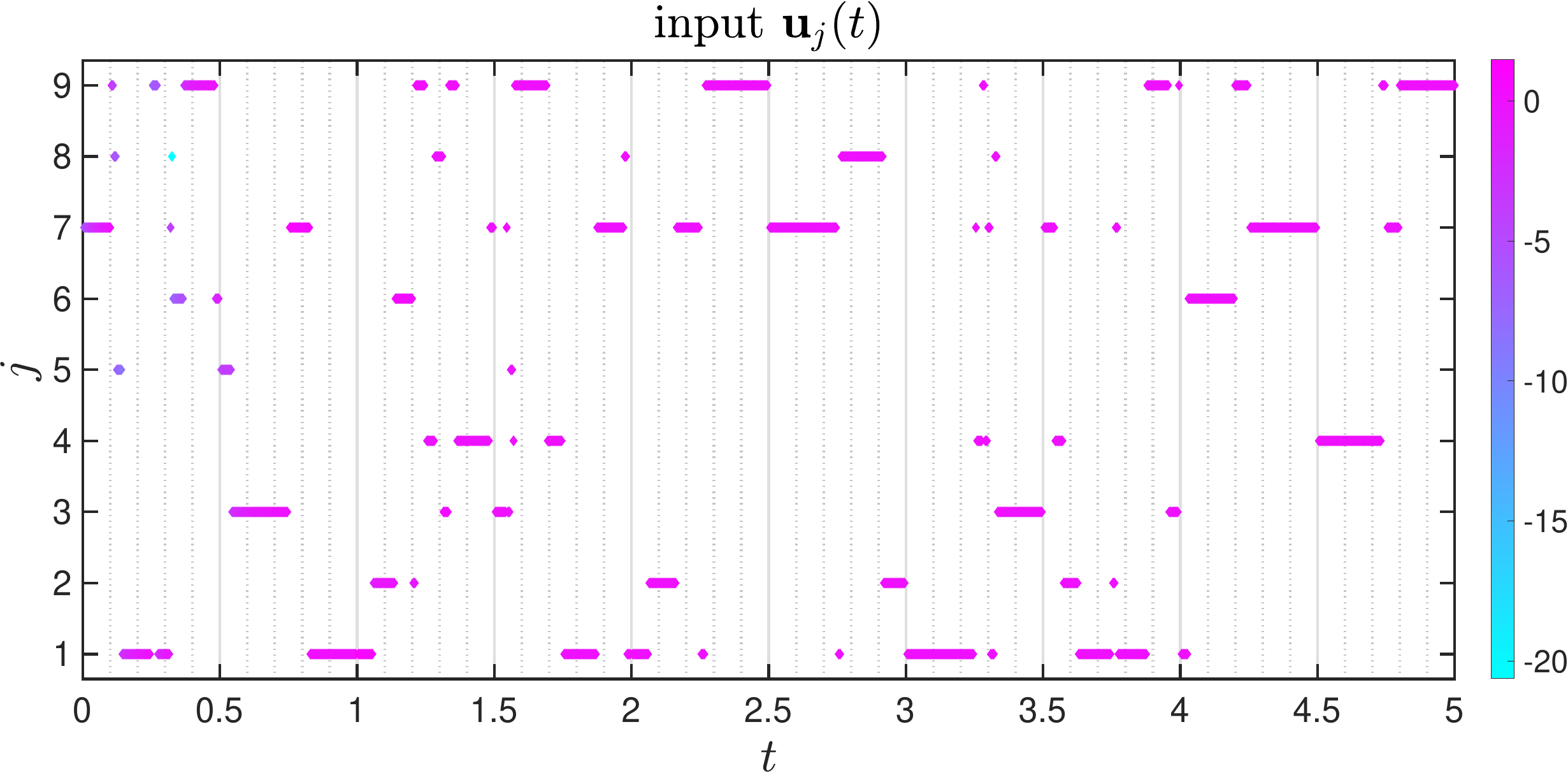}}
         \subfigure
    { \includegraphics[height=.3\textwidth,width=.9\textwidth]{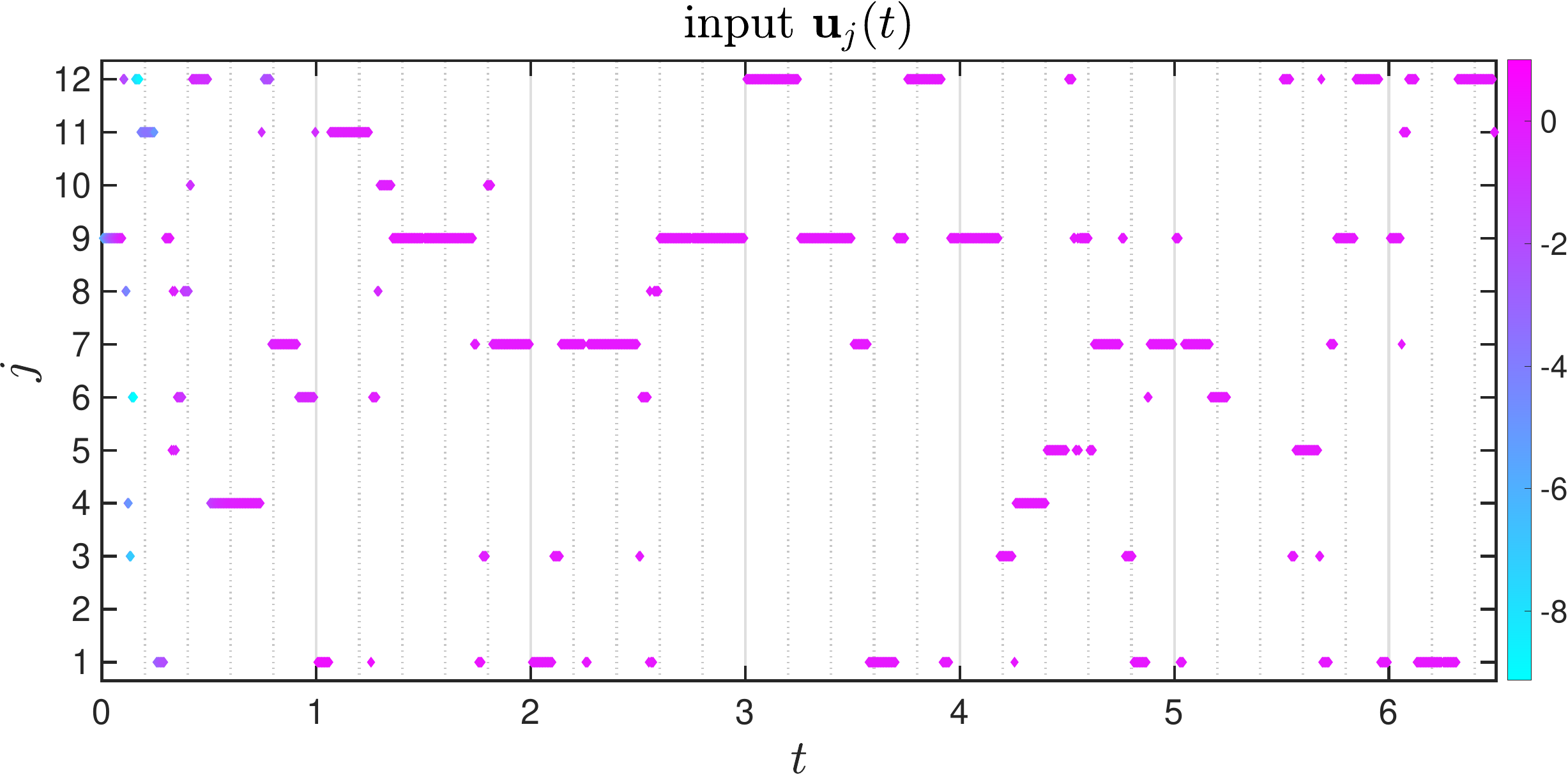}}
           \caption{Switching RHC for $M \in \{3, 4,9,12\}$ point actuators}\label{Fig:swicontrol3-12}
\end{figure}
The evolution of the magnitude~$\norm{\bfu_{\rm rh}(t)}{\ell^2}=\norm{u_{\rm rh}(t)}{\bbR}$ (see~\eqref{rhc-uc}) of the switching RHCs for cases $M \in \{ 3, 9\}$ is illustrated in Fig.~\ref{Fig:magcontrol_all}.  As we can see,  the magnitude  $\norm{u_{\rm rh}(t)}{\bbR}$ tends to infinity for the nonstabilizing case $M =3$,  whereas it converges to zero for the stabilizing case $M = 9$.  Similar convergence to zero has been observed for the cases $M \in \{ 4, 12\}$; thus, we did not illustrate these cases.
 \begin{figure}[htbp]
    \centering
            \subfigure
    {\includegraphics[height=.3\textwidth,width=.9\textwidth]{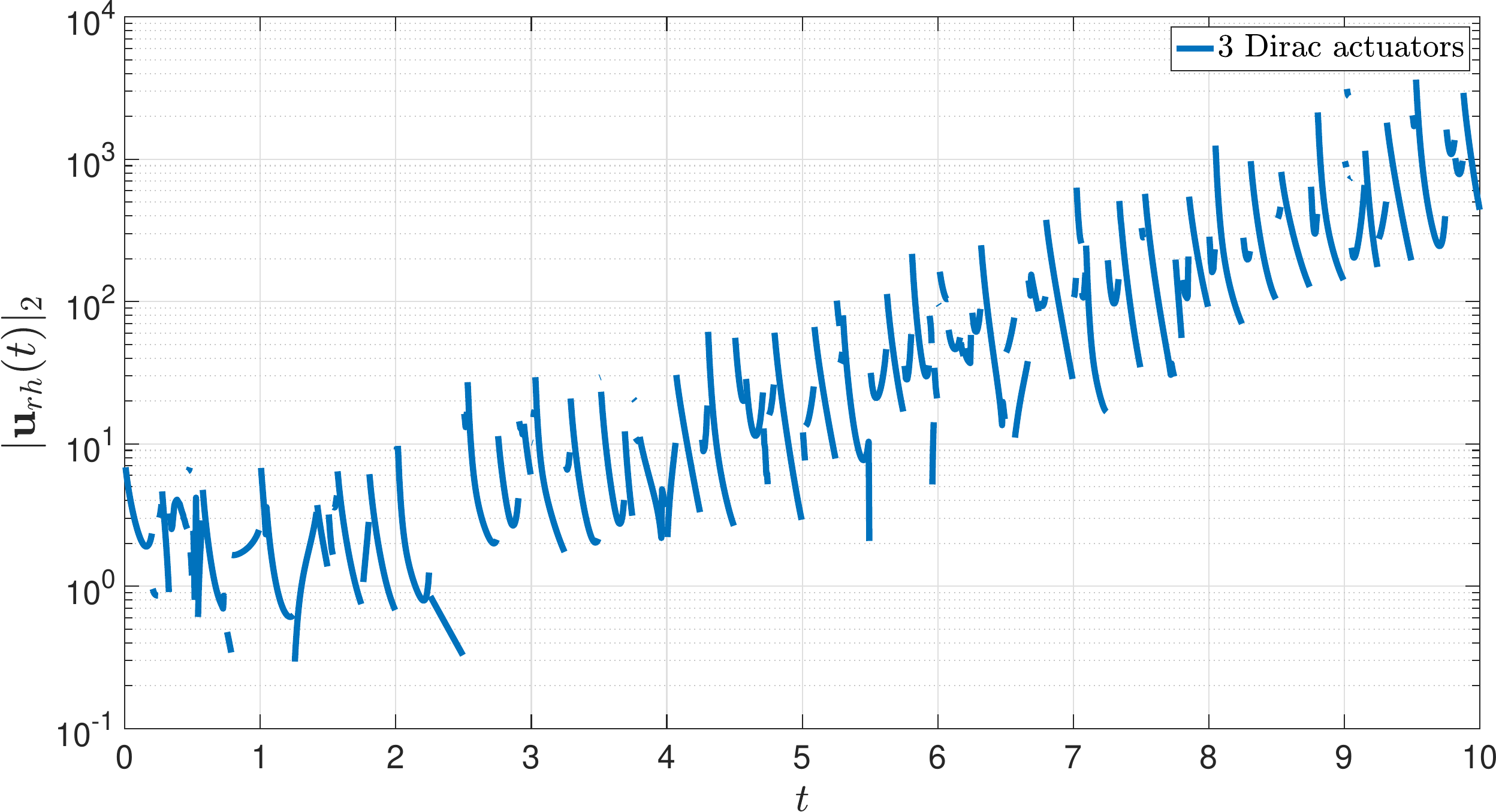} }
         \subfigure
    {\includegraphics[height=.3\textwidth,width=.9\textwidth]{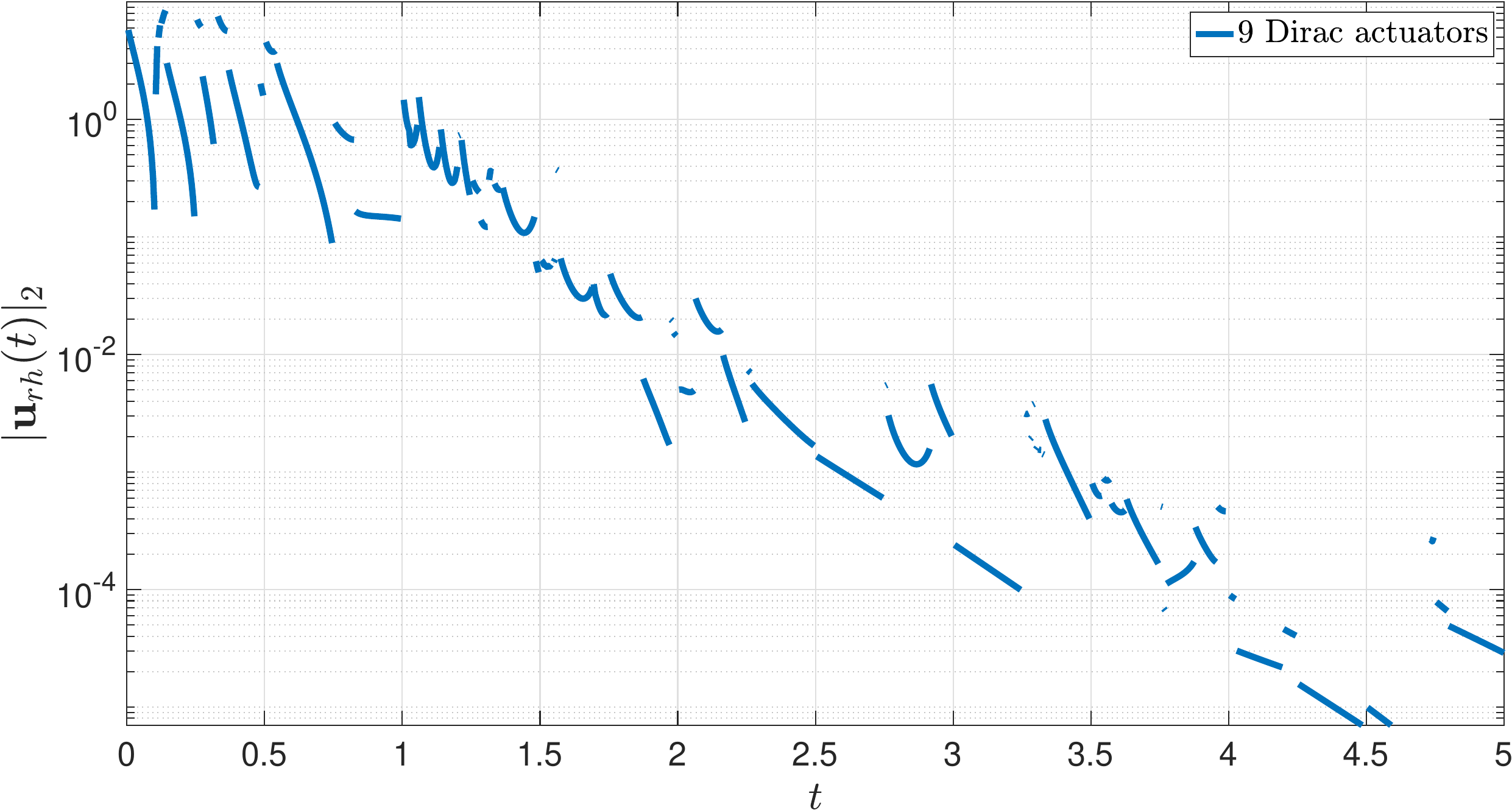} }
           \caption{Magnitude of RHC for~$M\in\{3,9\}$ point actuators}\label{Fig:magcontrol_all}
\end{figure}

Finally, in Table \ref{table1}, we report on the value of the performance index $J_{T_{\infty}}(\cdot; 0 ,y_0)$ (optimal truncated cost) and the $V'$-norm of the controlled state at the final computation time~${T_{\infty}=5}$. 
\begin{table}[htbp!]
\begin{center}
\scalebox{0.8}{
  \begin{tabular}{ | c | c | c | c | c | c |}
    \hline
     Number and placement of actuators & Uncontrolled  & $M = 3$  & $M = 4$ &$M = 9$ &$M = 12$\\
    \hline
    $  J_{T_{\infty}}(\cdot; 0,y_0) $ & $1.779 \times 10^{15}$ &$ 6.645 \times 10^{2}$ &$ 0.196$ & $0.378$& $0.260$ \\
    \hline
    $\norm{y(\cdot , T_{\infty})}{V'}$ &$1.504\times 10^{8}$ &$2.873 \times 10^{1}$&$ 0.001$  &$ 1.898\times10^{-5}$ &$1.658 \times 10^{-4}$ \\
    \hline
 \end{tabular}}
 \end{center}
 \caption{Numerical results for different placements for  $T_{\infty} = 5$ }
 \label{table1}
\end{table}
The results suggest that  tendentiously by taking a larger number of delta actuators we obtain a faster convergence to zero.  Note that this fact may depend also on  the placement of the actuators, for example, combining the results in Table~\ref{table1} and Fig.~\ref{Fig:stabil_all} we see that the case of~$M=9$ actuators seem to provide a rate of stability  similar to the case of~$M=12$ actuators; for large~$t$ the state corresponding to~$M=9$ is definitely closer to zero than the state corresponding to~$M=12$; and the cost corresponding to~$M=12$ is smaller than the one corresponding to~$M=9$. Therefore, it is debatable which among~$M\in\{9,12\}$ provides the ``best'' performance.

In conclusion, the results show that the number of actuators play crucial role in the stabilizing performance of the control, and strongly suggest that the placement of the actuators is important as well.

\subsection{On switching and nonswitching controls}
\label{exp2}
We focus on the case of~$M=4$ actuators,  placed as in Fig.~\ref{Fig:meshAct}.
The purpose of Fig.~\ref{Fig:comp-swi-nonswi} is to compare the performance of the switching RHC with the 
nonswitching analogue,  where all of the actuators can be active simultaneously.  This is motivated by the theoretical strategy where a (not necessarily optimal) switching control was constructed from a (not necessarily optimal) nonswitching one.
\begin{figure}[htbp]
    \centering
   { \includegraphics[height=0.35\textwidth,width=0.5\textwidth]{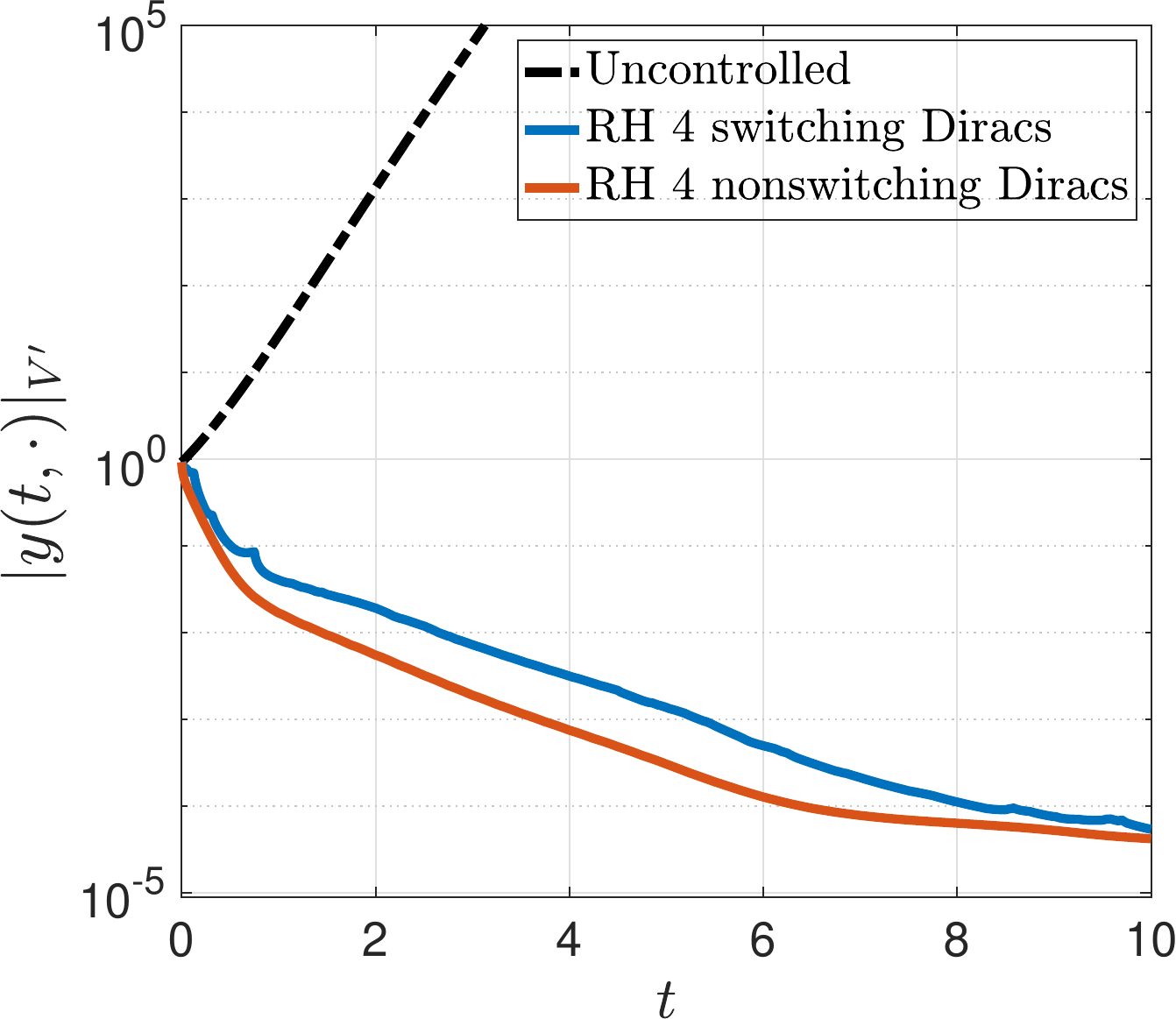} }
 \caption{Evolution of $\norm{y(t,\cdot)}{V'}$}\label{Fig:comp-swi-nonswi}
\end{figure}
{We can see that the performance of the two controls are relatively close, which suggests that the action of the  switching RHC approximates the action of the  nonswitching RHC one.} These facts are also validated by the results given in Table \ref{table2}, reporting the truncated cost and norm of the final state. Finally, to
 \begin{table}[htbp!]
\begin{center}
\scalebox{0.8}{
  \begin{tabular}{ | c | c | c | c |}
    \hline
     Number and placement of actuators & Uncontrolled  & Switching for $M = 4$  & Nonswitching for $M = 4$ \\
    \hline
    $  J_{T_{\infty}}(\cdot; 0,y_0)$  & $1.5\times 10^{32}$ &$ 0. 196 $&$ 0.099$ \\
    \hline
    $\norm{y(\cdot , T_{\infty})}{V'}$ &$4.354\times 10^{16}$ &$5.452 \times 10^{-5} $&$ 4.243 \times 10^{-5}$ \\
    \hline
 \end{tabular}}
 \end{center}
 \caption{Numerical results for $4$ actuators for $T_{\infty} = 10$}
 \label{table2}
\end{table}
gain some intuition, Fig.~\ref{Fig:timesnapshots} gathers time-snapshots of the switching RHC and corresponding state. We can see the effect on the state around the location of the active actuator, at given time instants.
\begin{figure}[htbp]
    \centering
    \subfigure[$t = 0.02$]
    {\includegraphics[height=0.21\textwidth,width=0.51\textwidth]{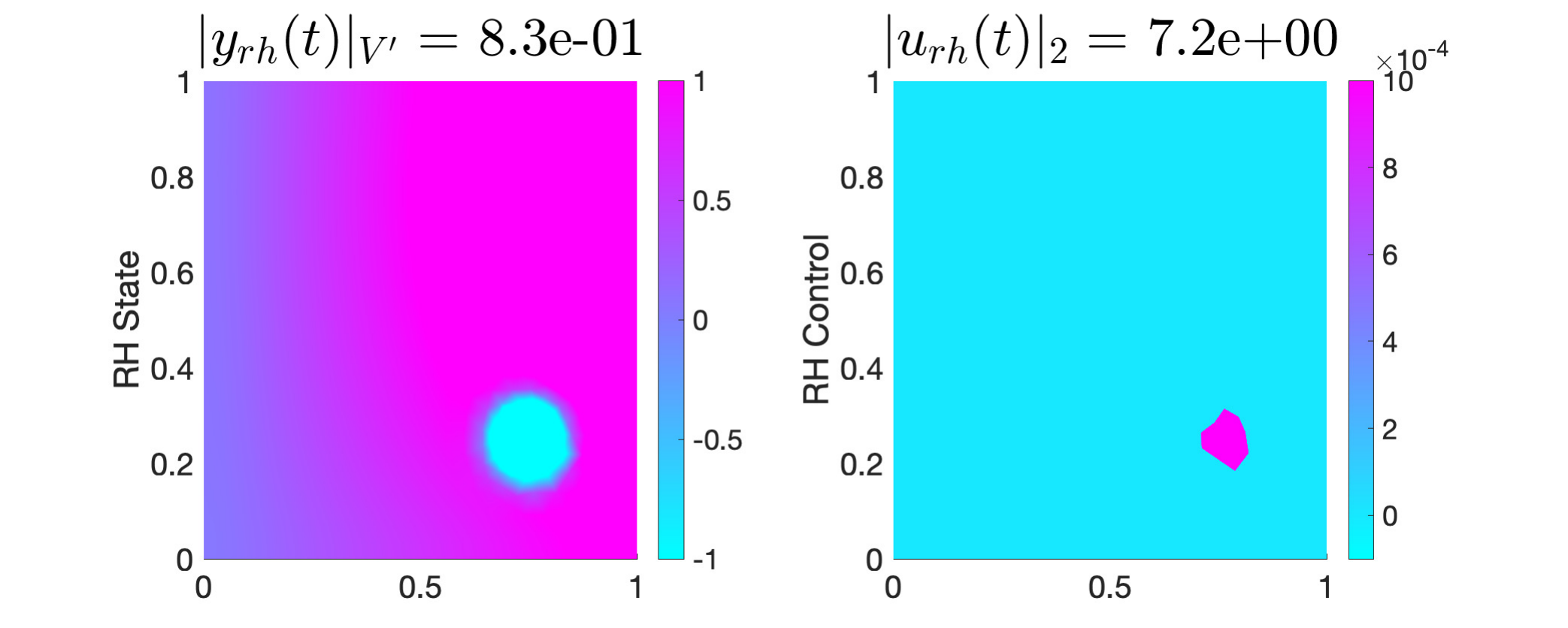}}
    \hspace{-1.5em}
    \subfigure[$t = 0.05$]
    {\includegraphics[height=0.21\textwidth,width=0.51\textwidth]{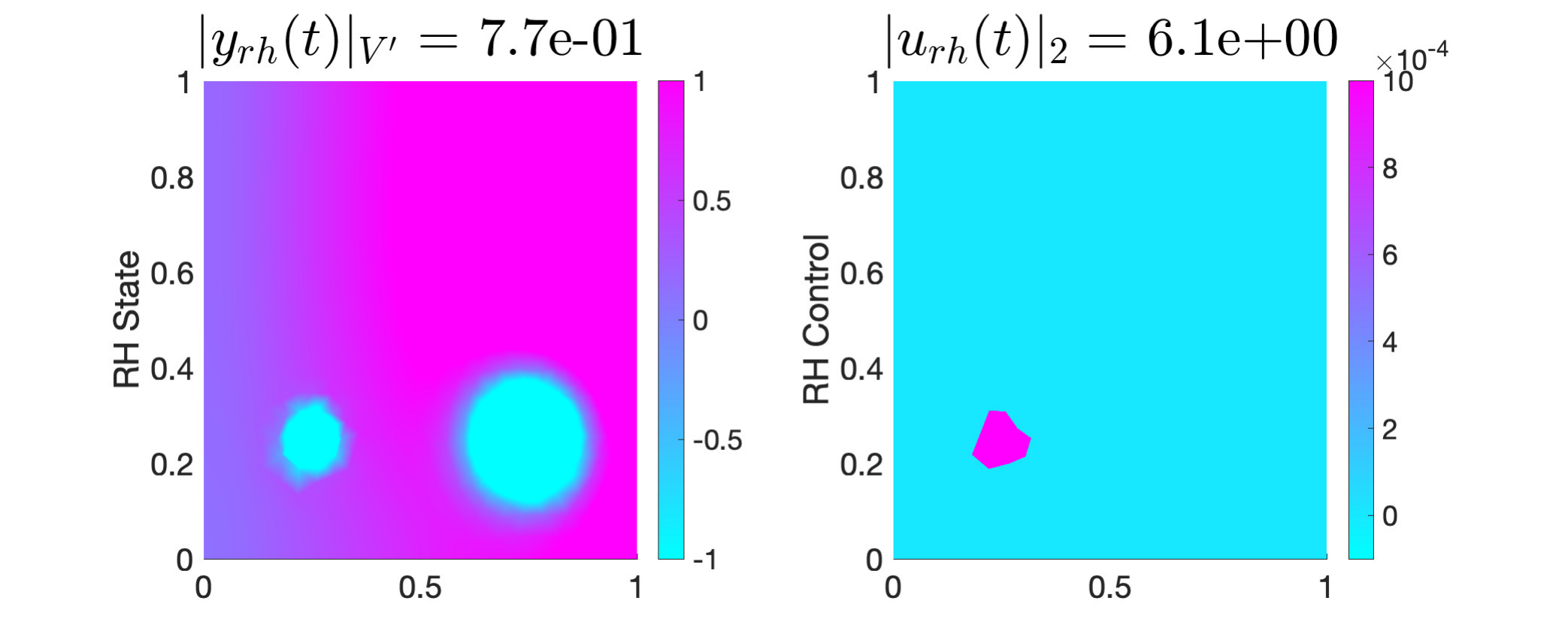}}
    \\
    \subfigure[$t = 0.2$]
    {\includegraphics[height=0.21\textwidth,width=0.51\textwidth]{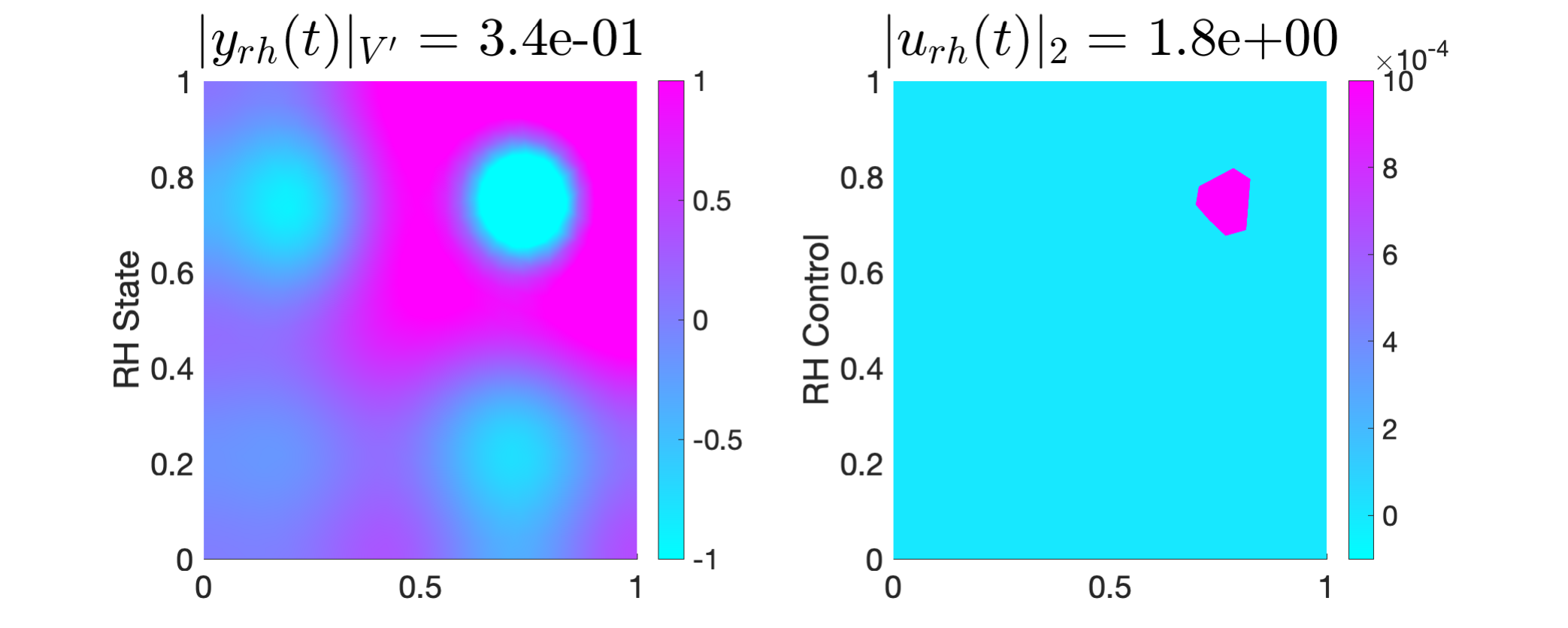}}
    \hspace{-1.5em}
     \subfigure[$t = 0.3$]
    {\includegraphics[height=0.21\textwidth,width=0.51\textwidth]{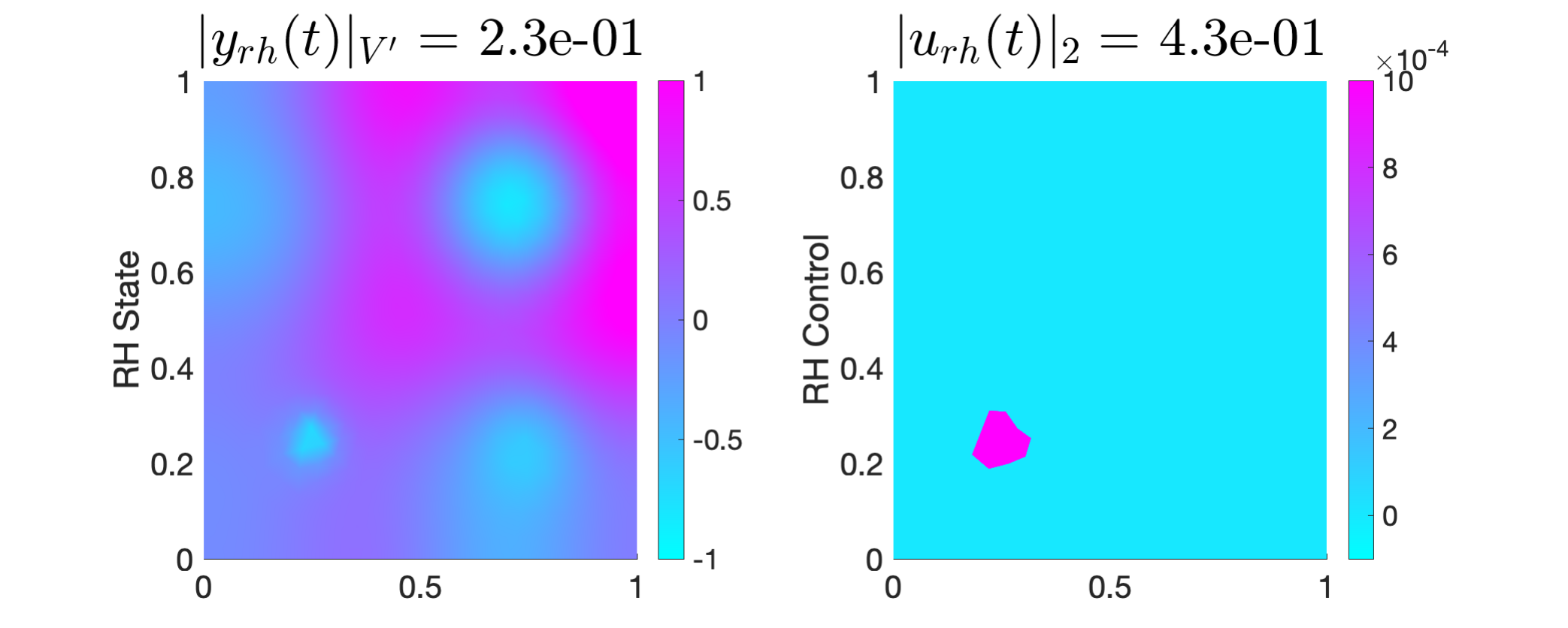}}
     \\
     \subfigure[$t = 0.35$]
    {\includegraphics[height=0.21\textwidth,width=0.51\textwidth]{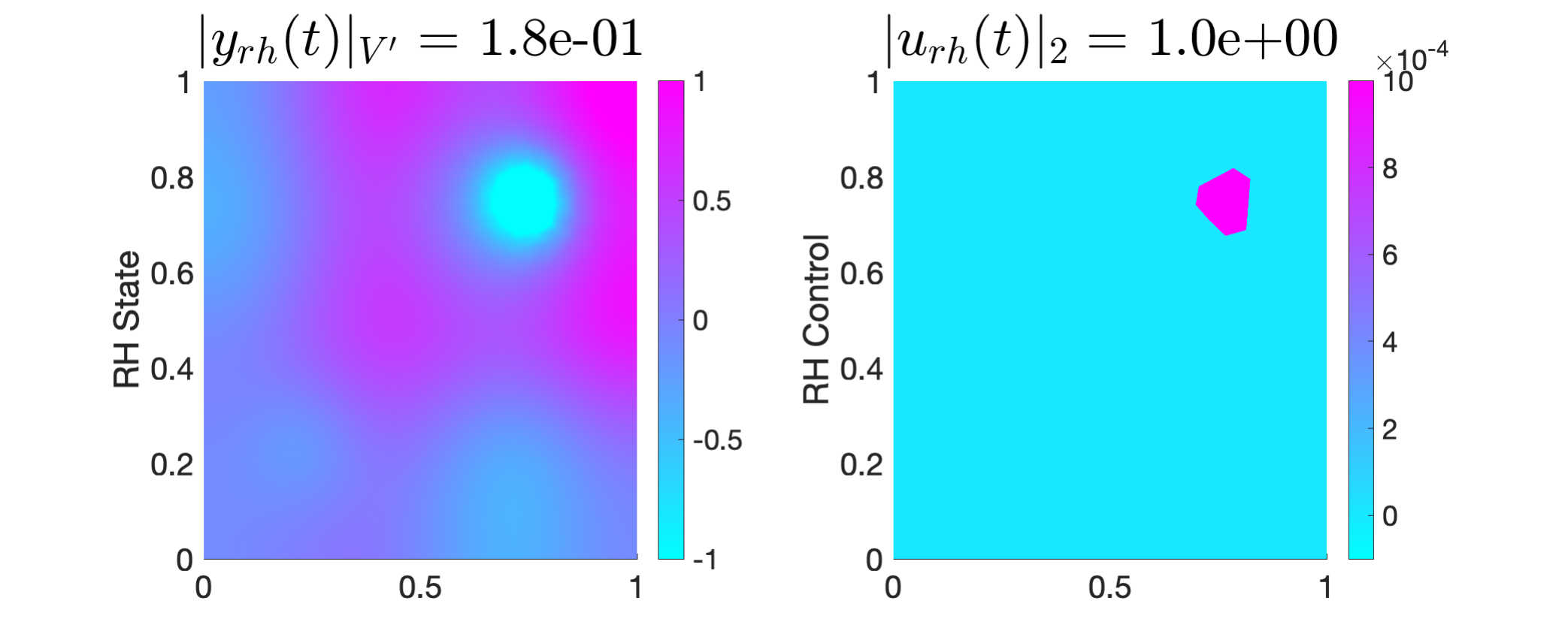}}
    \hspace{-1.5em}
    \subfigure[$t = 0.80$]
    {\includegraphics[height=0.21\textwidth,width=0.51\textwidth]{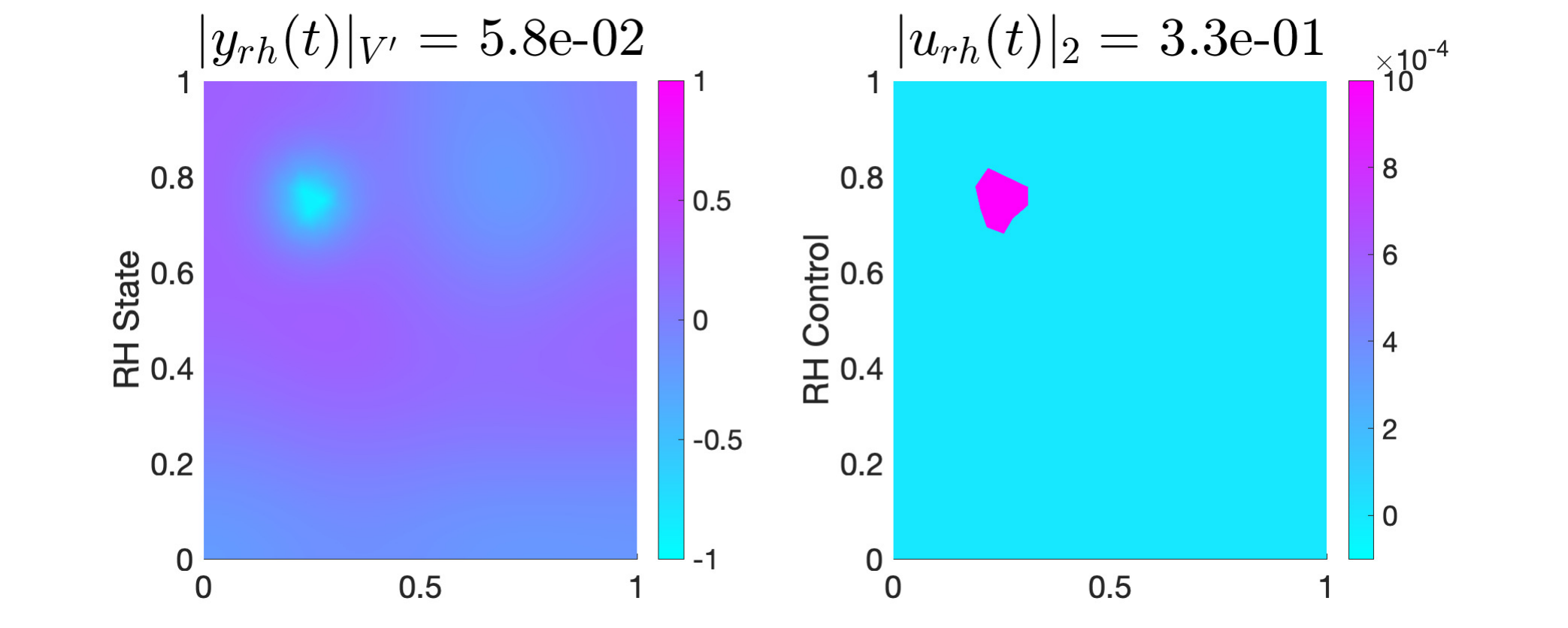}}
    \\
     \subfigure[$t = 5.20$]
    {\includegraphics[height=0.21\textwidth,width=0.51\textwidth]{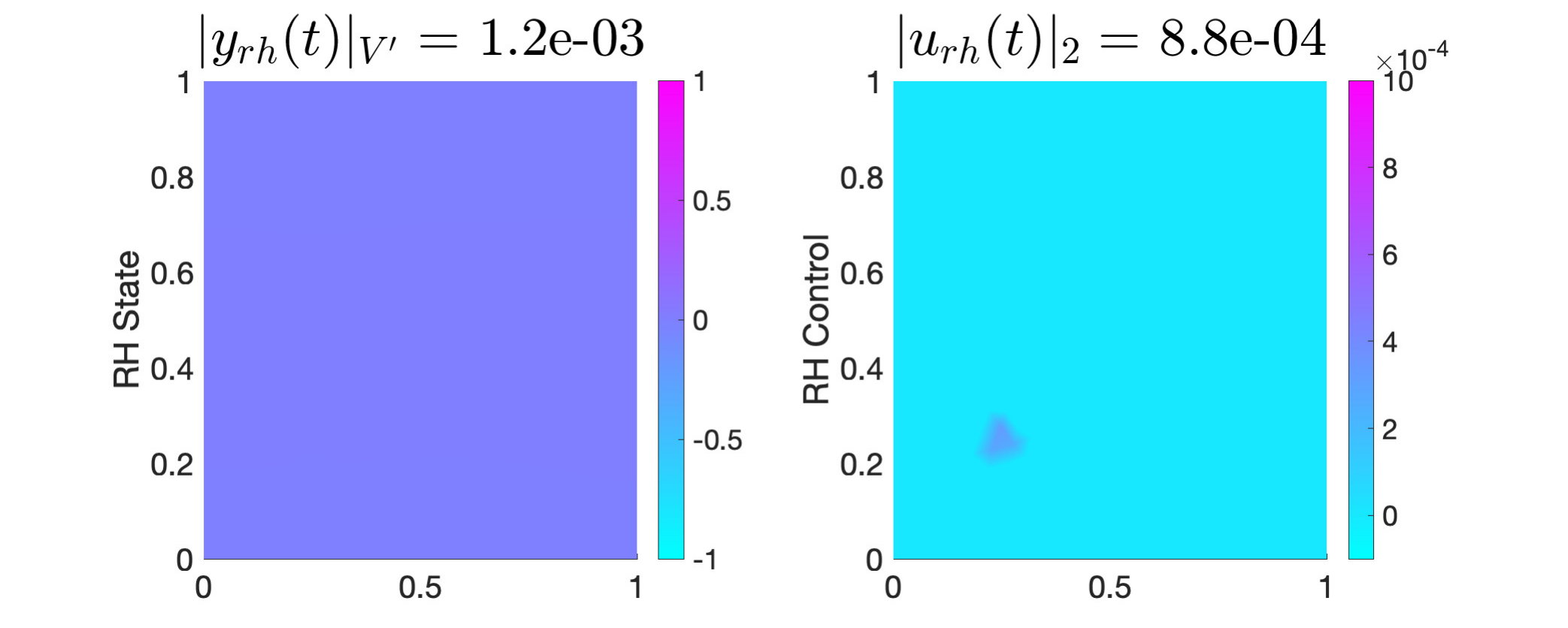}}
   \hspace{-1.5em} 
     \subfigure[$t = 9.95$]
    {\includegraphics[height=0.21\textwidth,width=0.51\textwidth]{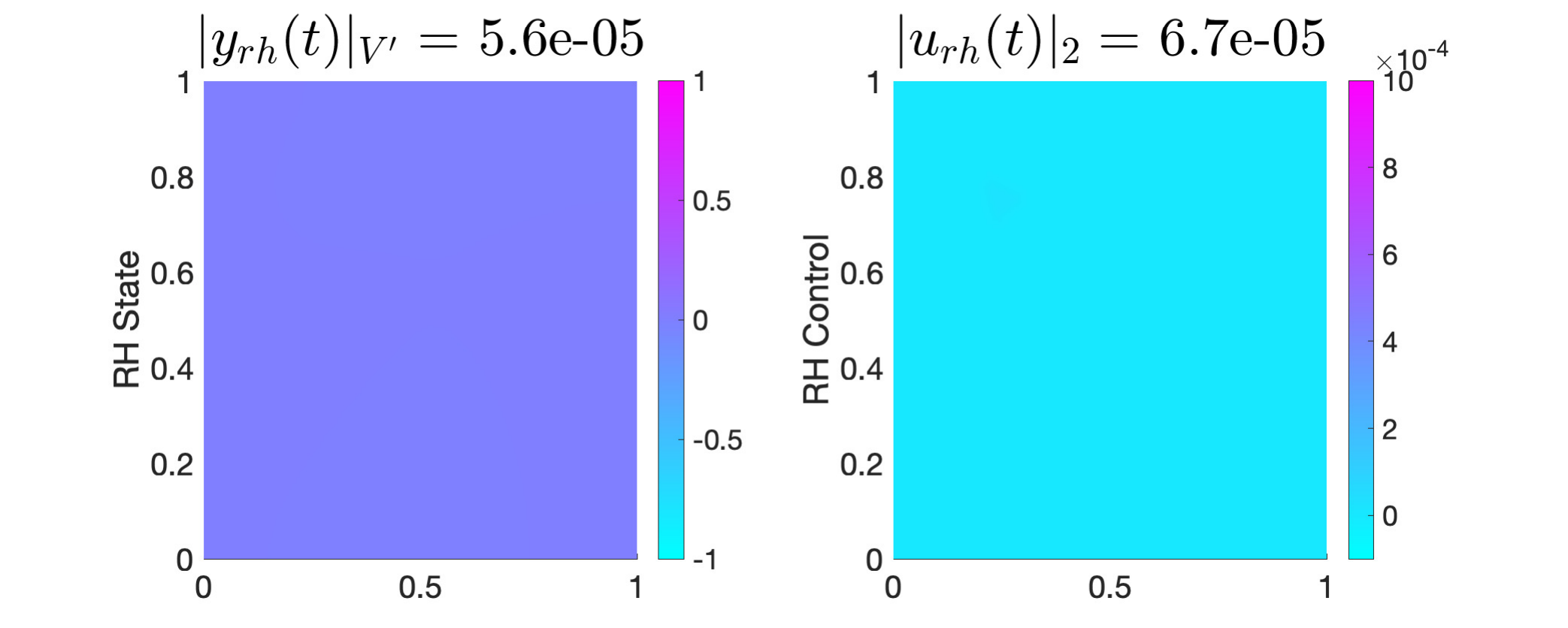}}
    \caption{Time-snapshots of the receding horizon state and control; $M=4$.}
    \label{Fig:timesnapshots}
\end{figure}

\section{Final comments}\label{S:finalcomm}
Departing from a stabilizability result in~\cite{KunRodWal24-cocv} by means of control forcings given by linear combinations of a finite number~$M$ of delta-distribution actuators~$\deltafun_{x^j}$, we have used the continuity of the solutions in a so-called relaxation metric~\cite{Gamk78} and tools from optimal control to show the existence of a stabilizing (open-loop) control switching between those actuators~$\deltafun_{x^j}$.
We analyzed the performance and computation of a receding horizon (model predictive) control which involves the admissible set of controls~$\clU_{\rm ad}^{\overline{t}_0,T}$ and employs a  cardinality constraint to guarantee that only one actuator is active at each time instant. In order to demonstrate the theoretical findings we presented results of simulations showing the stabilization performance of the computed switching RHC.

Both the number~$M$ and location set~$\bfx=\{x^j\in\Omega\mid 1\le j\le M\}$ of actuators play an important role on the stabilization performance of the switching control. The investigation of a/the {\em ``best'' location set} of actuators is an interesting subject for future work.

Other interesting subjects concern  finite horizon optimal control problems and the computational performance of the iterative procedure within Step~\ref{alg:solve} in Algorithm~\ref{RHA}; see~\eqref{proj_Ik}. Namely, can we guarantee the  {\em existence of  minimizers} for these problems? Can we improve the {\em computational performance} of the iterative procedure? If necessary, for example, by considering a different cost functional or a different  projection onto the admissible set~$\clU_{\rm ad}^{\overline{t}_0,T}$?

\bigskip\noindent
{\bf Acknowledgments.}
S. Rodrigues acknowledges partial support from State of Upper Austria and Austrian Science
Fund (FWF): P 33432-NBL.

 \bibliographystyle{plainurl}
 \bibliography{ActuatorMoving}

\end{document}